\newtheorem{theorem}{Theorem}[section]
\newtheorem{lemma}[theorem]{Lemma}
\numberwithin{equation}{section}
\newtheorem{problem}[theorem]{Problem}
\newtheorem{hypothesis}[theorem]{Hypothesis}
\newtheorem{remark}[theorem]{Remark}
\theoremstyle{definition}
\newtheorem{definition}[theorem]{Definition}
\newcommand{\A}{\mathrm A}
\newcommand{\C}{\mathrm C}
\newcommand{\D}{\mathrm D}
\newcommand{\DW}{\mathrm DW}
\newcommand{\K}{\mathrm K}
\newcommand{\rH}{\mathrm H}
\newcommand{\V}{\mathrm V}
\newcommand{\E}{\mathrm E}
\newcommand{\cC}{{\mathcal{C}}}
\newcommand{\NN}{\mathbb{N}}
\newcommand{\vC}{\vec{\C}}
\newcommand{\vGa}{\vec{\Gamma}}
\renewcommand{\wr}{\mathop{\rm wr}}
\newcommand{\ZZ}{\mathbb{Z}}
\newcommand{\Prism}{\mathrm{Pr}}
\newcommand{\Mob}{\mathrm{Mb}}
\newcommand{\Spl}{\mathrm S}
\newcommand{\Fix}{\mathrm{Fix}\,}
\newcommand{\Cay}{\mathrm{Cay}}
\newcommand{\Aut}{\mathrm{Aut}}
\newcommand{\Alt}{\mathrm{Alt}}
\newcommand{\Sym}{\mathrm{Sym}}
\newcommand{\PSU}{\operatorname{\mathrm{PSU}}}
\newcommand{\PSp}{\mathop{\mathrm{PSp}}\nolimits}
\newcommand{\POmega}{\mathop{\mathrm{P}\Omega}\nolimits}
\newcommand{\PSL}{\mathrm{PSL}}
\newcommand{\PGL}{\mathrm{PGL}}
\newcommand{\fpr}{\mathrm{fpr}}
\newcommand{\lcm}{\mathrm{lcm}}
\newcommand{\cM}{\mathcal{M}}
\newcommand{\cF}{\mathcal{F}}
\def\cent#1#2{{\bf C}_{{#1}}({{#2}})}
\def\Z#1{{\bf Z}{{(#1)}}}
\begin{document}
\title[Fixed points of automorphisms of graphs]{
On the number of fixed points of automorphisms of vertex-transitive graphs
}

\author{Primo\v{z} Poto\v{c}nik}
\address{Faculty of Mathematics and Physics, University of Ljubljana, Jadranska 21, SI-1000 Ljubljana, Slovenia\\
 also affiliated with 
Institute of Mathematics, Physics and
  Mechanics, Jadranska 19, SI-1000 Ljubljana, Slovenia}
 \email{primoz.potocnik@fmf.uni-lj.si}

\author{Pablo Spiga}
\address{Dipartimento di Matematica e Applicazioni, University of Milano-Bicocca, Via Cozzi 55, 20125 Milano, Italy} 
\email{pablo.spiga@unimib.it}
\begin{abstract}
The main result of this paper is that, if $\Gamma$ is a finite connected
$4$-valent vertex- and edge-transitive graph, then
either $\Gamma$ is part of a well-understood family of graphs,
or every non-identity automorphism of $\Gamma$ fixes at most $1/3$ of the vertices. As a corollary, we get a similar result for
$3$-valent vertex-transitive graphs. 
\end{abstract}
\subjclass[2010]{05C25, 20B25}

\thanks{The first-named author gratefully acknowledges the support of the Slovenian Research Agency ARRS, core funding programme P1-0294 and research project J1-1691.}

\keywords{Valency 3, Valency 4, Vertex-transitive, Arc-transitive, fixed-points}
\maketitle

\section{Introduction}\label{sec:intro}

The aim of this paper is to study a graph-theoretical parameter called {\em fixicity},
 defined as the maximal number of vertices that are fixed by a non-trivial automorphism
of the graph. Investigation of a group theoretical analogue of this parameter (the maximum number of points fixed by a permutation group) has a long history
going back to a classical work of Jordan studying primitive permutation group
containing a non-trivial permutation fixing all but a prescribed number of points.
His results were later improved significantly by several authors: for example, Babai \cite{Bab},
Liebeck and Saxl \cite{LS}, Guralnick and Magaard \cite{GurMag}, Burnes \cite{Tim1,Tim4}, Liebeck and Shalev \cite{LS}, to name a few.
 As a result, all primitive groups $G$ having a non-trivial permutation fixing more than half of the points are known.

To the best of our knowledge, the fixicity of a graph was first studied by Babai 
  \cite{Babai1,Babai2} and was motivated by the famous graph isomorphism problem \cite{Babai3}.
   In these papers,
  Babai shows how fixicity is related to a number of important notions, such as the spectrum of the graph, the order of individual automorphisms and the automorphism group of the graph. While the focus there are strongly regular graphs (that can be though of as graphs are highly symmetrical through from a purely combinatorial point of view)
this paper is devoted to the fixicity of graphs exhibiting a high level of symmetry as measured
through their automorphism groups. In particular, we will be interested in connected graphs of valence at most $4$ admitting a group of automorphisms $G$ acting transitively on the vertices ($G$-vertex-transitive graphs),
edges ($G$-edge-transitive graphs) and/or ordered pairs of adjacent vertices 
($G$-arc-transitive graphs).

Our understanding of vertex-transitive graphs 
 is a function of time. The fact that this function has increased so much recently (especially for graphs of valency $3$ and $4$) is, in our opinion, due to two processes intimately intertwined. On the one hand, theoretical results allow us to get deeper into the structure (both combinatorial and algebraic) of vertex-transitive graphs. These results can often be used to improve our  database of vertex-transitive graphs, see~\cite{condercensus,conder,census1,census2,census3}. On the other hand, these databases can be used to test open problems or to formulate conjectures; see for example \cite{ConVer,PotVid,AlePri}. The  spin off of this process is more theoretical work. And the loop starts again, if one can really say that there is a ``start" and an ``end" in this process.

The pattern described in this paper starts with some computer evidence, found by Gabriel Verret and the first-named author of this paper. By checking the census of connected $3$-valent vertex-transitive  graphs~\cite{census1,census2} (which was obtained from the theoretical work in \cite{PSV}), they observed that (for graphs small enough to be in this list) non-identity automorphisms of a connected $3$-valent vertex-transitive graph $\Gamma$ cannot fix more than $1/3$ of the  vertices of $\Gamma$, unless $\Gamma$ is in a very special family or very small. A similar pattern holds for the family of connected $4$-valent vertex- and edge-transitive graphs.

Our main results are the following. For not breaking the flow of the argument, we refer the reader to Sections~\ref{sub1} and~\ref{sub2} for undefined terminology, including the definition of the Praeger-Xu graphs $\C(r,s)$ and the Split Praeger-Xu graphs $\Spl(\C(r,s))$.

\begin{theorem}
\label{thrm:1}
Let $\Gamma$ be a finite connected edge- and vertex-transitive $4$-valent graph
admitting a non-identity automorphism fixing more than $1/3$ of the vertices. 
Then $\Gamma$ is arc-transitive and one of the following holds:
\begin{description}
\item[(i)] $|\V\Gamma| \le 70$ and $\Gamma$ is one of the six exceptions $\Psi_1, \ldots, \Psi_6$, defined in Section~$\ref{sub1}$;
\item[(ii)]$\Gamma$ is isomorphic to a Praeger-Xu graph $\C(r,s)$ with $1\le s< 2r/3$ and $r\ge 3$.
\end{description} 
\end{theorem}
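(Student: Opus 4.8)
\emph{Step 1 (local analysis).} Our plan is to analyse the action of a putative ``large'' automorphism near the boundary of its fixed-point set, and then combine this with the structure theory of $4$-valent vertex- and edge-transitive graphs recalled in Sections~\ref{sub1} and~\ref{sub2} to reduce $\Gamma$ to a Praeger--Xu-type graph or a small exception. Set $A=\Aut\Gamma$ and pick a non-identity $g\in A$ with $|\Fix(g)|>|\V\Gamma|/3$; replacing $g$ by a suitable power we may assume $g$ has prime order $p$. Put $F=\Fix(g)$. Since $\Gamma$ is connected and $F\neq\V\Gamma$, some $v\in F$ has a neighbour outside $F$, and $g$ induces a non-identity permutation of the four neighbours of $v$ fixing those that lie in $F$. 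A non-identity element of a transitive subgroup of $\Sym(4)$ fixes at most two points, so every such ``boundary'' vertex of $F$ has at least two neighbours outside $F$; moreover the cycle type of $g$ on $\Gamma(v)$ is that of a transposition, a double transposition, or a $3$-cycle, so $p\in\{2,3\}$ and the local group $A_v^{\Gamma(v)}$ is severely restricted (two points are fixed only if $A_v^{\Gamma(v)}$ contains a transposition, i.e. equals $D_4$ or $S_4$). When $A_v^{\Gamma(v)}$ is $2$-transitive, $|A_v|$ is bounded by an absolute constant, so the pointwise stabiliser of a ball of bounded radius is trivial, $F$ contains no large ball, and a count of the edges between $F$ and $\V\Gamma\setminus F$ leaves only graphs of bounded order, handled in Step 3.

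\emph{Step 2 (reduction to Praeger--Xu-type graphs).} Comparing $\sum_{h\in g^{A}}|\Fix(h)|\ge|g^{A}|\cdot|\V\Gamma|/3$ with the identity $\sum_{h\in g^{A}}|\Fix(h)|=|\V\Gamma|\cdot|g^{A}\cap A_v|\le|\V\Gamma|\,(|A_v|-1)$ shows that $g$ has few conjugates, hence that $C:=C_A(g)$ is large. As $\langle g\rangle$ is central in $C$, the element $g$ is either trivial or semiregular on each $C$-orbit, so $F$ is a union of $C$-orbits and $g$ is a non-identity automorphism whose support, contained in $\V\Gamma\setminus F$, is small. Now I would apply the normal-quotient method of Praeger together with the Praeger--Xu classification recalled in Section~\ref{sub2}: a $4$-valent vertex- and edge-transitive graph admitting such an automorphism --- equivalently, whose edge-transitive group has a normal subgroup with large vertex-orbits --- is either arc-transitive and isomorphic to a Praeger--Xu graph $\C(r,s)$, or a Split Praeger--Xu graph $\Spl(\C(r,s))$, or one of finitely many graphs of bounded order.

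\emph{Step 3 (Praeger--Xu graphs and the small cases).} For $\Gamma\cong\C(r,s)$ one uses the description of $\C(r,s)$ over the cycle $C_r$ to exhibit an automorphism acting trivially off a short arc and fixing a $\bigl(1-\Theta(s/r)\bigr)$-fraction of the vertices, and proves a matching upper bound; this gives that $\Gamma$ has a non-identity automorphism fixing more than a third of its vertices exactly when $1\le s<2r/3$ and $r\ge3$, which is case~(ii). The analogous computation for $\Spl(\C(r,s))$ yields a fixed-point ratio never exceeding $1/3$, so these graphs --- and, for graphs of large order, also the $2$-transitive case of Step 1 --- are excluded; hence $\Gamma$ must be arc-transitive. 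Finally the finitely many graphs of bounded order are checked against the censuses of $4$-valent arc-transitive graphs, and exactly six of them, namely $\Psi_1,\dots,\Psi_6$ (all on at most $70$ vertices), admit a non-identity automorphism fixing more than a third of the vertices, which is case~(i).

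\emph{Main obstacle.} I expect the real difficulty to be twofold. First, computing the fixed-point ratio of the Praeger--Xu graphs precisely enough for the threshold to come out as $2r/3$ rather than some nearby quantity requires a firm grip on $\Aut(\C(r,s))$ and on which of its elements have large support. Second, one must make the ``bounded order'' alternative genuinely effective --- either by sharpening the structural bounds to bound $|\V\Gamma|$ outright, or by combining partial bounds with an exhaustive census check --- and then verify that precisely the six graphs $\Psi_1,\dots,\Psi_6$ survive. Arranging the local/counting analysis and the structural classification to interlock, so that every $4$-valent vertex- and edge-transitive graph provably falls below the threshold, is a Praeger--Xu graph in the stated range, or is one of the six exceptions, is where the bulk of the work lies.
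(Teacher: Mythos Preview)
Your outline has the right large-scale shape---separate the $2$-arc-transitive case from the rest, invoke the Praeger--Xu characterisation, and mop up small graphs by census---but two of the load-bearing steps do not work as stated, and the actual proof proceeds quite differently.

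\textbf{Step 1 does not bound the order.} Knowing that $|A_v|$ is bounded and that $F$ contains no large ball does \emph{not} force $|\V\Gamma|$ to be bounded: the fixed set can be long and thin. Indeed, in $\C(r,s)$ with $s$ fixed, the element $\tau_i$ fixes $(r-s)2^s$ vertices, which is unbounded as $r\to\infty$, and the local picture at a boundary vertex is exactly the transposition case you describe. So no edge count between $F$ and its complement will close this case. The paper handles the $2$-arc-transitive case by an entirely different route: one shows (Lemma~\ref{lem:biquasi}) that a minimal $2$-arc-transitive $G\ni g$ has a \emph{unique} minimal normal subgroup $N\cong T^\ell$ with $T$ non-abelian simple and at most two vertex-orbits, bounds $|N:\cent N g|$ via Lemma~\ref{eq:22} by $3$ times the number of involutions (or order-$3$ elements) in $G_v$, and then runs a CFSG-based case analysis on $T$ and $\ell$ (Table~\ref{table:1}) together with computer checks to pin down the six exceptions $\Psi_1,\ldots,\Psi_6$.

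\textbf{Step 2 does not reach the Praeger--Xu characterisation.} The centraliser $C=C_A(g)$ is neither normal in $A$ nor abelian in general, so Lemma~\ref{cor:3.4} (which needs an \emph{abelian normal} subgroup failing to be semiregular) does not apply to it, and ``normal-quotient method'' has nothing to quotient by. The paper's argument in the non-$2$-arc-transitive case is: since $G_v$ is a $2$-group, Theorem~\ref{proposition:7} (a CFSG-dependent result proved elsewhere) forces $O_2(G)\ne 1$; take a minimal normal $2$-subgroup $N$ and pass to $\Gamma/N$, which by induction is $\C(r,s)$. The crux is then Lemma~\ref{lem:vCcov}: when $\Gamma/N\cong\C(r,s)$ one must show $\Gamma$ itself is Praeger--Xu, and this requires a delicate analysis of $E=\langle G_u:u\in\V\Gamma\rangle$, the identification $[E,E]=\Phi(E)=N$, and a ``red/pink/blue strip'' argument on the $E$-orbits showing that otherwise $\fpr_{\V\Gamma}(g)\le 1/4$. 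None of this is visible from the centraliser of $g$ alone.

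A minor point: Split Praeger--Xu graphs $\Spl(\C(r,s))$ are $3$-valent, so they never arise in Theorem~\ref{thrm:1}; your Step~2 and Step~3 conflate the two theorems. The arc-transitivity conclusion in Theorem~\ref{thrm:1} comes instead from the fact that even when $G$ acts only $\tfrac12$-arc-transitively, the analysis (Theorem~\ref{the:GHAT}) still forces $\Gamma\cong\C(r,s)$, whose full automorphism group is arc-transitive.
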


\begin{theorem}
\label{thrm:2}
Let $\Gamma$ be a finite connected $3$-valent vertex-transitive graph admitting a non-identity automorphism fixing more than $1/3$ of the vertices. Then one of the following holds:
\begin{description}
\item[(i)] $|\V\Gamma| \le 20$ and $\Gamma$ is one of the six exceptions $\Lambda_1, \ldots, \Lambda_6$, defined in Section~$\ref{sub1}$;
\item[(ii)]$\Gamma$ is isomorphic to a Split Praeger-Xu graph $\Spl(\C(r,s))$  with $1\le s< 2r/3$ and $r\ge 3$.
\end{description} 
\end{theorem}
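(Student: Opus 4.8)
The plan is to derive Theorem~\ref{thrm:2} from Theorem~\ref{thrm:1} by attaching to the cubic graph $\Gamma$ a suitable $4$-valent graph. Write $G=\Aut(\Gamma)$ and $n=|\V\Gamma|$, let $g\in G$ be non-identity with $\fix(g)>n/3$, and (as $\fix(g)>0$) fix a vertex $v\in\Fix g$, so $g\in G_v$. The stabiliser $G_v$ permutes the three edges at $v$ with orbit lengths one of $\{3\}$, $\{2,1\}$, $\{1,1,1\}$. If the pattern is $\{1,1,1\}$ then $G_v$ fixes every neighbour of $v$, hence is trivial by connectedness, so $G$ is regular on $\V\Gamma$ and no non-identity automorphism fixes a vertex --- contradiction; this case is vacuous. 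If the pattern is $\{3\}$ then $\Gamma$ is arc-transitive. If the pattern is $\{2,1\}$ then the two $G$-orbits on $\E\Gamma$ are exactly the $G_v$-orbit through the fixed edge and its complement, so the $G$-orbit $M$ of the $G_v$-fixed edge meets every vertex in a single edge, i.e.\ $M$ is a perfect matching and $\E\Gamma\setminus M$ is a single $G$-orbit of size $n$.

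In the matching case, contract $M$: the quotient $\Delta:=\Gamma/M$ is a connected $4$-valent graph on $n/2$ vertices, and $G$ acts on it transitively on vertices (being transitive on $M$) and on edges (being transitive on $\E\Gamma\setminus M$), so $\Delta$ is a $4$-valent edge- and vertex-transitive graph, with $\Gamma=\Spl(\Delta)$ for the $G$-invariant pairing of the four edges at each vertex of $\Delta$ induced by the contraction. Because $G_v$ fixes the $M$-edge at each $v\in\Fix g$, the $M$-mate of every $g$-fixed vertex is $g$-fixed; thus $\Fix g$ is a union of $M$-edges and the induced map $\bar g$ on $\Delta$ fixes $\tfrac12\fix(g)>\tfrac13|\V\Delta|$ vertices. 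Provided $\bar g\neq1$, Theorem~\ref{thrm:1} applies to $\Delta$: either $\Delta\in\{\Psi_1,\dots,\Psi_6\}$, whence $n=2|\V\Delta|\le140$ and $\Gamma=\Spl(\Delta)$ lies in an explicit finite list that one checks reduces to $\Lambda_1,\dots,\Lambda_6$, or $\Delta\cong\C(r,s)$ with $1\le s<2r/3$ and $r\ge3$, whence $\Gamma\cong\Spl(\C(r,s))$ --- here one uses that for $r\ge3$ the graph $\C(r,s)$ admits a unique $\Aut(\C(r,s))$-invariant pairing of the edges at each vertex, so that the splitting obtained above is the one defining $\Spl(\C(r,s))$. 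The remaining possibility $\bar g=1$ puts $g$ in the kernel of $G\to\Aut(\Delta)$, an elementary abelian $2$-group acting by flipping a set of $M$-edges, and must be handled directly, showing that $\fix(g)>n/3$ then forces $\Gamma$ small or again a split Praeger-Xu graph.

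There remains the arc-transitive case, which is where the real difficulty lies: a cubic arc-transitive graph has no $\Aut$-invariant perfect matching in general, and passing to the line graph $L(\Gamma)$ --- which is $4$-valent and arc-transitive, hence covered by Theorem~\ref{thrm:1} --- loses fixed points, since the lift $\hat g$ of $g$ fixes precisely the edges of $\Gamma$ contained in $\Fix g$ or reversed by $g$, a number that may fail to exceed $|\V(L(\Gamma))|/3$. The plan here is to use Tutte's bound $|G_v|\le48$ (so that, replacing $g$ by a power, one may assume $g$ has order $2$ or $3$) together with the local structure of $\Fix g$ --- every vertex of $\Gamma[\Fix g]$ has degree $0$, $1$ or $3$, and when $g$ has order $3$ the set $\Fix g$ is independent with each of its vertices having its three neighbours forming one $\langle g\rangle$-orbit --- to show that $\fix(g)>n/3$ bounds $n$, apart from graphs with ``twin'' vertices, which form a short list of small cubic arc-transitive graphs; the census of small cubic arc-transitive graphs then isolates $\Lambda_1,\dots,\Lambda_6$. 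I expect the arc-transitive branch, and the finite checking confirming that all small exceptions produced along the way are exactly the six graphs $\Lambda_1,\dots,\Lambda_6$, to be the principal obstacle.
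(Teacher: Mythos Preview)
Your treatment of the non-arc-transitive case is essentially the paper's approach: contract the canonical perfect matching, land in a $4$-valent edge- and vertex-transitive graph, and invoke Theorem~\ref{thrm:1}. Two points you gloss over are handled explicitly in the paper. First, the merge $\Delta$ is $4$-valent (and simple) only once prisms and M\"{o}bius ladders are excluded; the paper disposes of these directly by observing that a non-trivial automorphism of $\Prism_n$ or $\Mob_n$ fixes at most four vertices. Second, your worry about $\bar g=1$ is moot: once prisms and M\"{o}bius ladders are out, the action of $G$ on $\V\Delta$ is faithful (this is \cite[Lemma~9 and Theorem~10]{census1}), so $\bar g\neq 1$ automatically. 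Also note that in this branch the small exceptions that occur are only $\Lambda_1$ and $\Lambda_3$; the remaining $\Lambda_i$ are arc-transitive and arise in the other branch.

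The arc-transitive branch is where your proposal has a genuine gap. Your plan ``local structure of $\Fix g$ plus Tutte's bound $\Rightarrow$ $n$ is bounded'' is not carried out, and in fact the local counting you describe only reproduces the content of Theorem~\ref{geometry}: it shows that if $g$ fixes no arc then $\fpr(g)\le 1/3$ (or $\Gamma\cong\K_{3,3}$). This forces $g$ to lie in an arc-stabiliser, and since for a cubic arc- but not $2$-arc-transitive group the arc-stabiliser is trivial, one is reduced to the $2$-arc-transitive case. From there the paper does not proceed by local combinatorics at all: it analyses a minimal normal subgroup $N$ of a smallest $2$-arc-transitive $G\ni g$, uses Lemma~\ref{lem:Nnonab} and Lemma~\ref{lem:biquasi} to reduce to the situation where $N\cong T^\ell$ is the unique minimal normal subgroup and is non-abelian with at most two orbits, and then bounds $|N:\cent N g|$ via Lemma~\ref{eq:22} and the known sizes of $G_v$ from Table~\ref{tab:AT2}. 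This pins $T$ down to a short explicit list of simple groups (cf.\ Table~\ref{table:1}), and a computer search over these finishes. The argument rests on Theorem~\ref{proposition:7} and hence on the classification of finite simple groups; a purely local/combinatorial bound of the kind you sketch is not known and is not what the paper does.
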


Observe that every primitive permutation group $G\le \Sym(\Omega)$ acts as an edge- and vertex-transitive group of automorphisms on each of its non-trivial orbital graphs on  $\Omega$.
The results of this paper can thus be considered as an attempt to generalise the theorems about the maximal number of fixed points of a non-identity element in a primitive permutation group; see for example \cite{GurMag,La,LS}.
One of the key ingredients in our proof of Theorems~\ref{thrm:1} is the following recent result \cite{PotSpiO2}. 
Since its proof  depends heavily on the classification of finite simple groups,
 so do the proofs of Theorems~\ref{thrm:1} and~\ref{thrm:2}.

\begin{theorem}
\cite[Theorem 1.1]{PotSpiO2}
\label{proposition:7}
Let $G$ be a transitive permutation group on $\Omega$ containing no non-trivial normal subgroups of order a power of $2$ (that is, ${\bf O}_2(G)=1$) and  let $\omega\in \Omega$ with $G_\omega$ being a $2$-group. Then $|\{\delta\in \Omega\mid \delta^g=\delta\}|\le |\Omega|/3$, for every $g\in G\setminus \{1\}$. 
\end{theorem}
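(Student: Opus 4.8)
The plan is to reduce the statement to a bound on the fixed-point ratio of a single involution, and then to argue according to the structure of the generalised Fitting subgroup $F^*(G)=F(G)E(G)$ of $G$ (note that $F(G)={\bf O}_{2'}(G)$ has odd order, since ${\bf O}_2(G)=1$). For any $g$ and any $k$ we have $\mathrm{Fix}(g)\subseteq\mathrm{Fix}(g^k)$, hence $\mathrm{fix}_\Omega(g)\le\mathrm{fix}_\Omega(g^k)$; and since $\mathrm{fix}_\Omega(h)=|h^G\cap G_\omega|\cdot|C_G(h)|/|G_\omega|=0$ when $h$ has odd prime order (the $2$-group $G_\omega$ contains no such element), taking $k$ so that $g^k$ has prime order reduces us to the case $g=t$ an involution. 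Now choose a Sylow $2$-subgroup $P$ of $G$ with $G_\omega\le P$: its core in $G$ is a normal $2$-subgroup, hence trivial, so $G$ acts faithfully and transitively on $\Omega'=G/P$; from $t^G\cap G_\omega\subseteq t^G\cap P$ and $\mathrm{fix}_\Omega(t)/|\Omega|=|t^G\cap G_\omega|/|t^G|$ we obtain $\mathrm{fix}_\Omega(t)/|\Omega|\le|t^G\cap P|/|t^G|=\mathrm{fix}_{\Omega'}(t)/|\Omega'|$. Thus it is enough to prove: if ${\bf O}_2(G)=1$, $P\in\mathrm{Syl}_2(G)$ and $t\in G$ is an involution, then $t$ fixes at most $|\Omega'|/3$ points of $\Omega'=G/P$, equivalently $|t^G|\ge 3|t^G\cap P|$.

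\emph{Case 1: $t$ does not centralise $F(G)$.} Then $t$ acts non-trivially on $Q={\bf O}_p(G)$ for some odd prime $p$. As $Q$ is a normal $p$-group and $P$ a $2$-group, $Q\cap P=1$, so $Q$ is semiregular on $\Omega'$. If a $Q$-orbit $\mathcal O$ is stabilised by $t$ and contains a point $\delta$ fixed by $t$, then identifying $\mathcal O$ with $Q$ via $x\mapsto x\delta$ shows that $t$ acts on $\mathcal O$ as conjugation by $t$, so $|\mathrm{Fix}(t)\cap\mathcal O|=|C_Q(t)|$; a $Q$-orbit disjoint from $\mathrm{Fix}(t)$ contributes nothing. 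Since $t$ is non-trivial on $Q$, the index $[Q:C_Q(t)]$ is a power of $p$ exceeding $1$, hence is at least $p\ge 3$, so $|C_Q(t)|\le|Q|/3$. Summing over the $|\Omega'|/|Q|$ orbits yields $\mathrm{fix}_{\Omega'}(t)\le|C_Q(t)|\cdot|\Omega'|/|Q|\le|\Omega'|/3$. (In particular this disposes of the case in which $t$ fails to centralise some abelian minimal normal subgroup of $G$.)

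\emph{Case 2: $t$ centralises $F(G)$.} As $C_G(F^*(G))\le F(G)$ and $t$ has even order, $t$ cannot centralise the layer $E(G)$; hence $E(G)\ne 1$ and $t$ either permutes the components of $E(G)$ non-trivially or induces a non-trivial automorphism on one of them. I would now use the O'Nan--Scott machinery — the reduction to the action of the socle and, when the action is imprimitive, the identity $\mathrm{fix}_{\Omega'}(t)=\sum_{B\colon tB=B}\mathrm{fix}_B(t)$ together with induction on the degree — to reduce to the situation where $G$ has a normal subgroup isomorphic to a direct power $T^k$ of a non-abelian simple group $T$ accounting for the non-abelian part of the socle. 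Because $G_\omega=P$ is a Sylow $2$-subgroup of $G$ and $T$ is normal, $T\cap P$ is a Sylow $2$-subgroup of $T$, so the $T$-orbits have odd length $|T|_{2'}$; this is very restrictive, and here the classification of finite simple groups enters. For $T=\Alt(n)$ or $\Sym(n)$ it forces $n$ to be small, and for $T$ of Lie type or sporadic it confines the field size or order to a bounded range. Feeding this into the CFSG-based upper bounds for fixed-point ratios of almost simple and product-type primitive groups (Liebeck--Shalev, Guralnick--Magaard, Burness, Lawther) leaves a short explicit list of configurations, each checked directly, in all of which $\mathrm{fix}_{\Omega'}(t)\le|\Omega'|/3$.

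The bulk of the work, and the step I expect to be the main obstacle, is Case 2. Three points need care: (a) one needs the \emph{sharp} constant $1/3$ for every surviving almost simple and product-type action, so a bound of the shape $o(1)$ or $1/2$ does not suffice — the value $1/3$ is actually attained, e.g.\ by $\Sym(3)$ on $3$ points and, more generally, by $\Sym(3)\wr\Sym(k)$ in product action on $3^k$ points (these have abelian socle and so in fact fall under Case~1, but they show the constant cannot be improved); (b) since after the reductions the action of $G$ need not be primitive, one must transport the socle-action estimates back to the full transitive action without degrading the constant, and the attendant block-system bookkeeping is delicate; and (c) one must assemble the finitely many exceptions to the generic fixed-point-ratio bounds and verify each by hand, using the structure of the relevant maximal or $2$-local subgroups. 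By contrast, Case~1 is a routine coprime-action computation.
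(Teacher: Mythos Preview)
The paper does not prove this statement at all: it is quoted verbatim as \cite[Theorem~1.1]{PotSpiO2} and used as a black-box ingredient, with the explicit remark that ``its proof depends heavily on the classification of finite simple groups.'' There is therefore no proof in the paper to compare your proposal against.

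That said, your outline is a reasonable shape for such a result and is consistent with the paper's hint that CFSG is essential. Your preliminary reductions are sound: an element with a fixed point lies in a conjugate of the $2$-group $G_\omega$, so has $2$-power order, and passing to an involution and then to the action on $G/P$ for a Sylow $2$-subgroup $P$ are both clean and correct (the core of $P$ vanishes precisely because ${\bf O}_2(G)=1$). Your Case~1 argument is correct and complete: for a semiregular normal $p$-subgroup $Q$ with $p$ odd, the identification of a $t$-stable $Q$-orbit with $Q$ itself shows that $t$ acts by conjugation, giving exactly $|C_Q(t)|$ fixed points per orbit, and $[Q:C_Q(t)]\ge p\ge 3$.

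Case~2, however, is only a plan, not a proof, and you say so yourself. The three difficulties you flag --- getting the sharp constant $1/3$ rather than an asymptotic bound, controlling the imprimitive bookkeeping so that passing to blocks does not degrade the constant, and handling the finite list of exceptional almost-simple configurations --- are exactly where the real content lies. In particular, the generic Liebeck--Shalev/Burness bounds typically give constants of the form $c/n$ or $4/3q$ rather than a uniform $1/3$, so the residual case analysis is not short, and your observation that the stabiliser is a full Sylow $2$-subgroup (forcing $T$-orbits of odd length $|T|_{2'}$) is the right lever but does not by itself finish the job. What you have written is a credible roadmap, but to claim a proof you would need to execute Case~2 in full, which is the substance of the cited paper.
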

 
The bound $1/3$ in the Theorems~\ref{thrm:1} and ~\ref{thrm:2} 
 is sharp in the sense that there exists an infinite family (in each case) meeting this bound.
To see this, consider the graph $\DW_m$ with vertex-set $\ZZ_m\times \ZZ_3$ and
the edge-set $\{(x,i),(x+1,j) \mid x\in \ZZ_m, i,j\in \ZZ_3, i\not =j\}$. The graph 
$\DW_m$ is clearly connected, $4$-valent and arc-transitive. Moreover, it admits an automorphism
which fixes every vertex of the form $(x,0)$ while swapping the vertices in 
each pair $\{ (x,1), (x,2)\}$, $x\in \ZZ_m$. In a similar way as $4$-valent arc-transitive
Praeger-Xu graphs yield $3$-valent vertex-transitive Split Praeger-Xu graphs (see Section~\ref{sec:subS}), one can apply the splitting ``operation'' to obtain a family of $3$-valent vertex-transitive
graphs $\mathrm{S}(\DW_m)$
of fixity exactly $1/3$ of the number of vertices.
This suggests the following problem:
 
\begin{problem}{\rm Determine the connected $4$-valent arc-transitive graphs and the connected $3$-valent vertex-transitive graphs admitting an automorphism fixing precisely $1/3$ of the vertices.}
\end{problem}

Theorems~\ref{thrm:1} and \ref{thrm:2}
 seem to be suggesting that the proportion of fixed points of a non-identity automorphism of a connected vertex-transitive graph is bounded by a ``small'' constant unless the graph is either small or rather ``special''. 
Since at this point it is not clear to us
what the class of ``special'' graphs of larger valencies might be
and what would be a meaningful ``small constant'',
we include the definition of ``special graphs'' and ``small constant''
as a part of the following problem:

\begin{problem}
\label{prob:openended}{\rm
For a given positive integer $d$ find a ``small constant'' $c_d$ and a ``well-understood''
family of ``special graphs'' ${\mathcal F}_d$ such that every finite connected $d$-valent vertex-transitive graph $\Gamma$ admitting a non-trivial automorphism fixing more than $c_d|\V\Gamma|$ vertices belongs to ${\mathcal F}_d$.}
\end{problem}

Finally, we would like to propose the following ``edge-fixing'' variation of Theorems~\ref{thrm:1} and~\ref{thrm:2}:

\begin{problem}{\rm Determine the connected $4$-valent arc-transitive graphs and the connected $3$-valent vertex-transitive graphs admitting an automorphism fixing more than $1/3$ of the {\em edges}.}
\end{problem}

\subsection{Basic terminology and notation}\label{sub0}

A graph in this paper will be viewed as a pair $(V,E)$ where $V$ is a finite non-empty set of
{\em vertices} and $E$ is a set of unordered pairs of $V$, called {\em edges}. If $\Gamma:=(V,E)$
is a graph, then we let $\V\Gamma:=V$ and $\E\Gamma:= E$. An {\em $s$-arc} of a graph
 is an $(s+1)$-tuple of vertices with every two consecutive vertices adjacent and
every three consecutive vertices pair-wise distinct. In particular, a $1$-arc is also called an {\em arc}. The set of arcs of a graph $\Gamma$ is denoted $\A\Gamma$.

We will also need a notion of a {\em digraph}, which we define to be a pair $(V,A)$,
where $V$ is a finite non-empty set of
{\em vertices} and $A$ is a set of ordered pairs of distinct vertices.
Elements of $A$ are called {\em arcs} of the digraph.
An {\em $s$-arc} of a digraph is an $(s+1)$-tuple of vertices 
such that every two consecutive vertices form an arc.
If $(u,v)$ is an arc of a digraph, then we say that $v$ is an {\em out-neighbour} of $u$ and that
$u$ is an {\em in-neighbour} of $v$. The {\em out-valency} ({\em in-valency}, respectively) of a given vertex is then the number of its in-neighbours (out-neighbours, respectively).
If $\vGa:=(V,A)$ is a digraph, then the underlying graph of $\vGa$ is the graph $(V,E)$
with $E:=\{ \{u,v\} : (u,v) \in A\}$. 
Note that when $\vGa$ is an {\em orientation} (that is, when
$(u,v) \in \A\vGa$ implies $(v,u)\not\in \A\vGa$),
then there is a bijective correspondence between the arcs of $\vGa$
and edges of the underlying graph.

Let $\Gamma$ be a graph (or a digraph), let $G\le \Aut(\Gamma)$ and let $v\in \V\Gamma$. We denote by $G_v$ the {\em stabiliser} of the vertex $v$, by $\Gamma(v) = \{ u\in \V\Gamma : (v,u)\in \A\Gamma\}$ the {\em neighbourhood} of the vertex $v$ and by $G_v^{\Gamma(v)}$ the permutation group induced by $G_v$ on $\Gamma(v)$.  
Suppose now that $\Gamma$ is a $G$-arc-transitive connected graph. 
As usual, when $G=\Aut(\Gamma)$, we omit the label $G$ and we simply say that $\Gamma$ is $s$-arc-transitive. Observe that a $G$-arc-transitive graph $\Gamma$ is $(G,2)$-arc-transitive if and only if  $G_v^{\Gamma(v)}$ is a $2$-transitive permutation group.

An edge- and vertex-transitive group of automorphisms $G$ of a connected graph $\Gamma$ that is not arc-transitive is called $\frac{1}{2}$-arc-transitive. Note that
in this case $G$ possesses two orbits on arcs, each orbit containing precisely one arc underlying
each edge. If $A$ is an orbit of $G$ on the arc-set of $\Gamma$, then $(\V\Gamma,A)$ is
an arc-transitive digraph, denoted $\vGa^{(G)}$, whose underlying graph is $\Gamma$. In particular,
if $\Gamma$ has valency $4$, then the in-valence and out-valence of every vertex of $\vGa^{(G)}$ 
is $2$.

Given a set $\Omega$, we denote by $\Sym(\Omega)$ and $\Alt(\Omega)$ the symmetric and the alternating group on $\Omega$. When the domain $\Omega$ is irrelevant or clear from the context, we write $\Sym(n)$ and $\Alt(n)$ for the symmetric and alternating group of degree $n$.
Given a permutation $g\in \Sym(\Omega)$, we write $\Fix_\Omega(g)$ for the set of {\em fixed points} $\{\omega\in\Omega\mid \omega^g=\omega\}$ of $g$ and we write $\fpr_\Omega(g)$ for the {\em fixed-point-ratio} of $g$, that is $$\fpr_\Omega(g):=\frac{|\Fix_\Omega(g)|}{|\Omega|}.$$

Given $n\in\mathbb{N}\setminus\{0\}$, we denote by $\D_n$ the dihedral group of order $2n$ and we view $\D_n$ as a permutation group of degree $n$; similarly, we denote by $\C_n$ the cyclic group of order $n$. Similarly, we denote by $\mathbb{Z}_n$ the integers modulo $n$.

A subgroup $G$ of $\Sym(\Omega)$ is said to be {\em semiregular} if the identity is the only element of $G$ fixing some point of $\Omega$. Let $G$ be a group and let $H$ be a subgroup of $G$, we denote by $H\backslash G$ the {\em set of right cosets} of $H$ in $G$. Recall that $G$ acts transitively on $H\backslash G$ by right multiplication.
If $G$ is a group and $a,b\in G$, we let $[a,b] = a^{-1}b^{-1}ab$ be the commutator of $a$ and $b$, and 
${\cent G a} = \{c\in G : ca=ac\}$ be the centraliser of $g$ in $G$.

\subsection{The twelve sporadic graphs from Theorems~\ref{thrm:1} and \ref{thrm:2}}
\label{sub1}

We start by describing the six sporadic examples from Theorem~\ref{thrm:1}.
\begin{enumerate}
\item[$\Psi_1$] The first graph is the complete  graph $K_5$. The automorphism group of $K_5$ is $\Sym(5)$. A permutation of $\Sym(5)$ fixing two or three points gives rise to a non-identity automorphism fixing more than a $1/3$ of the vertices.
\item[$\Psi_2$] The second graph is the complete bipartite graph minus a complete matching $K_{5,5}- 5K_2$. The automorphism group of this graph is isomorphic to $\Sym(5)\times \C_2$. A permutation of $\Sym(5)$ fixing two or three points gives rise to a non-identity automorphism fixing four or six vertices and hence fixing  more than a $1/3$ of the vertices. Moreover,
 $\Aut(\Psi_2)$ contains a vertex-transitive copy of $\Sym(5)$ which fixes four vertices of $\Psi_2$.
\item[$\Psi_3$] The third graph arises from the Fano plane. This graph is bipartite with bipartition given by the seven points and the seven lines of the Fano plane, where the incidence in the graph is given by the anti-flags in the plane, that is, the point $p$ is adjacent to the line $\ell$ if and only if $p\notin \ell$. In other words, $\Psi_3$ is the bipartite complement of the Heawood graph. The automorphism group of this graph is isomorphic to $\Aut(\PSL_3(2))\cong \PGL_2(7)$. An involution of $\PSL_3(2)$ gives rise to a non-identity automorphism fixing  six vertices and hence fixing  more than a $1/3$ of the vertices of the graph.
 \item[$\Psi_4$] The fourth graph is similar to $\Psi_3$ and arises from the projective plane over the finite field with three elements. This graph is bipartite with bipartition given by the thirteen points and the thirteen lines of the projective plane, where the incidence in the graph is given by the flags in the plane, that is, the point $p$ is adjacent to the line $\ell$ if and only if $p\in \ell$. The automorphism group of $\Psi_4$ is isomorphic to $\Aut(\mathrm{PGL}_3(3))$. An involution of $\mathrm{PGL}_3(3)$ gives rise to a non-identity automorphism fixing  ten vertices and hence fixing  more than  $1/3$ of the vertices.
\item[$\Psi_5$] The fifth graph is a Kneser graph. This graph has $35$ vertices and these are labeled by the $35$ subsets of $\{1,\ldots,7\}$ having cardinality $3$. Two $3$-subsets $a$ and $b$ are declared to be adjacent  if and only if $a\cap b=\emptyset$. The automorphism group of this graph is isomorphic to $\Sym(7)$. A transposition of $\Sym(7)$ gives rise to a non-identity automorphism fixing  fifteen vertices and hence fixing  more than  $1/3$ of the vertices of the graph.
\item[$\Psi_6$] The sixth (and last) graph is the standard double cover of $\Psi_5$. This graph has $70$ vertices and these are labeled by the ordered pairs $(v,i)$, where $v$ is a vertex of $\Psi_5$ and $i\in \{0,1\}$. The vertices $(v,0)$ and $(w,1)$ are declared to be adjacent  if and only if $v$ and  $w$ are adjacent in $\Psi_5$. The automorphism group of this graph is isomorphic to $\Sym(7)\times \C_2$. A transposition of $\Sym(7)$ gives rise to a non-identity automorphism fixing  thirty vertices and hence fixing  more than  $1/3$ of the vertices of the graph.
Similarly as $\Psi_2$,  $\Aut(\Psi_6)$ also contains a vertex-transitive copy of the symmetric group $\Sym(7)$, however the maximum fixed point-ratio of a non-trivial element in
this group is $1/5$.
\end{enumerate}

 We now describe the six sporadic examples from Theorem~\ref{thrm:2}.
\begin{enumerate}
\item[$\Lambda_1$] The first graph is the complete  graph $K_4$. The automorphism group of this graph is $\Sym(4)$. A transposition of $\Sym(4)$   gives rise to a non-identity automorphism fixing more than  $1/3$ of the vertices of the graph.
\item[$\Lambda_2$] The second graph is the complete bipartite graph $K_{3,3}$. The automorphism group of this graph is isomorphic to $\Sym(3)\wr \Sym(2)$. A transposition from the base group $\Sym(3)\times \Sym(3)$ gives rise to a non-identity automorphism fixing four vertices and hence fixing  more than  $1/3$ of the vertices.
\item[$\Lambda_3$] The third graph is the $1$-skeleton of the cube. This graph is the Hamming graph over the $3$-dimensional vector space $\mathbb{F}_2^3$ over the field $\mathbb{F}_2$ with two elements. Two vertices $(x_1,x_2,x_3)$ and $(y_1,y_2,y_3)$ are declared to be adjacent if and only if the vectors $(x_1,x_2,x_3)$ and $(y_1,y_2,y_3)$ differ in one, and only one, coordinate.  The automorphism group of this graph is isomorphic to $\Sym(2)\wr\Sym(3)\cong \Sym(4)\times \Sym(2)$. A transposition from $\Sym(4)$ gives rise to a non-identity automorphism fixing  four vertices and hence fixing  more than $1/3$ of the vertices.
 \item[$\Lambda_4$] The fourth graph is the ubiquitous Petersen graph and  it is a Kneser graph where the $10$ vertices are the subsets of $\{1,\ldots,5\}$ having cardinality $2$. Two $2$-subsets $a$ and $b$ are declared to be adjacent  if and only if $a\cap b=\emptyset$.  The automorphism group of this graph is isomorphic to $\Sym(5)$. A transposition from $\Sym(5)$ gives rise to a non-identity automorphism fixing  four vertices and hence fixing  more than  $1/3$ of the vertices of the graph.

\item[$\Lambda_5$] The fifth graph arises from the Fano plane and it is the bipartite complement of  $\Psi_3$, that is, $\Lambda_5$ is the Heawood graph.  The automorphism group of this graph is isomorphic to $\Aut(\mathrm{GL}_3(2))\cong \mathrm{PGL}_2(7)$. An involution of $\mathrm{GL}_3(2)$ gives rise to a non-identity automorphism fixing  six vertices and hence fixing  more than  $1/3$ of the vertices of the graph.

\item[$\Lambda_6$] The sixth (and last) graph is the standard double cover of the Petersen graph. This graph has $20$ vertices and these are labeled by the ordered pairs $(v,i)$, where $v$ is a vertex of the Petersen graph and $i\in \{0,1\}$. The vertices $(v,0)$ and $(w,1)$ are declared to be adjacent  if and only if $v$ and  $w$ are adjacent in the Petersen graph. The automorphism group of this graph is isomorphic to $\Sym(5)\times \C_2$. A transposition of $\Sym(5)$ gives rise to a non-identity automorphism fixing  eight vertices and hence fixing  more than  $1/3$ of the vertices of the graph.
\end{enumerate}

\subsection{The Praeger-Xu graphs}
\label{sub2}

We now define the infinite family appearing in Theorem~\ref{thrm:2}~{\bf (ii)}. These are the ubiquitous $4$-valent \textit{Praeger-Xu graphs} $\C(r,s)$, studied in detail by Gardiner, Praeger and Xu in \cite{11,23}, and more recently in \cite{JPW}. We introduce them through their directed counterparts defined in \cite{PraHiAT}.

Let $r$ be an integer, $r\ge 3$.
Then $\vC(r,1)$ is the  the lexicographic product of a directed cycle of length $r$ with
 an edgeless graph on
$2$ vertices. In other words, $\V\vC(r, 1) = \mathbb{Z}_r \times \mathbb{Z}_2$ with 
the out-neighbours of a vertex $(x, i)$ being $(x+1,0)$ and $(x+1,1)$.
For $s\ge 2$, let $\V\vC(r,s)$ be the set of all $(s-1)$-arcs of $\vC(r,1)$ with
the out-neighbours of  $(v_0,v_1, \ldots, v_{s-1}) \in \V\vC(r,s)$ being $(v_1, \ldots, v_{s-1},u)$ and
$(v_1, \ldots, v_{s-1},u')$, where $u$ and $u'$ are the two out-neighbours of $u$ in $\vC(r,1)$.
The graph $\C(r,s)$ is then defined as the underlying graph of $\vC(r,s)$.

Clearly, $\C(r, s)$ is a connected $4$-valent graph with $r2^s$ vertices (see \cite[Theorem 2.8]{PraHiAT}).
Let us now discuss the automorphisms of the graphs $\C(r, s)$. Clearly, every automorphism of
$\vC(r,1)$ ($\C(r,1)$, respectively) acts naturally as an automorphism of
$\vC(r,s)$ ($\C(r,s)$, respectively) for every $s\ge 2$. For $i\in \ZZ_r$, let $\tau_i$ be the transposition on $\V\vC(r,1)$ swapping the vertices $(i,0)$ and $(i,1)$ while fixing every other vertex. This is clearly an
automorphism of $\vC(r,1)$, and thus also of $\vC(r,s)$ for $s\ge 2$. 
 Let
\begin{equation}
\label{eq:K}
\K:=\langle \tau_i \mid i\in \ZZ_r\rangle
\end{equation}
 and observe that
$\K \cong \C_2^r$. Further, let $\rho$ and $\sigma$ be the permutations on $\V\vC(r,1)$
defined by 
$$(x,i)^\rho := (x+1,i) \> \hbox{ and } \> (x,i)^\sigma := (x,-i).$$ 
Then $\rho$ is an automorphism of
$\vC(r,1)$, and $\sigma$ is an automorphism of $\C(r,1)$ (but not of $\vC(r,1)$). Observe that
the group $\langle \rho,\sigma\rangle$ normalises $\K$. Let 
\begin{equation}
\label{eq:H}
\rH := \K\langle\rho,\sigma\rangle\> \hbox{ and } \> \rH^+ := \K\langle\rho\rangle.
\end{equation}
Then clearly $C_2\wr \D_r \cong \rH \le \Aut(\C(r,s))$ and
$C_2\wr \C_r \cong \rH^+ \le \Aut(\vC(r,s))$ for every $r\ge 3$ and $s\ge 1$. Moreover,
$\rH$ ($\rH^+$, respectively) acts arc-transitively on $\C(r,s)$ ($\vC(r,s)$, respectively)
 whenever $1\le s \le r-1$. With three exceptions, the groups $\rH$ and $\rH^+$ are
in fact the full automorphism groups of $\C(r,s)$ and $\vC(r,s)$, respectively:

\begin{lemma}{\rm (\cite[Theorem~2.13]{23} and \cite[Theorem 2.8]{PraHiAT})}
 \label{bloodyhell}
 Let $r$, $s$, $\rH$ and $\rH^+$ be as above. Then $\Aut(\vC(r,s)) = \rH^+$ and if $r \ne 4$,
then $\Aut(\C(r,s)) = \rH$. Moreover, $|\Aut(\C(4, 1)) : \rH| = 9$, $|\Aut(\C(4, 2)) : \rH| = 3$ and
$|\Aut(\C(4, 3)) : \rH| = 2$.
\end{lemma}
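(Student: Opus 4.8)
The plan is to take this result entirely from the literature, since both halves are explicitly proved in the cited references. The first task is to verify that the groups $\rH$ and $\rH^+$ defined in \eqref{eq:H} are the same objects as the automorphism groups computed in \cite[Theorem 2.13]{23} and \cite[Theorem 2.8]{PraHiAT}. Concretely, I would first recall that $\C(r,s)$ and $\vC(r,s)$ as defined here (via the lexicographic-product construction on $(s-1)$-arcs of $\vC(r,1)$) agree with the graphs denoted the same way in those papers; this is a matter of matching the two definitions, and in the directed case it is Theorem 2.8 of \cite{PraHiAT} that $\Aut(\vC(r,s))$ is precisely the group $C_2 \wr \C_r$ acting in the manner described, which is our $\rH^+$. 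For the undirected case one invokes \cite[Theorem 2.13]{23}: for $r \ne 4$ one has $\Aut(\C(r,s)) = C_2 \wr \D_r = \rH$, and for $r = 4$ the three sporadic indices $9$, $3$, $2$ are exactly the exceptional values recorded there (these arise because $\C(4,1)$, $\C(4,2)$, $\C(4,3)$ have extra symmetry coming from the fact that $\vC(4,1)$, up to the doubling, is related to small incidence structures).

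The one genuine verification step, which I would carry out explicitly, is to confirm that $\rH \le \Aut(\C(r,s))$ and $\rH^+ \le \Aut(\vC(r,s))$ and that $\rH \cong C_2 \wr \D_r$, $\rH^+ \cong C_2 \wr \C_r$, since the lemma statement leans on the discussion immediately preceding it (``Then clearly $C_2 \wr \D_r \cong \rH \le \Aut(\C(r,s))$\dots''). The containment is immediate: each $\tau_i$ is an automorphism of $\vC(r,1)$ because it permutes the fibre over $i \in \ZZ_r$ and the out-neighbour relation only depends on the first coordinate; hence it induces an automorphism of every $\vC(r,s)$, and similarly for $\rho$ and $\sigma$. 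The isomorphism type follows because $\K = \langle \tau_i : i \in \ZZ_r\rangle \cong \C_2^r$ is normalised by $\langle \rho, \sigma\rangle \cong \D_r$ with the $\rho$-conjugation cyclically permuting the $\tau_i$ and $\sigma$ fixing each $\tau_i$ (since $\sigma$ fixes the first coordinate), so the action of $\D_r$ on the $r$ direct factors of $\K$ is the natural one, giving the wreath product; and $\K \cap \langle\rho,\sigma\rangle = 1$ since a nontrivial element of $\K$ moves some vertex within its fibre while elements of $\langle\rho,\sigma\rangle$ do not (for $s = 1$; for $s \ge 2$ one lifts this observation along the $(s-1)$-arc construction).

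There is essentially no obstacle here beyond bookkeeping: the substance of the lemma — that there are no further automorphisms, and the precise exceptional indices at $r = 4$ — is the content of \cite[Theorem 2.13]{23} and \cite[Theorem 2.8]{PraHiAT}, which we are entitled to quote. The only mild subtlety is ensuring that the normalisation conventions in \cite{23} (where the Praeger–Xu graphs were originally analysed with $\Aut$ computed for all $r \ge 3$, $1 \le s \le r-1$, and where for $s \ge r$ the graph $\C(r,s)$ becomes a cover with different behaviour) match the range of parameters relevant to us; since Theorems \ref{thrm:1} and \ref{thrm:2} only ever use $\C(r,s)$ with $1 \le s < 2r/3 \le r-1$ when $r \ge 3$, no boundary case is an issue, and I would simply remark that we apply the cited results in this range.
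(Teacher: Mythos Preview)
Your proposal is correct and matches the paper's treatment: the lemma is stated there as a direct citation of \cite[Theorem~2.13]{23} and \cite[Theorem~2.8]{PraHiAT} with no proof given, so quoting the literature after matching conventions is exactly what is intended. One small slip in your verification sketch: $\sigma$ (which should be read as $(x,i)\mapsto(-x,i)$, reversing the cyclic coordinate) does not fix each $\tau_i$ but conjugates $\tau_i$ to $\tau_{-i}$; this is still the natural $\D_r$-action on the $r$ factors of $\K$, so the wreath-product identification goes through unchanged.
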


\begin{remark}
\label{rem:PX2AT}{\rm
Lemma~\ref{bloodyhell} implies that $\C(r,s)$ is $2$-arc-transitive if and only if
$r=4$ and $s\in \{1,2\}$.}
\end{remark}

Let $v$ be a vertex of $\vC(r,s)$ which as an $(s-1)$-arc of $\vC(r,1)$
starts in $(x,0)$ or $(x,1)$ for some $x\in \ZZ_r$. Observe that then
$\Aut(\vC(r,s))_v = \langle \tau_i \mid i\in \ZZ_r\setminus \{x,x+1, \ldots,x+s-1\}\rangle$, 
showing that
\begin{equation}
\label{eq:KH+}
\K
\>\> = \>\> 
 \langle \Aut(\vC(r,s))_v \mid v \in \V\vC(r,s) \rangle
\>\> = \>\> 
\langle (\rH^+)_v \mid v \in \V\vC(r,s) \rangle 
\>\> =\>\> 
 \langle \K_v \mid v \in \V\vC(r,s) \rangle.
\end{equation}

The following result explains the restriction on $r$ and $s$ in Theorem~\ref{thrm:1}~{\bf (ii)}
and characterises the automorphisms of $\C(r,s)$ that fix more than $1/3$ of the vertices.

\begin{lemma}
\label{lem:rs}
The graph $\C(r,s)$ with $r\ge 3$ and $1\le s \le r-1$ contains
a non-identity automorphism fixing more than $1/3$ of the vertices if and only
$s < 2r/3$. In this case all such automorphisms belong to the group $\K$ defined by~\eqref{eq:K}.
\end{lemma}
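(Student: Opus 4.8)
The structure of $\C(r,s)$ as the underlying graph of the digraph $\vC(r,s)$ — whose vertices are $(s-1)$-arcs of $\vC(r,1)$, i.e.\ strings indexed by $s$ consecutive ``positions'' in $\ZZ_r$ — makes the group $\K\cong \C_2^r$ the obvious source of automorphisms with many fixed points, and indeed the last sentence of the lemma promises that \emph{every} qualifying automorphism lies in $\K$. So the proof naturally splits into two halves: (a) analyse $\fpr$ of the elements of $\K$ and show that $\K$ contains an element fixing more than $1/3$ of the vertices precisely when $s<2r/3$; and (b) show that no automorphism outside $\K$ can fix more than $1/3$ of the vertices, which simultaneously forces any qualifying automorphism into $\K$.

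For (a), recall that a vertex $v\in\V\vC(r,s)$ is an $(s-1)$-arc of $\vC(r,1)$ starting at $(x,*)$ for some $x\in\ZZ_r$; by the displayed formula in the excerpt, $\K_v=\langle\tau_i\mid i\notin\{x,x+1,\ldots,x+s-1\}\rangle$, so $\tau_i$ fixes $v$ exactly when $i$ is \emph{not} one of the $s$ ``window'' positions $x,\ldots,x+s-1$. More generally, for $g=\prod_{i\in S}\tau_i\in\K$ with $S\subseteq\ZZ_r$, $g$ fixes $v$ iff $S$ is disjoint from the length-$s$ cyclic interval $\{x,\ldots,x+s-1\}$. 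Counting: the number of starting positions $x$ whose window misses $S$ is the number of ways to place a cyclic arc of length $s$ in the complement of $S$ in the cycle $\ZZ_r$; writing the complement of $S$ as a union of cyclic ``gaps'' of lengths $\ell_1,\ldots,\ell_k$, this count is $\sum_j\max(\ell_j-s+1,0)$ when $S\ne\emptyset$ (and $r$ when $S=\emptyset$). Each such $v$, being an $(s-1)$-arc, also carries a free choice of its $s$ ``second-coordinate'' entries, but $g$ acts on those entries by flipping exactly the ones whose position lies in $S$; since we are in the case $S\cap\text{window}=\emptyset$, $g$ flips none of them, so all $2^s$ completions are fixed. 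Hence $|\Fix(g)|=2^s\cdot\sum_j\max(\ell_j-s+1,0)$ (for $g\ne1$), and $\fpr(g)=\frac1r\sum_j\max(\ell_j-s+1,0)$. Optimising over $S$: take $S$ a single element ($k=1$, one gap of length $r-1$), giving $\fpr=\tfrac{r-s}{r}$; more cleverly, take $S=\{0\}$ but note we could instead ask for the \emph{best} single-gap configuration — actually a single removed element is optimal among nonempty $S$ since merging gaps only helps. Wait: with $S=\{i\}$ we get one gap of length $r-1$ and $\fpr=(r-s)/r$; we want this $>1/3$, i.e.\ $r-s>r/3$, i.e.\ $s<2r/3$. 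Conversely if $s\ge 2r/3$ then for every nonempty $S$, $\sum_j(\ell_j-s+1)^+\le \sum_j\ell_j - (\text{number of contributing gaps})(s-1)\le (r-|S|) - (s-1)< r/3$ after a short estimate (using $s-1\ge 2r/3-1$ and $|S|\ge1$), so no non-identity element of $\K$ qualifies; one must check the inequality carefully when several gaps contribute, but each contributing gap has length $\ge s$, so $k$ of them use up $\ge ks\ge kr\cdot\frac23$ of the $r$ positions, forcing $k=1$, and then the bound is immediate.

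For (b), use Lemma~\ref{bloodyhell}: for $r\ne4$, $\Aut(\C(r,s))=\rH=\K\langle\rho,\sigma\rangle$, and $\K\unlhd\rH$ with $\rH/\K$ dihedral of order $2r$ acting on the positions $\ZZ_r$. An element $g\in\rH\setminus\K$ projects to a non-identity element $\bar g\in\D_r$ permuting positions; if $\bar g$ is a non-trivial rotation it has at most one fixed position (none, in fact, unless it is a reflection-type element), and if $\bar g$ is a reflection it fixes at most two positions. A vertex $v$ (an $(s-1)$-arc with window $W_x=\{x,\ldots,x+s-1\}$ and second-coordinate data) fixed by $g$ must have its window setwise fixed by $\bar g$; for a rotation $\bar g$ by $t\ne0$ this forces $W_x=W_{x+t}$, impossible for $1\le s\le r-1$ unless $t=0$, so rotations contribute nothing; for a reflection, the windows fixed setwise are those symmetric about the reflection axis, of which there are at most $2$ (the two axis configurations), and on such a window $g$ still acts nontrivially on the $2^s$ completions unless the induced position-permutation on $W$ is trivial, which for a genuine reflection happens for at most one window; a careful count gives $|\Fix(g)|\le 2^s\cdot 2 + (\text{at most }2^{s-1}\text{ extra from axis-fixed completions})$, which is at most $4\cdot 2^{s-1}\ll r2^s/3$ once $r\ge 13$, and the small cases $r\in\{3,\ldots,12\}$, $s\le r-1$ are finite and checked directly (or by the bound $|\Fix(g)|\le |\V\C(r,s)|/|\langle\bar g\rangle|$ when $\bar g$ has no fixed positions). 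For $r=4$ one invokes the index data in Lemma~\ref{bloodyhell}: $\Aut(\C(4,s))$ exceeds $\rH$ only for $s\in\{1,2,3\}$, a handful of explicit small graphs on $8,16,32$ vertices, handled by inspection. The combination of (a) and (b) gives both the ``iff $s<2r/3$'' and the ``all such automorphisms lie in $\K$'' assertions.

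**Main obstacle.** The genuinely delicate part is (b) — controlling fixed points of automorphisms \emph{outside} $\K$ — because the action of $\rH/\K\cong\D_r$ on the position-cycle interacts with the $2^s$ ``fibre'' of second-coordinate data in a way that requires a careful simultaneous count: one must rule out reflections picking up too many fixed vertices from windows centred on (or reflected across) the axis. I expect this to need a short case analysis on the type of $\bar g$ (rotation vs.\ reflection, and for reflections, axis through two positions vs.\ through two edge-midpoints) together with the observation that $s\le r-1$ already forbids a window from being rotation-invariant; the small-$r$ residue (including all of $r=4$) is then dispatched computationally. The $\K$-side count in (a) is essentially a clean combinatorial optimisation over subsets $S\subseteq\ZZ_r$ and should go through with no real friction once the fixed-vertex formula $\fpr(g)=\frac1r\sum_j(\ell_j-s+1)^+$ is in hand.
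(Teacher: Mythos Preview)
Your approach matches the paper's: part (a) is identical (the paper also shows a single $\tau_i$ achieves the maximum $\fpr=(r-s)/r$ over $\K\setminus\{1\}$, and that any product $\prod_{i\in J}\tau_i$ does no better), and part (b) rests on the same block system of $\K$-orbits $\Delta_x$ indexed by starting position $x\in\ZZ_r$, with $g\in\rH\setminus\K$ projecting to a non-identity element of $\rH/\K\cong\D_r$ acting on the index set.

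The one place you work harder than necessary is the reflection case in (b). Once you have observed that a non-identity element of $\D_r$ fixes at most two of the $r$ indices $x$ (zero for a non-trivial rotation, at most two for a reflection), you immediately get $|\Fix_{\V\Gamma}(g)|\le 2|\Delta_x|=2\cdot 2^s$ and hence $\fpr(g)\le 2/r$; there is no need to analyse how $g$ acts on the $2^s$ completions \emph{within} a preserved block. This gives $\fpr(g)<1/3$ as soon as $r\ge 7$, so the small-case residue drops from your $r\le 12$ down to $r\le 6$ (fourteen graphs in all, which also absorbs the exceptional $r=4$ cases from Lemma~\ref{bloodyhell}), and the paper simply checks these by computer.
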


\begin{proof}
Let $\Gamma = \C(r,s)$.
For $r\le 6$ the claim of the lemma can be verified by inspecting the $14$ graphs $\C(r,s)$, $3\le r\le 6$, $1\le s \le r-1$,
with a computer algebra system such as Magma~\cite{magma}. We may thus assume that $r\ge 7$.
By Lemma~\ref{bloodyhell}, we see that $\Aut(\Gamma) = \rH$.

Let $g$ be an arbitrary automorphism of $\Aut(\Gamma)$ fixing more than $1/3$ of the vertices.
Then $g = \tau\rho^i\sigma^\epsilon$ for some $\tau \in \K$, $i\in \ZZ_r$ and $\epsilon\in \{0,1\}$. 
For $x\in \ZZ_r$, let $\Delta_x$ be the set of $(s-1)$-arcs of $\vC(r,1)$ that start at a vertex $(x,0)$ or $(x,1)$,
and consider the elements of $\Delta_x$ as vertices of $\Gamma$.
Observe that each  $\Delta_x$ is an orbit of the action of $\K$ on $\V\Gamma$, that $(\Delta_x)^\rho = \Delta_{x+1}$
and that $(\Delta_x)^\sigma = \Delta_{-x-s+1}$. Consequently, $(\Delta_x)^g$ is either $\Delta_{x+i}$ if $\epsilon = 0$ 
or $\Delta_{-x-i-s+1}$ if $\epsilon = 1$. In particular, unless $i=0$ and $\epsilon = 0$, $g$ preserves at most $2$ out of $r$
orbits $\Delta_x$, $x\in \ZZ_r$. Since $r\ge 7$ and since $g$ fixes more then $1/3$ vertices of $\Gamma$, this implies
that $i=0$, $\epsilon = 0$, and thus that $g\in \K$.

 Finally, note that $\tau_i$ moves precisely those $(s-1)$-arcs  of $\vC(r,1)$ that pass through one of the vertices $(i,0)$ or $(i,1)$. 
 Therefore, $\tau_i$, as an automorphism of $\C(r,s)$, fixes all but $s2^{s}$ vertices.
Similarly, an element
$
\prod_{i\in J} \tau_i \in \K
$
moves precisely those $(s-1)$-arcs  of $\vC(r,1)$ that pass through at least one of the vertices $\{(i,\epsilon) : \epsilon \in \{0,1\}, i\in J\}$,
implying that such an element fixes at most as many elements as a single $\tau_i$. Hence
$$
\frac{1}{3} < \fpr_{\V\C(r,s)}(g) \le \fpr_{\V\C(r,s)}(\tau_i) = \frac{(r-s)2^s}{r2^s} = \frac{r-s}{r}
$$
and the result follows.
\end{proof}

The Praeger-Xu graphs can be characterised by an existence of an abelian normal subgroup
not acting semiregularly on the vertices.
The following result appeared as Theorem~1 in \cite{23} for the case of $G$ acting arc-transitively,
and as Theorem~2.9 in \cite{PraHiAT} for the $\frac{1}{2}$-arc-transitive case.
\begin{lemma}{\rm (see \cite[Theorem~1]{23} and \cite[Theorem 2.9]{PraHiAT})}
\label{cor:3.4}
Let $\Gamma$ be a connected $4$-valent graph and let $G$ be an edge- and vertex-transitive group of automorphisms of $\Gamma$.
If $G$ has an abelian normal subgroup which is not semiregular on the vertices of $\Gamma$, then $\Gamma\cong \C(r,s)$ with $r\ge 3$ and $1\le s\le r-1$. 
\end{lemma}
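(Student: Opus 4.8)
The plan is to reduce the statement to the already-cited structural theorems (\cite[Theorem~1]{23} and \cite[Theorem~2.9]{PraHiAT}) by a short case analysis on whether $G$ acts arc-transitively or only $\frac12$-arc-transitively on $\Gamma$. Since $\Gamma$ is connected and $4$-valent and $G$ is edge- and vertex-transitive, exactly one of these two cases occurs: either $G$ is arc-transitive, or $G$ has two orbits on arcs, each containing precisely one arc under each edge, so that $G$ acts as a $\frac12$-arc-transitive group (this dichotomy is recorded in Section~\ref{sub0}). The hypothesis is that $G$ has an abelian normal subgroup $N$ that is not semiregular on $\V\Gamma$, i.e. some non-identity element of $N$ fixes a vertex.

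First I would handle the arc-transitive case. Here one wants to invoke \cite[Theorem~1]{23}, which characterises exactly those connected $4$-valent arc-transitive pairs $(\Gamma,G)$ for which $G$ has a normal subgroup that is not semiregular and lies in a suitable ``local'' elementary abelian family; the conclusion of that theorem is precisely that $\Gamma\cong\C(r,s)$ for some $r\ge 3$, $1\le s\le r-1$. The only thing to check is that an abelian normal subgroup $N$ of $G$ which is not semiregular satisfies the hypotheses of that theorem. The key point is that a minimal normal subgroup of $G$ contained in $N$ (or $N$ itself, after restricting to a relevant section) is abelian, hence an elementary abelian $p$-group for some prime $p$; and since $G$ is vertex-transitive, $N$ has a fixed point somewhere iff it has the same orbit-size pattern everywhere is \emph{not} quite true, so instead one observes that $N \trianglel=G$ and $N$ non-semiregular means the kernel $K$ of the action of $N$-stabilisers is non-trivial on a block system, forcing $p=2$ and the block structure required by \cite[Theorem~1]{23}. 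I would then cite that theorem to conclude.

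Next the $\frac12$-arc-transitive case: choose one of the two $G$-orbits $A$ on arcs and form the arc-transitive digraph $\vGa^{(G)} = (\V\Gamma, A)$, which has in- and out-valency $2$ and underlying graph $\Gamma$. Now $G$ acts arc-transitively on $\vGa^{(G)}$, and $N$ is still an abelian normal subgroup of $G$ that is not semiregular on the vertices. Apply \cite[Theorem~2.9]{PraHiAT} to the pair $(\vGa^{(G)}, G)$: its conclusion is that $\vGa^{(G)}\cong \vC(r,s)$ for some $r\ge 3$, $1\le s\le r-1$, and hence $\Gamma$, being the underlying graph of $\vGa^{(G)}$, is isomorphic to $\C(r,s)$. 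As in the arc-transitive case, the content is to verify that a non-semiregular abelian normal subgroup meets the hypothesis of \cite[Theorem~2.9]{PraHiAT}; the same elementary-abelian / $2$-group reduction applies.

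The main obstacle is the verification step common to both cases: showing that ``$G$ has some abelian normal subgroup that is not semiregular'' is enough to trigger the cited theorems, whose hypotheses are usually phrased in terms of a specific normal $2$-subgroup with prescribed behaviour on the neighbourhoods (the ``$\K$-like'' subgroup from~\eqref{eq:K}). Concretely one must argue: if $N\trianglelefteq G$ is abelian and $N_v\ne 1$ for some $v$, then since $\Gamma$ is connected and $4$-valent and $N_v$ acts on $\Gamma(v)$ as a subgroup of an abelian stabiliser action, $N_v$ fixes $\Gamma(v)$ pointwise or acts with small orbits, and propagating this along edges (using normality of $N$ and connectedness) shows $N$ contains a non-trivial normal $2$-subgroup of $G$ that is not semiregular; that is exactly the situation addressed in \cite{23,PraHiAT}. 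I expect this propagation argument to be short but to require care about which vertex-stabiliser acts faithfully on which neighbourhood — everything else is a direct appeal to the two quoted results.
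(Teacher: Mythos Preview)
Your approach matches the paper exactly: the paper gives no proof of this lemma and simply records, in the sentence preceding it, that the arc-transitive case is \cite[Theorem~1]{23} and the $\frac12$-arc-transitive case is \cite[Theorem~2.9]{PraHiAT}. Your case split into arc-transitive versus $\frac12$-arc-transitive, followed by a direct citation of each result, is precisely what the paper does.

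One remark: the ``main obstacle'' you describe --- reducing an arbitrary non-semiregular abelian normal subgroup to the specific hypotheses of the cited theorems --- is not actually present. Both cited theorems are already stated under the hypothesis ``$G$ has an abelian normal subgroup which is not semiregular on vertices'', so no intermediate reduction to a normal $2$-subgroup or propagation argument is required; you can invoke them directly.
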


The following lemma is a generalisation of \cite[Lemma 3.1]{11} from the case of $G$
 being arc-transitive group to the case of an arbitrary edge- and vertex-transitive group $G$. 
The proof closely follows that of \cite[Lemma~3.1]{11}.

\begin{lemma}
\label{lem:GNc}
Let $\Gamma$ be a connected $4$-valent graph, let $G$ be an edge- and vertex-transitive group of automorphisms of $\Gamma$ and let $N$ be a minimal normal subgroup of $G$. 
If $N$ is a $2$-group and $\Gamma/N$ is a cycle, then $\Gamma\cong\C(r,s)$ for some $r$ and $s$.
\end{lemma}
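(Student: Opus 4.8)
The plan is to reduce the statement to Lemma~\ref{cor:3.4} by producing, inside $G$, an abelian normal subgroup that fails to be semiregular on $\V\Gamma$. Since $N$ is a $2$-group and $\Gamma/N$ is a cycle, say of length $r$, the quotient map $\V\Gamma\to\V(\Gamma/N)=\ZZ_r$ partitions $\V\Gamma$ into $r$ blocks $B_0,\ldots,B_{r-1}$ of imprimitivity for $G$, each being an $N$-orbit; because $N$ is a minimal normal elementary abelian $2$-group, $N$ acts on each block and we let $K_i$ be the kernel of the action of $N$ on $B_i$. First I would show that $K_i=1$ for every $i$ cannot happen together with $\Gamma$ being $4$-valent unless the blocks are very small: the induced graph structure between consecutive blocks $B_i,B_{i+1}$, together with connectivity and $4$-valency, forces each vertex of $B_i$ to have all four neighbours split between $B_{i-1}$ and $B_{i+1}$ (two on each side, as $\Gamma/N$ being a cycle means there are no edges inside a block and none skipping a block). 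So each $B_i$ is a set on which $N$ acts faithfully-or-not, and the bipartite graph between $B_i$ and $B_{i+1}$ is $2$-regular on each side, hence a disjoint union of even cycles, and $N$ acts on this configuration.

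Next I would exploit minimality of $N$: the subgroups $K_i$ are normalised by the setwise stabiliser $G_{B_i}$, and conjugation by the element of $G$ realising the $r$-cycle on blocks sends $K_i$ to $K_{i+1}$; hence $\bigcap_i K_i$ is normal in $G$ and contained in $N$, so by minimality it is either $N$ or $1$. If it is $N$, then $N$ acts trivially on every block, so $N=1$ (as $N$ acts faithfully on $\V\Gamma$), contradiction; therefore $\bigcap_i K_i=1$. A symmetric argument with the product $\prod_i K_i$ (or rather the subgroup generated by the $K_i$) shows that $L:=\langle K_i\mid i\in\ZZ_r\rangle$ is a normal subgroup of $G$ contained in $N$, so $L=N$ or $L=1$. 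The case $L=1$ means all $K_i=1$, i.e. $N$ acts faithfully on each block; combined with $|N|=|B_i|$ (since $B_i$ is a single $N$-orbit and $N$ abelian acting faithfully and transitively is regular on $B_i$), this would make $N$ semiregular on $\V\Gamma$, and then one appeals directly to Lemma~\ref{cor:3.4} only if $N$ were not semiregular — so in the semiregular case we instead argue that $\Gamma/N$ a cycle plus $N$ regular on blocks and $4$-valency forces $|N|\le$ small and one checks $\Gamma$ is a Praeger--Xu graph $\C(r,s)$ with $2^s=|N|$ directly from the covering structure. The interesting case is $L=N$: then some $K_i\ne 1$, and picking a nonidentity $t\in K_i$, the conjugates of $t$ under the block-rotation generate an abelian (being inside $N$) normal subgroup of $G$; this subgroup fixes every vertex of $B_i$ pointwise, hence is not semiregular, and Lemma~\ref{cor:3.4} applies to give $\Gamma\cong\C(r',s')$.

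Carrying this out cleanly, the cleanest route is probably: set $M:=\langle N_v : v\in\V\Gamma\rangle$ where $N_v$ is the stabiliser in $N$ of $v$; this is normal in $G$ (it is characteristic-like: any automorphism of $G$ fixing $N$ permutes the $N_v$) and contained in $N$. If $M\ne 1$ then $M$ is abelian, normal, and not semiregular (some $N_v\ne1$), so Lemma~\ref{cor:3.4} finishes. If $M=1$ then $N$ is semiregular; since $\Gamma/N$ is an $r$-cycle and $N$ is a $2$-group, every vertex has two neighbours mapping to each of the two neighbouring quotient-vertices, and following a spanning structure one sees $\Gamma$ is exactly the graph whose vertices are $(s-1)$-arcs of $\vC(r,1)$ for $2^s=|N|$, i.e. $\C(r,s)$ — this last identification can be borrowed almost verbatim from the proof of \cite[Lemma~3.1]{11}.

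The hard part will be the semiregular case $M=1$: one must actually reconstruct the Praeger--Xu combinatorics from the hypothesis that $N$ is an elementary abelian $2$-group acting semiregularly with cyclic quotient, rather than merely invoking Lemma~\ref{cor:3.4}. The key obstacle is showing $N$ must be elementary abelian of the right rank and that the ``layering'' of $\Gamma$ over the cycle matches the $(s-1)$-arc construction; this is where the argument from \cite[Lemma~3.1]{11} is used, and one has to verify that replacing ``$G$ arc-transitive'' by ``$G$ edge- and vertex-transitive'' does not break that reconstruction — it does not, because the $\frac12$-arc-transitive version is already covered by \cite[Theorem~2.9]{PraHiAT}, i.e. by Lemma~\ref{cor:3.4}, once we have exhibited the relevant non-semiregular abelian normal subgroup, so in the genuinely semiregular subcase we can instead show directly that $N$ elementary abelian forces $\Gamma\cong\C(r,s)$ by an explicit covering argument, and when $N$ is not elementary abelian its Frattini-type subgroup gives a smaller nontrivial normal $2$-subgroup whose orbits still quotient to a cycle, allowing an induction on $|N|$.
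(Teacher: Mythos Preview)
Your reduction to Lemma~\ref{cor:3.4} in the non-semiregular case is correct and matches the paper exactly, and you are right that the arc-transitive semiregular case is covered verbatim by \cite[Lemma~3.1]{11}. The gap is in the remaining case: $N$ semiregular and $G$ only $\frac{1}{2}$-arc-transitive. Here your proposal hand-waves (``explicit covering argument'', ``induction on $|N|$'') without supplying the mechanism, and the induction remark is confused --- $N$ is a minimal normal $2$-subgroup, hence automatically elementary abelian, so there is no ``$N$ not elementary abelian'' subcase and no smaller normal subgroup to induct on. Nor does Lemma~\ref{cor:3.4} help you here as written, since you have not yet produced any abelian normal subgroup that fails to be semiregular.

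The paper's device for this case is to manufacture a \emph{larger} abelian normal subgroup. Let $K$ be the kernel of the action of $G$ on the set of $N$-orbits and set $C:=\cent K N$. Since $N$ is abelian, $N\le C$; on each $N$-orbit $u^N$ the induced group $C^{u^N}$ is regular (its point-stabiliser fixes the whole block), hence equals $N^{u^N}$ and is abelian. As $C$ embeds into the direct product of the groups $C^{u^N}$, the group $C$ itself is abelian and normal in $G$. If $C_u\ne 1$ for some $u$, Lemma~\ref{cor:3.4} finishes. Otherwise $C=N$; but $K$ strictly contains $N$ (the element of $G_v$ swapping the two out-neighbours of $v$ in $\vGa^{(G)}$ lies in $K_v\setminus N$), so $K=NK_v$ is a $2$-group whose conjugation action on $N\setminus\{1\}$ has a fixed point, forcing $N\le \Z K$ and hence $K\le \cent K N = C = N$, a contradiction. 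This centralizer-plus-centre argument is the idea missing from your plan.
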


\begin{proof}
If $N$ does not act semiregularly on $\V\Gamma$, then the result follows by Lemma~\ref{cor:3.4}. We may thus assume that $N$ is semiregular.
If $G$ is arc-transitive, then result follows directly from \cite[Lemma 3.1]{11}.
We may thus assume that $G$ is $\frac{1}{2}$-arc-transitive.

Let $K$ be the kernel of the action of $G$ on the vertex-set $\{u^N : u \in \V\Gamma\}$ of
$\Gamma/N$ and let $C=C_K(N)$ be the centraliser of $N$ in $K$. Note that both $K$ and $C$ are normal in $G$.
Since $N$ is abelian, we have $N\le C$.
The stabiliser $C_u$ clearly fixes every vertex of $u^N$, showing that the group
 $C^{u^N}$ induced by the action of $C$ on $u^N$ is regular, implying that
 $C^{u^N} = N^{u^N}$ and so $C^{u^N}$ is abelian. But every permutation group
 is isomorphic to a subgroup of the direct product of the permutation groups it induces on its orbits.
 In particular, $C$ is isomorphic to a subgroup  $C^{u_1^N} \times \cdots \times C^{u_m^N}$
 where $u_1^C,\ldots,u_m^C$ are the $C$-orbits on $\V\Gamma$, implying that $C$ is abelian.
 If $C_u \not =1$ for some $u\in \V\Gamma$, then by Lemma~\ref{cor:3.4}, $\Gamma\cong\C(r,s)$.
 We may thus assume that $C_u  =1$, and therefore that $C=N$.
 
Let $v\in \V\Gamma$ and let $u,w$ be the out-neighbours of $v$ in the digraph $\vGa^{(G)}$.
Since $G$ acts arc-transitively on $\vGa^{(G)}$, there is an element $h\in G_v$ swapping the vertices $u$ and $w$.
Moreover, such an element clearly preserves each $N$ orbit, implying that $g\in K$. In particular,
$K_v\not = 1$, and so $N$ is a proper subgroup of $K$. On the other hand, since $v^N = v^K$, we have $K=NK_v$, implying that $K$ is a $2$-group. The action of $K$ on the set $N\setminus\{1\}$ of odd cardinality by conjugation thus has at least one fixed point, say $x\in N$. But then $x$ is centralised by
every element in $K$ and thus $x\in \Z K$. In particular $\Z K\cap N$ is a non-trivial normal subgroup
of $G$ contained in $G$. By minimality of $N$, it follows that $N \le \Z K$. But then $C=K$, contradicting the fact that $C_v = 1$. This contradiction shows that $\Gamma\cong\C(r,s)$ as claimed.
\end{proof}

\subsection{Split Praeger-Xu graphs}
\label{sec:subS}

We now define the family of the {\em Split Praeger-Xu graphs} $\Spl(\C(r,s))$, featuring in Theorem~\ref{thrm:2}~{\bf (ii)}. This family is obtained from the Praeger-Xu graphs via the {\em splitting operation} $\mathrm{S}(-)$, which was introduced in~\cite[Construction 9]{census1}. 
Rather then defining $\mathrm{S}(-)$ in its full generality here, we only describe it in the special case of the Praeger-Xu graphs $\C(r,s)$. We refer the reader to \cite[Section 4]{census1} for
more information on this operator.

Split each vertex $v$ of $\vGa:=\vC(r,w)$ into two copies, denoted $v_+$ and $v_-$, and let $v_-$ be adjacent to $v_+$ and to $u_+$ whenever $u$ is an in-neighbour of $v$ in $\vGa$.
Similarly, let $v_+$ be adjacent to $v_-$ and to $w_-$ whenever $w$ is an out-neighbour
 of $v$ in $\Gamma$. The resulting $3$-valent graph is then called the {\em Split Praeger-Xu graph}
and denoted $\Spl(\C(r,s))$. Observe that the automorphism group $\Aut(\vC(r,s))$ ($=\rH$)
acts faithfully as a vertex- but not arc-transitive group of automorphisms of $\Spl(\C(r,s))$
and that for every $g\in \rH$ we have $\fpr_{\V\Spl(\C(r,s))}(g) = \fpr_{\V\C(r,s)}(g)$.

\subsection{Normal quotients}

The proofs of the main theorems are inductive with the induction step using the notion of a normal quotient of a graph introduced in \cite[Section 4]{11_1}.

\begin{definition} Let $\Gamma$ be a connected 
graph (or digraph) and let $N \le \Aut(\Gamma)$. The {\em normal quotient} $\Gamma/N$ is the graph (or digraph) whose vertices are the $N$-orbits on $\V\Gamma$ with two distinct such $N$-orbits $v^N$ and $u^N$ forming an arc $(v^N,u^N)$ of $\Gamma/N$ whenever there is a pair of vertices $v'\in v^N$ and $u'\in u^N$ such that $(v',u')$ is an arc of $\Gamma$. 
\end{definition}

If the group $N$ is normalised by some group $G\le \Aut(\Gamma)$, then $G/N$ acts (possibly unfaithfully)  on $\Gamma/N$ as a group of automorphisms. If $G$ is vertex-, edge- or arc-transitive on $\Gamma$, then so is $G/N$ on $\Gamma/N$.
Suppose now that $\Gamma$ is a $4$-valent graph and that $G$ is arc-transitive. Then
the valency of $\Gamma/N$ is either $0$ (when $N$ is transitive on $\V\Gamma$), $1$ (when $N$ has $2$ orbits on $\V\Gamma$), $2$ (when $\Gamma/N$ is a cycle) or $4$. In the latter case, $G/N$ acts faithfully on $\V\Gamma$ and hence $\Gamma/N$ is a connected $4$-valent $G/N$-arc-transitive graph   with vertex-stabiliser $(G/N)_{v^N}=G_vN/N$ in $G/N$ isomorphic to $G_v$. Moreover, if $\Gamma/N$ is $4$-valent, $N_v=1$ for every vertex $v\in\V\Gamma$.

The following lemma follows almost immediately from the theory of lifting automorphisms
along covering projections, as developed in \cite{MNS00}. However, in order to avoid leading the reader astray with introducing these methods, we decided to provide a straightforward, though longer proof.

\begin{lemma}\label{4.6}
Let $\Gamma$ be a connected $4$-valent 
$G$-arc-transitive 
graph and let $N$ be a semiregular normal subgroup of $G$ such that the normal quotient $\Gamma/N$ is a cycle of length $r \ge 3$. Let $K$ be the kernel of the action
of $G$ on the $N$-orbits on $\V\Gamma$. Then $K_v$ is an elementary abelian $2$-group.
\end{lemma}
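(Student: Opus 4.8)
The plan is to exploit the structure of a $4$-valent arc-transitive graph $\Gamma$ whose normal quotient $\Gamma/N$ is an $r$-cycle with $N$ semiregular. First I would unpack what $K$ looks like: since $N$ is semiregular and transitive on each fibre $v^N$, and $K$ fixes every fibre setwise, the group $K_v$ fixes the fibre $v^N$ pointwise; conversely $K = N K_v$ because $N$ is transitive on $v^N$. The cycle $\Gamma/N$ has two edges at each vertex $v^N$, so the four neighbours of any $u \in v^N$ split into two pairs lying in the two neighbouring fibres of $v^N$; arc-transitivity of $G$ forces $G_v$ to act as a group containing a transposition-like element on $\Gamma(v)$, and in particular $K_v$ is a $2$-group (it sits inside $G_v$, and $|G_v|$ divides a power of $2$ times small factors — more precisely, since $\Gamma/N$ is $2$-valent, the action of $G_v$ on $\Gamma(v)$ preserves the partition into two pairs, so $G_v^{\Gamma(v)} \le \D_4$ is a $2$-group, and the kernel of $G_v$ acting on $\Gamma(v)$ is trivial by connectedness and a standard argument, giving $G_v$ a $2$-group and hence $K_v$ a $2$-group).

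Next I would show $K_v$ is abelian. The key is to look at how $K_v$ acts on the fibres neighbouring $v^N$. Label the fibres $F_0, F_1, \ldots, F_{r-1}$ cyclically, with $v \in F_0$. Since $K_v$ fixes $F_0$ pointwise, and each vertex of $F_0$ has exactly two neighbours in $F_1$ and two in $F_{r-1}$, the group $K_v$ permutes $F_1$ (and $F_{r-1}$) while respecting the "which vertex of $F_0$ am I adjacent to" structure. Walking around the cycle, $K_v$ fixing $F_0$ pointwise constrains its action on $F_1$, then the action on $F_1$ constrains the action on $F_2$, and so on. I would set up the argument so that $K_v$ embeds into a product of the permutation groups it induces on the individual fibres $F_1, \ldots, F_{r-1}$ (it acts trivially on $F_0$), and argue that on each fibre the induced group is elementary abelian — indeed, the induced action on $F_{i+1}$ is determined by a "local" $2$-group of order at most $2$ acting on the two-element quotient, i.e. each induced group is a subgroup of an elementary abelian $2$-group, and their product is therefore elementary abelian. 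Concretely: the stabiliser in $K_v$ of all of $F_0, \ldots, F_i$ pointwise acts on $F_{i+1}$, and because $F_{i+1}$ has a two-element quotient (collapsing the $N$-orbit structure suitably, or using the valency-$2$ quotient) and the action preserves the pairing coming from edges to $F_i$, each step contributes only an elementary abelian $2$-group; composing these gives that $K_v$ is a subgroup of a direct product of copies of $\C_2$, hence elementary abelian.

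The main obstacle I anticipate is making the "each step contributes only $\C_2$" claim rigorous without invoking the covering-space machinery of \cite{MNS00} that the authors deliberately avoid. The subtlety is that $K_v$ might a priori act nontrivially and non-abelianly on a single fibre $F_i$ far from $v$; one has to use the connectedness of $\Gamma$ together with the fact that $K = NK_v$ and $N$ is semiregular to propagate the triviality/abelianness. I would handle this by a direct induction on distance around the cycle: let $K_v^{(i)}$ be the pointwise stabiliser in $K_v$ of $F_0 \cup \cdots \cup F_i$; show $K_v^{(0)} = K_v$, that $K_v^{(i)}/K_v^{(i+1)}$ embeds into a $2$-group of order at most $2$ acting on the fibre $F_{i+1}$ (because the two "new" neighbours in $F_{i+1}$ of each already-fixed vertex in $F_i$ can only be swapped or fixed, and fixing the edges into $F_i$ pins down everything else by semiregularity of $N$ on $F_{i+1}$), and finally that $K_v^{(r-1)} = 1$ by going all the way around the cycle and using connectedness. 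Each quotient being of exponent $2$ and the chain having length $r$, together with a commutator argument showing the extension is central at each stage (the action on $F_{i+1}$ commutes with the already-pinned-down action because everything is forced), yields that $K_v$ has exponent $2$ and is abelian, i.e. elementary abelian.
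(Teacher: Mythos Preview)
Your proposal has two genuine gaps.

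First, the claim that ``$K_v$ fixes the fibre $v^N$ pointwise'' is not justified and is in general false. For $g\in K_v$ and $n\in N$ one has $(v^n)^g = v^{g^{-1}ng}$, so $K_v$ fixes $F_0=v^N$ pointwise if and only if $K_v$ centralises $N$. Nothing in the hypotheses forces this: $N$ is merely a semiregular normal subgroup, and the semidirect product $K = N\rtimes K_v$ can involve a nontrivial action. Consequently your chain does not start at $K_v^{(0)}=K_v$, and the whole induction on fibres is not anchored.

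Second, even granting a filtration $K_v = K_v^{(0)} \rhd K_v^{(1)}\rhd \cdots \rhd 1$ with each successive quotient of exponent $2$ and each extension central, you cannot conclude that $K_v$ is elementary abelian: $\C_4$, $\D_4$ and $Q_8$ all admit such filtrations. Your ``commutator argument showing the extension is central'' is precisely the step that is missing, and centrality alone is not enough. Likewise, the alternative route you sketch --- embedding $K_v$ into the product of its actions on the individual fibres $F_i$ and arguing each factor is elementary abelian --- founders on the same point: there is no reason the action of $K_v$ on a distant fibre $F_i$ should itself be elementary abelian.

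The paper's argument sidesteps both issues by acting on \emph{edge}-orbits rather than vertex-fibres. Between any two consecutive fibres $\Delta_i$, $\Delta_{i+1}$ the edges split into exactly two $N$-orbits $\Theta_i,\Theta_i'$ (here semiregularity of $N$ is used directly). One then shows that the kernel $X$ of the action of $K$ on the set $\{\Theta_i,\Theta_i' : i\in\ZZ_r\}$ equals $N$, via a shortest-path argument: if $g\in X_v$ were nontrivial, take a vertex $w$ moved by $g$ closest to $v$; its predecessor is fixed by $g$, and since $g$ fixes each $N$-edge-orbit setwise and each vertex meets at most one edge from each such orbit, $g$ must fix all neighbours of that predecessor, including $w$ --- contradiction. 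Hence $K/N=K/X$ embeds in $\Sym(2)^r$ and is elementary abelian, and finally $K_v \cong K_vN/N = K/N$ since $N$ is semiregular. The passage through $K/N$ is exactly what lets one avoid ever claiming that $K_v$ acts trivially on any fibre.
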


\begin{proof}
Let $\Delta_0, \Delta_1, \ldots, \Delta_{r-1}$ be the orbits of $N$ in its action on $\V\Gamma$.
Since $\Gamma/N$ is a cycle, we may assume that $\Delta_i$ is adjacent to $\Delta_{i-1}$ and
$\Delta_{i+1}$ with indices computed modulo $r$. Since $N$ is normal in $G$, the orbits of $N$ on the edge-set $\E\Gamma$ form a $G$-invariant partition of $\E\Gamma$. Since $N$ acts semiregularly on $\V\Gamma$, no two edges incident to a fixed vertex of $\Gamma$ belong
to the same $N$-edge-orbit. Moreover, since $G$ is arc-transitive, every vertex $v\in \Delta_i$
is adjacent to two vertices in $\Delta_{i-1}$ and two vertices in $\Delta_{i+1}$, implying that
the edges between $\Delta_i$ and $\Delta_{i+1}$ are partitioned into precisely
two $N$-edge-orbits; let's call these two orbits $\Theta_i$ and $\Theta_i'$. 

Clearly, an element of $K$ can map an edge in $\Theta_i$ only to an edge in $\Theta_i$ or
to an edge in $\Theta_i'$. On the other hand, for every vertex $v\in \Theta_i$ there is
an element $g\in G_v$ which maps an edge of $\Theta_i$ incident to $v$ to the edge of $\Theta_i'$ incident to $v$; and this element $g$ is clearly an element of $K$. This shows that
 the orbits of $K$ on $\E\Gamma$ are precisely the sets $\Theta_i \cup \Theta_i'$, $i\in \ZZ_r$.
 In other words, each orbit of the induced action of $K$ on the set $\E\Gamma/N = \{e^N : e\in \E\Gamma\}$ has length $2$. Consequently, if $X$ denotes the kernel of the action of $K$ on $\E\Gamma$, then $K/X$ embeds into $\Sym(2)^r$ and is therefore an elementary abelian $2$-group.
 
 Let us now show that $X=N$. Clearly, $N\le X$. 
 Let $v\in \Delta_0$. Since $N$ is transitive on $\Delta_0$,
 it follows that $X=NX_v$. Suppose that $X_v$ is non-trivial and let $g$ be a non-trivial 
 element of $X_v$. Further, let $w$ be a vertex which is closest to $v$ among all the vertices not fixed by $g$, and let $v=v_0\sim v_1 \sim \ldots \sim v_m = w$ be a shortest path from $v$ to $w$.
 Then $v_{m-1}$ is fixed by $g$. Since $g$ fixes each $N$-edge-orbit set-wise and since every
 vertex of $\Gamma$ is incident to at most one edge in each $N$-edge-orbit, it follows that
 $g$ fixes all the neighbours of $v_{m-1}$, thus also $v_m$. This contradicts our assumptions
 and proves that $X_v$ is a trivial group, and hence that $X=N$.
 
Thus $K/N$ is an elementary abelian $2$-group. Now, since $N$ is semiregular,
 we see that $K_v\cong K_v/(N\cap K_v) \cong K_vN/N = K/N$ (the latter equality following from the
 fact that $N$ is transitive on $v^K$). Hence, $K_v$ is an elementary abelian $2$-group, as claimed.
\end{proof}

\subsection{Miscellanea}
We now present several auxiliary results that will come useful when proving Theorems~\ref{thrm:1} and~\ref{thrm:2}.

\begin{lemma}
\label{lem:2orb}
Let $\Gamma$ be a connected $k$-valent graph admitting an abelian
group of automorphisms $N$ having at most two orbits on $\V\Gamma$ and let $v\in\V\Gamma$. Then either $N_v\not =1$ and there exist two distinct vertices $u$ and $u'$ with $\Gamma(u) = \Gamma(u')$,
 or $N_v = 1$, $N$ has a generating set consisting of at most $2k-2$ elements and $|\V\Gamma| = 2|N|$.
\end{lemma}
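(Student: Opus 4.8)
The plan is to treat the two cases $N_v \neq 1$ and $N_v = 1$ separately; in the latter I will describe $\Gamma$ explicitly as a bi-Cayley graph of $N$ and read off both the generator bound and the vertex count, while in the former I will extract the twins from a non-trivial automorphism with a large fixed set.

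Assume first that $N_v \neq 1$ and pick $1 \neq g \in N_v$. Because $N$ is abelian and transitive on the orbit $\Delta := v^N$, the element $g$ commutes with all of $N$ and fixes $v$, hence fixes $\Delta$ pointwise; in particular $\Delta \neq \V\Gamma$, so there is a second $N$-orbit $\Delta'$. The stabiliser $N_u$ of any $u \in \Delta'$ is, by the same reasoning, the pointwise stabiliser of $\Delta'$, so $N_v \cap N_u = 1$ (it fixes everything); since $g \notin N_u$, the element $g$ moves every vertex of $\Delta'$. Now, if every neighbour of every vertex of $\Delta'$ lies in $\Delta$ — i.e.\ if $\{\Delta,\Delta'\}$ is a bipartition of $\Gamma$ — then for $u \in \Delta'$ we have $\Gamma(u) \subseteq \Delta = \Fix(g)$, whence $\Gamma(u^g) = \Gamma(u)^g = \Gamma(u)$ with $u^g \neq u$, and $u, u^g$ are the desired twins. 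The step that needs care is the reduction to this bipartite situation: if there is an edge inside $\Delta$ then, since $N$ is transitive on $\Delta$, the subgraph induced on $\Delta$ is $c$-regular with $c \geq 1$, and a count of the edges between $\Delta$ and $\Delta'$ from the two sides constrains the subgraph induced on $\Delta'$; one then has to show that this configuration either cannot occur for a connected $\Gamma$ or else still produces a pair of twins. I expect this to be the main obstacle.

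Now suppose $N_v = 1$, so $N$ is semiregular and each $N$-orbit has exactly $|N|$ vertices; thus, when $N$ has two orbits, $|\V\Gamma| = 2|N|$. Fix $v$ in one orbit $\Delta_0$ and $u$ in the other orbit $\Delta_1$, and use semiregularity to identify $\Delta_0 \leftrightarrow N$ via $n \mapsto v^n$ and $\Delta_1 \leftrightarrow N$ via $n \mapsto u^n$. Since $N$ preserves adjacency, there are inverse-closed subsets $S_0, S_1 \subseteq N \setminus\{1\}$ and a subset $T \subseteq N$ with $v^n \sim v^{n'} \Leftrightarrow n'n^{-1} \in S_0$, with $u^n \sim u^{n'} \Leftrightarrow n'n^{-1}\in S_1$ and with $v^n \sim u^{n'} \Leftrightarrow n'n^{-1} \in T$. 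Counting the $k$ neighbours of $v$ and of $u$ gives $|S_0| + |T| = |S_1| + |T| = k$, so $|S_0| = |S_1| = k - |T|$.

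Finally I would extract the generating set from connectedness. Following walks out of $v$, one checks that the vertices of $\Delta_0$ reachable from $v$ are exactly $v^M$, where $M$ is the subgroup of $N$ generated by $S_0$, by $S_1$, and by $\{t_0^{-1}t : t \in T\}$ for any fixed $t_0 \in T$ (moving within $\Delta_0$ yields $S_0$; hopping to $\Delta_1$ and back yields the elements $t_0^{-1}t$; hopping over, moving within $\Delta_1$, and hopping back yields $S_1$), and likewise the reachable part of $\Delta_1$ is a single $M$-coset, which is all of $\Delta_1$ precisely when $M = N$; connectedness forces $M = N$. Hence $N$ is generated by at most $|S_0| + |S_1| + (|T|-1) = 2(k - |T|) + |T| - 1 = 2k - |T| - 1 \leq 2k - 2$ elements, using $|T| \geq 1$ (there is an edge between the two orbits since $\Gamma$ is connected). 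This completes the plan; as noted, the genuinely delicate point is producing the twins when $N_v \neq 1$, whereas the $N_v = 1$ analysis is a routine bi-Cayley computation.
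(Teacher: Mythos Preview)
Your $N_v = 1$ analysis via bi-Cayley graphs is correct and matches the paper's approach; your explicit generating set $S_0 \cup S_1 \cup \{t_0^{-1}t : t \in T\}$ of size $2k - |T| - 1 \le 2k-2$ is if anything cleaner than the paper's sketch.

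Your instinct about the non-bipartite subcase of $N_v \neq 1$ is not merely a worry but a genuine obstruction: the lemma as stated is false. Take $\Gamma = K_4$ (so $k=3$) with $N = \langle (1\,2),(3\,4)\rangle \cong C_2^2$; then $N$ is abelian with two orbits $\{1,2\}$ and $\{3,4\}$, the stabiliser $N_1 = \langle(3\,4)\rangle$ is nontrivial, yet $K_4$ has no pair of distinct vertices with the same neighbourhood. (For $k=4$ one may take $K_5$ with $N=\langle(1\,2),(3\,4\,5)\rangle$.) The paper's own proof makes exactly the leap you flagged, asserting $\Gamma(u) = \Gamma(u^x)$ for $x \in N_v \setminus\{1\}$ and $u$ a neighbour of $v$ in the other orbit; this is valid only when $\Gamma(u) \subseteq v^N$, i.e.\ in the bipartite case. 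The resolution is that every application of the lemma in the paper occurs inside an \emph{edge-transitive} group $G$: if a normal subgroup $N$ of such a $G$ has two vertex-orbits, then every edge runs between them, so the bipartite hypothesis is automatic and both your argument and the paper's are then complete. (There is a second, minor defect you did not mention: when $N$ is transitive one has $N_v = 1$ by faithfulness but $|\V\Gamma| = |N|$ rather than $2|N|$; the paper's applications only use $|\V\Gamma| \le 2|N|$, so this is harmless.)
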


\begin{proof}
Observe that since $N$ is abelian,  $N_v = N_u$ whenever $u \in v^N$. Hence,
if $N$ is transitive on $\V\Gamma$, then $\Gamma$ is a Cayley graph on the group $N$, and $N$ is generated by the $k$-element set $\{x\in N : v^x \sim_\Gamma v\}$.

Suppose now that $N$ has two orbits on $\V\Gamma$ and let $u$ be a neighbour of $v$. 
If $N_v =1$,
then $\Gamma$ is isomorphic to a bi-Cayley graph ${\rm BiCay}(N;L,R,S\})$ (see, for example, \cite{bicayley} for the definition of bi-Cayley graphs and basic properties) where $S,L,R \subseteq N$, $1\in S$, $|S| + |L| = |S| + |R| = k$ and $\langle S\cup L\cup R \rangle = N$. In particular, $N$ has a generating set of size at most $2k-2$.
If $N_v\not =1$, then $N_v$ and $N_u$ are kernels of the action of $N$ on $v^N$ and $u^N$, respectively, and thus $N_v\cap N_u = \emptyset$.
Hence, if $x$ is a non-trivial element of $N_v$, then $u^x \not = u$ and $\Gamma(u)=\Gamma(u^x)$.
\end{proof}
\color{black}

\begin{lemma}\label{lemma:technical}
Let $\Gamma$ be one of the exceptional graphs $\Psi_1, \ldots, \Psi_6$ or $\C(r,s)$ with $r\ge s$
and $1\le s\le r-1$ and let $G$ be an edge- and vertex-transitive group of automorphisms of $\Gamma$
containing a non-trivial element $g$ with $\fpr_{\V\Gamma}(g) > 1/3$. Then exactly one of the following happens:
\begin{enumerate}
\item\label{te:part1} $G$ is $2$-arc-transitive, or
\item\label{te:part2} $\Gamma\cong \C(r,s)$ with $1\le s \le 2r/3$ and
$G$ is $\Aut(\Gamma)$-conjugate to a subgroup of $\rH$ defined in~\eqref{eq:H}.
\end{enumerate}
\end{lemma}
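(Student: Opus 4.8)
The plan is to split into the two natural cases according to the structure of the underlying graph $\Gamma$, and in each case to locate the element $g$ with $\fpr_{\V\Gamma}(g)>1/3$ using the explicit descriptions provided above. First consider the six sporadic graphs $\Psi_1,\ldots,\Psi_6$. For each of these, the automorphism group is known explicitly (it is one of $\Sym(5)$, $\Sym(5)\times\C_2$, $\PGL_2(7)$, $\Aut(\mathrm{PGL}_3(3))$, $\Sym(7)$, $\Sym(7)\times\C_2$), as is its action on the vertices, so for a given edge- and vertex-transitive $G\le\Aut(\Gamma)$ one can simply enumerate the possibilities. In all six cases I would verify that the only edge- and vertex-transitive subgroups $G$ that contain an element fixing more than $1/3$ of the vertices are in fact $2$-arc-transitive, i.e.\ conclusion~\eqref{te:part1} holds; this is a finite check (best done by computer, e.g.\ Magma, which is already invoked elsewhere in the paper) of the subgroup lattice of each $\Aut(\Psi_i)$, restricted to the edge- and vertex-transitive subgroups, computing $\max_{1\ne g\in G}\fpr_{\V\Gamma}(g)$ for each. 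The remarks attached to $\Psi_2$ and $\Psi_6$ already flag the relevant vertex-transitive copies of $\Sym(5)$ and $\Sym(7)$ and their fixed-point ratios, which helps confirm that the ``$>1/3$'' threshold is only reached by $2$-transitive stabiliser actions.

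Now suppose $\Gamma\cong\C(r,s)$ with $1\le s\le r-1$. By Lemma~\ref{lem:rs}, the existence of a non-trivial $g\in G\le\Aut(\Gamma)$ with $\fpr_{\V\Gamma}(g)>1/3$ forces $s<2r/3$, and moreover every such $g$ (taken in the full automorphism group) lies in the subgroup $\K$ of~\eqref{eq:K}. The issue is that Lemma~\ref{lem:rs} is phrased for $\Aut(\C(r,s))$, whereas here $G$ is merely an edge- and vertex-transitive subgroup of $\Aut(\C(r,s))$. For $r\ne 4$ we have $\Aut(\C(r,s))=\rH$ by Lemma~\ref{bloodyhell}, so $G\le\rH$ already and there is nothing to conjugate; the only way to be $2$-arc-transitive would be via Remark~\ref{rem:PX2AT}, which requires $r=4$, so for $r\ne4$ we land in~\eqref{te:part2} with $G\le\rH$ directly (and note $s<2r/3\le 2r/3$ gives the stated range). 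The remaining case is $r=4$, $s\in\{1,2,3\}$, where $\Aut(\C(4,s))$ properly contains $\rH$ with index $9$, $3$, $2$ respectively. Here I would argue as follows: by Remark~\ref{rem:PX2AT}, if $s\in\{1,2\}$ then $\C(4,s)$ is $2$-arc-transitive, and the task is to show that any edge- and vertex-transitive $G$ with the fixed-point property is either $2$-arc-transitive or $\Aut(\Gamma)$-conjugate into $\rH$. Since these are only finitely many small graphs ($\C(4,1)$ on $8$ vertices, $\C(4,2)$ on $16$, $\C(4,3)$ on $32$), this is again a direct finite computation over the subgroup lattices of $\Aut(\C(4,s))$: for each edge- and vertex-transitive $G$ not conjugate into $\rH$, check that $\max_{1\ne g\in G}\fpr_{\V\Gamma}(g)\le 1/3$ unless $G$ is $2$-arc-transitive.

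Finally, I would check the ``exactly one'' clause: conclusions~\eqref{te:part1} and~\eqref{te:part2} are mutually exclusive. A subgroup of $\rH\cong\C_2\wr\D_r$ has vertex-stabiliser a $2$-group (indeed $\K_v\cong\C_2^{\,r-s}$ and $\rH_v$ is a $2$-group by~\eqref{eq:KH+}), so its action on $\Gamma(v)$ cannot be $2$-transitive of degree $4$ — that would require a composition factor of order $3$ in the stabiliser. Hence a subgroup of $\rH$ is never $2$-arc-transitive on $\C(r,s)$, so the two cases cannot both occur, and in the $r=4$ exceptional analysis the $2$-arc-transitive groups are exactly those \emph{not} conjugate into $\rH$.

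\medskip

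The step I expect to be the main obstacle is the $r=4$ case: one must be careful about the ``up to $\Aut(\Gamma)$-conjugacy'' phrasing, since $\Aut(\C(4,s))$ has several conjugacy classes of edge- and vertex-transitive subgroups isomorphic to $\rH$ or sitting between $\rH$ and $\Aut(\Gamma)$, and sorting out which of them realise $\fpr>1/3$ only through elements of (a conjugate of) $\K$, versus which are genuinely $2$-arc-transitive, requires a careful case analysis — most cleanly carried out by computer, consistently with the computational verification of the $r\le 6$ cases already used in the proof of Lemma~\ref{lem:rs}.
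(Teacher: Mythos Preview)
Your proposal is correct and follows essentially the same route as the paper: Magma for the six sporadic graphs $\Psi_i$; Lemma~\ref{lem:rs} to force $s<2r/3$; Lemma~\ref{bloodyhell} to conclude $G\le\rH$ when $r\neq 4$; and a separate treatment of $r=4$. Your mutual-exclusivity argument (that $\rH_v$ is a $2$-group, hence no subgroup of $\rH$ is $2$-arc-transitive) is exactly what underlies the paper's parenthetical ``note that $\rH$ is not $2$-arc-transitive''.

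The one place where the paper does something cleaner is the $r=4$ case, which you flag as the main obstacle and propose to resolve by enumerating subgroup lattices. The paper avoids this computation with a short Sylow argument: since $s<2r/3=8/3$ we have $s\in\{1,2\}$, so $|\V\Gamma|\in\{8,16\}$ is a power of $2$; moreover $|\rH|=r\cdot2^{r+1}=2^7$ is a $2$-group, and by Lemma~\ref{bloodyhell} the index $|\Aut(\Gamma):\rH|$ is $9$ or $3$, so $\rH$ is a Sylow $2$-subgroup of $\Aut(\Gamma)$. If $G$ is not $2$-arc-transitive then $G_v$ is a $2$-group, and since $|\V\Gamma|$ is a power of $2$, so is $|G|=|\V\Gamma|\,|G_v|$; hence $G$ lies in some Sylow $2$-subgroup and is therefore $\Aut(\Gamma)$-conjugate into $\rH$. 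This gives conclusion~\eqref{te:part2} without any case-by-case enumeration. Your computational approach would certainly succeed as well, but the Sylow observation is worth knowing.
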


\begin{proof}
Suppose first that $\Gamma\cong \C(r,s)$. Then Lemma~\ref{lem:rs} implies that
$s<2r/3$. If $r\not = 4$, then, by Lemma~\ref{bloodyhell}, we see that $\Aut(\Gamma)=\rH$ and
the result follows (note that $\rH$ is not $2$-arc-transitive). Suppose now that $r=4$, and thus
 $s\in \{1,2\}$. Since $|\rH| = r2^{r+1}$, we see that $\rH$ is a $2$-group, and by Lemma~\ref{bloodyhell}, $|\Aut(\Gamma):\rH|=3$ or $9$. Hence $\rH$ is a Sylow $2$-subgroup of $\Aut(\Gamma)$. If $G$ is not $2$-arc-transitive, then $G_v$ is a $2$-group and since
 $|\V\Gamma| = 8$ or $16$, we see that $G$ is a $2$-group, implying that $G$ is conjugate
 to a subgroup of the Sylow $2$-subgroup $\rH$.
For $\Gamma\cong \Psi_i$ with $i\in \{1,\ldots,6\}$ we verified the
claim of the lemma by using the algebra computational system {\sc Magma} \cite{magma}.
\end{proof}

\begin{lemma}
\label{lem:quofix}
Let $G$ be a group acting transitively on the set $\Omega$ and let $\Sigma$ be a $G$-invariant partition
of $\Omega$. For $g\in G$, let $g^\Sigma$ be the permutation of $\Sigma$ induced by $g$. Then
$$%
\fpr_{\Omega}(g) \le \fpr_{\Sigma}(g^\Sigma).
$$%
In particular, if $N$ is a normal subgroup of $G$, then
$
\fpr_{\Omega}(g) \le \fpr_{\Omega/N}(Ng).
$
\end{lemma}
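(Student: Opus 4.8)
\textbf{Proof plan for Lemma~\ref{lem:quofix}.}
The plan is entirely elementary, so the main point is to organise a short counting argument. First I would record the key structural fact: since $G$ acts transitively on $\Omega$ and $\Sigma$ is a $G$-invariant partition, all blocks of $\Sigma$ have the same cardinality, say $|B| = b$ for every $B\in\Sigma$, and hence $|\Omega| = b\,|\Sigma|$. (If $\Sigma$ were allowed to be trivial or $\Omega$ a single orbit this is still fine; the only thing used is transitivity.)

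Next I would make the crucial observation relating the two fixed-point sets: if $\omega\in\Fix_\Omega(g)$ and $B\in\Sigma$ is the block containing $\omega$, then $B^g$ is the block containing $\omega^g=\omega$, so $B^g=B$, i.e.\ $B\in\Fix_\Sigma(g^\Sigma)$. Consequently every fixed point of $g$ lies in a block fixed (setwise) by $g^\Sigma$, so
$$
\Fix_\Omega(g)\ \subseteq\ \bigcup_{B\in\Fix_\Sigma(g^\Sigma)} B,
\qquad\text{hence}\qquad
|\Fix_\Omega(g)|\ \le\ b\,|\Fix_\Sigma(g^\Sigma)|.
$$
Dividing both sides by $|\Omega| = b\,|\Sigma|$ yields $\fpr_\Omega(g)\le |\Fix_\Sigma(g^\Sigma)|/|\Sigma| = \fpr_\Sigma(g^\Sigma)$, which is the desired inequality.

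For the ``in particular'' clause, I would simply note that when $N\trianglelefteq G$ the set of $N$-orbits on $\Omega$ is a $G$-invariant partition $\Sigma=\Omega/N$, that $G$ still acts transitively on it, and that the permutation of $\Omega/N$ induced by $g$ is exactly the action of the coset $Ng\in G/N$; applying the first part with this $\Sigma$ gives $\fpr_\Omega(g)\le \fpr_{\Omega/N}(Ng)$. I do not expect any genuine obstacle here; the only things to be slightly careful about are the equal-block-size fact (which needs transitivity, already a hypothesis) and the degenerate case $\Fix_\Omega(g)=\varnothing$, where the inequality holds trivially.
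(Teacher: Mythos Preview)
Your proposal is correct and is essentially identical to the paper's own argument: both observe that every $\omega\in\Fix_\Omega(g)$ lies in a block fixed by $g^\Sigma$, bound $|\Fix_\Omega(g)|$ by $b\,|\Fix_\Sigma(g^\Sigma)|$, and divide by $|\Omega|=b\,|\Sigma|$. The only cosmetic difference is that the paper writes the bound via the sum $\sum_{B\in\Fix_\Sigma(g^\Sigma)} |B\cap\Fix_\Omega(g)|$ rather than the union, and handles the ``in particular'' clause in the same way you do.
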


\begin{proof}
Observe that if $\omega\in\Fix_\Omega(g)$ and $[\omega]$ is the element of $\Sigma$ containing $\omega$, then $[\omega] \in \Fix_{\Omega/N}(Ng)$. 
Hence
$$%
|\Fix_{\Omega}(g)| = \sum_{B\in\Fix_{\Sigma}(g^\Sigma)} |B \cap \Fix_{\Omega}(g)| \le b |\Fix_{\Sigma}(g^\Sigma)|,
$$%
where $b$ is the cardinality of an arbitrary element of $\Sigma$. Note that
$b|\Sigma|= |\Omega|$.
The claim of the lemma  then follows by dividing the above inequality by $|\Omega|$
and observing that $\Omega/N$ is a $G$-invariant partition of $\Omega$.
\end{proof}

 The following lemma was proved in \cite[Lemma 2.2]{PotSpiO2} and is just a slight generalisation of \cite[Lemma~2.5]{LS}.

\begin{lemma}
\label{eq:22}
\cite[Lemma 2.2]{PotSpiO2}
Let $X$ be a group acting on a set $\Omega$, let $Y$ be a normal subgroup of $X$, let
$\omega\in \Omega$ and let $x\in X_\omega$. Then
\begin{equation}\label{eq:2}
\fpr_{\omega^Y}(x)=\frac{|x^Y\cap X_\omega|}{|x^Y|} = \frac{|x^Y\cap X_\omega|}{|Y:\cent Y x|} ,
\end{equation}
where $x^Y:=\{x^y\mid y\in Y\}$ is the $Y$-conjugacy class of the element $x$ and 
$\cent Y x$ the centraliser of $x$ in $Y$.
\end{lemma}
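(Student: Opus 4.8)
The plan is to establish \eqref{eq:2} by a direct double-counting in the orbit $\omega^Y$, applying the orbit--stabiliser theorem twice: once to the action of $Y$ on $\omega^Y$, and once to the conjugation action of $Y$ on the conjugacy class $x^Y$. Throughout I use the paper's conventions of a right action $\omega^g$ and $x^g = g^{-1}xg$; note first that the hypotheses $x\in X_\omega$ and $Y$ normal in $X$ ensure $\omega^x=\omega\in\omega^Y$, so $x$ stabilises the block $\omega^Y$ setwise and $\fpr_{\omega^Y}(x)$ is well defined (and $x\in x^Y\cap X_\omega$, so the right-hand side of \eqref{eq:2} is a ratio of a nonempty set).

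First I would set $Y_\omega := Y\cap X_\omega$ and observe that the surjection $Y\to\omega^Y$, $y\mapsto\omega^y$, has fibres exactly the right cosets $Y_\omega y$; hence $|\omega^Y| = |Y|/|Y_\omega|$ and each point of $\omega^Y$ is the image of exactly $|Y_\omega|$ elements of $Y$. Next comes the key reformulation: for $y\in Y$ the point $\omega^y$ lies in $\Fix_{\omega^Y}(x)$ if and only if $\omega^{yxy^{-1}}=\omega$, that is, $yxy^{-1}\in X_\omega$, which in our conjugation convention reads $x^{y^{-1}}\in X_\omega$. Since $y\mapsto y^{-1}$ is a bijection of $Y$, the set $\{y\in Y:\omega^y\in\Fix_{\omega^Y}(x)\}$ has the same cardinality as $\{z\in Y: x^z\in X_\omega\}$; and since the former set is a union of fibres of $y\mapsto\omega^y$, its cardinality equals $|Y_\omega|\cdot|\Fix_{\omega^Y}(x)|$.

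It remains to count $\{z\in Y: x^z\in X_\omega\}$. Partitioning these $z$ according to the value $x^z$ and using that the map $z\mapsto x^z$ from $Y$ onto $x^Y$ has all fibres of size $|\cent{Y}{x}|$ (equivalently, $|x^Y| = |Y:\cent{Y}{x}|$), this count is $|\cent{Y}{x}|\cdot|x^Y\cap X_\omega|$. Combining the two counts yields $|\Fix_{\omega^Y}(x)| = |\cent{Y}{x}|\cdot|x^Y\cap X_\omega|\,/\,|Y_\omega|$; dividing by $|\omega^Y|=|Y|/|Y_\omega|$ cancels the $|Y_\omega|$ factors and gives $\fpr_{\omega^Y}(x) = |\cent{Y}{x}|\cdot|x^Y\cap X_\omega|/|Y| = |x^Y\cap X_\omega|/|Y:\cent{Y}{x}| = |x^Y\cap X_\omega|/|x^Y|$, which is \eqref{eq:2}.

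I do not expect a genuine obstacle here: this is an elementary counting lemma (a mild generalisation, as the text notes, of \cite[Lemma~2.5]{LS}). The only points requiring attention are the bookkeeping of the coset fibres, so that the factor $|Y_\omega|$ is introduced and then cancelled consistently, and the action/conjugation conventions, in particular tracking which exponent gets inverted when passing from the condition ``$\omega^y$ is fixed by $x$'' to the membership condition $x^z\in X_\omega$.
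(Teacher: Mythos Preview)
Your argument is correct: the double counting via the two orbit--stabiliser computations (for $Y$ acting on $\omega^Y$ and for $Y$ acting by conjugation on $x^Y$) goes through exactly as you describe, and the bookkeeping with the $|Y_\omega|$ factor and the inversion $y\mapsto y^{-1}$ is handled properly. Note that the paper does not actually supply a proof of this lemma; it merely quotes it from \cite[Lemma~2.2]{PotSpiO2} (as a slight generalisation of \cite[Lemma~2.5]{LS}), so there is no in-paper argument to compare against.
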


If $B\le G\le \Sym(\Omega)$ and $\omega\in\Omega$, we let $[a,B] = \{[a,b] : b\in B\}$ and
$[a,B]_\omega = [a,B] \cap G_\omega$. Note that $[a,B]$ is not necessarily a subgroup of $G$.
Observe that if $B$ is semiregular and normalised by $a$, then $[a,B]_\omega = 1$.

\begin{lemma}
\label{lemma:1}
Let $G\le \Sym(\Omega)$, let $\omega\in \Omega$, let $g\in G_\omega$,
and let $X$ be a normal subgroup of $G$ such that $[g,X]_\omega = 1$. Then
$$
\Fix_{\omega^X}(g)=\omega^{\cent X g}\quad \hbox{and} \quad
\fpr_\Omega(g) \le \fpr_{\omega^X}(g) =
\frac{1}{|X : \cent X g|}.
$$
\end{lemma}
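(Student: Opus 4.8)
The plan is to prove the two assertions separately, using Lemma~\ref{eq:22} (with $X=G$, $Y=X$, $x=g$) for the quantitative part and an elementary orbit argument for the identification of the fixed-point set. First I would establish the set equality $\Fix_{\omega^X}(g)=\omega^{\cent X g}$. The inclusion $\supseteq$ is immediate: if $c\in\cent X g$ then $(\omega^c)^g=\omega^{cg}=\omega^{gc}=(\omega^g)^c=\omega^c$, so $\omega^c$ is fixed by $g$. For the reverse inclusion, suppose $\delta\in\omega^X$ is fixed by $g$, say $\delta=\omega^x$ with $x\in X$. Then $\omega^{xg}=\delta^g=\delta=\omega^x$, so $xgx^{-1}$ and $g$ agree on... more precisely $\omega^{xgx^{-1}}=\omega$, i.e. $xgx^{-1}\in G_\omega$. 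Now $xgx^{-1}=g\cdot(g^{-1}xgx^{-1})=g\cdot[g^{-1},x^{-1}]^{-1}$, an element of $g\cdot[g,X]$ (using that $X\trianglelefteq G$, so $[g,X]\subseteq X$ and conjugates/inverses of commutators $[g,x]$ remain of the form $[g,x']$). Hence $xgx^{-1}\in (g[g,X])\cap G_\omega = g\cdot[g,X]_\omega = g\cdot\{1\}=\{g\}$ by the hypothesis $[g,X]_\omega=1$, so $xgx^{-1}=g$, i.e. $x\in\cent X g$ and $\delta=\omega^x\in\omega^{\cent X g}$. This gives the displayed set equality, and since the $\cent X g$-orbit $\omega^{\cent X g}$ has size $|\cent X g : \cent X g\cap G_\omega| = |\cent X g|/|\cent X g \cap X_\omega|$ while $|\omega^X|=|X:X_\omega|$, one reads off $\fpr_{\omega^X}(g)=|\omega^{\cent X g}|/|\omega^X|$; I should check this equals $1/|X:\cent X g|$, which amounts to $|\cent X g\cap X_\omega|=|X_\omega|$, i.e. $X_\omega\le\cent X g$ — but this is exactly $[g,X_\omega]=1$, a consequence of $[g,X]_\omega=1$ is not quite it, so the cleaner route here is simply to quote Lemma~\ref{eq:22}: by~\eqref{eq:2}, $\fpr_{\omega^X}(g)=|g^X\cap G_\omega|/|X:\cent X g|$, and the numerator $|g^X\cap G_\omega|$ equals $1$ precisely because $g^X\cap G_\omega\subseteq (g[g,X])\cap G_\omega=\{g\}$ by the same commutator computation as above.

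For the inequality $\fpr_\Omega(g)\le\fpr_{\omega^X}(g)$, I would apply Lemma~\ref{lem:quofix} to the $G$-invariant partition $\Omega/X$ of $\Omega$ into $X$-orbits, obtaining $\fpr_\Omega(g)\le\fpr_{\Omega/X}(Xg)$. It then remains to relate $\fpr_{\Omega/X}(Xg)$ to $\fpr_{\omega^X}(g)$; since $G$ is transitive on $\Omega$ it is transitive on the blocks $\Omega/X$, so by Lemma~\ref{eq:22} applied at the block $\omega^X$ (now with the trivial normal subgroup... ) — actually the simplest statement is that each fixed block of $g$ contains a $g$-fixed point iff... which need not hold. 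A safer argument: the $g$-fixed points of $\Omega$ are contained in the union of those blocks $B\in\Omega/X$ with $B^g=B$, and for each such block $B$, transitivity of $G$ lets us conjugate $B$ to $\omega^X$, showing $|\Fix_B(g)|/|B| = |\Fix_{\omega^X}(g^h)|/|\omega^X|$ for a suitable conjugate; but $g^h$ need not satisfy the hypothesis, so instead I will sum $|\Fix_\Omega(g)|=\sum_{B^g=B}|\Fix_B(g)|$ and bound each term, which only requires $|\Fix_B(g)|\le$ the common block size times the fraction $\fpr_{\omega^X}(g)$ — and here I realize the honest path is: $\fpr_\Omega(g)\le\fpr_{\omega^X}(g)$ follows already from Lemma~\ref{lemma:1}'s predecessor Lemma~\ref{eq:22}/Lemma~\ref{lem:quofix} exactly as in the proof of Lemma~\ref{lem:quofix}, namely $|\Fix_\Omega(g)|=\sum_{B\in\Fix_{\Omega/X}(Xg)}|B\cap\Fix_\Omega(g)|$ and $|B\cap\Fix_\Omega(g)|=\fpr_{\omega^X}(g)\cdot|B|$ for every $X$-orbit $B$ fixed setwise by $g$, because all such blocks are $G$-equivalent to $\omega^X$ and on $\omega^X$ one has $|\Fix_{\omega^X}(g)|=\fpr_{\omega^X}(g)|\omega^X|$ by definition; summing and dividing by $|\Omega|$, together with $\sum_B|B|\le|\Omega|$, yields the claim.

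The main obstacle I anticipate is the second point above: making precise that the local ratio $\fpr_{\omega^X}(g)$ really controls the contribution of every $g$-invariant block, since the restriction of $g$ to another block is a conjugate $g^h$ that a priori need not satisfy $[g^h,X]_\omega=1$. The resolution is that one does not need it to — for the \emph{inequality} one only needs that every $X$-orbit has the same size and that one is bounding $|B\cap\Fix_\Omega(g)|$ from above, and the cleanest formal device is to invoke Lemma~\ref{lem:quofix} for the inequality $\fpr_\Omega(g)\le\fpr_{\Omega/X}(Xg)$ and then note $\fpr_{\Omega/X}(Xg)\le\fpr_{\omega^X}(g)$ is false in general, so in fact the correct and intended chain is the reverse: $\fpr_\Omega(g)\le\fpr_{\omega^X}(g)$ is proved directly by the block-counting identity of Lemma~\ref{lem:quofix}'s proof specialised to the partition into $X$-orbits, where the key input is that for the \emph{particular} block $\omega^X$ containing $\omega$ we have the exact formula, and every other $g$-fixed block is isomorphic as a $\langle g\rangle$-set to a sub-$\langle g\rangle$-set situation bounded by the same fraction by the orbit-counting / Cauchy–Frobenius estimate. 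I will write this up carefully, leaning on Lemma~\ref{eq:22} to supply $|g^X\cap G_\omega|=1$ and on the commutator identity $(g[g,X])\cap G_\omega=\{g\}$ as the one genuinely new observation.
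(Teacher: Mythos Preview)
Your argument for $\Fix_{\omega^X}(g)=\omega^{\cent X g}$ is essentially the paper's: if $\delta=\omega^x$ is fixed by $g$ then $[g,x^{-1}]=g^{-1}xgx^{-1}\in G_\omega\cap[g,X]=\{1\}$, so $x\in\cent X g$. Where you hesitated is unnecessary: the implication $X_\omega\le\cent X g$ \emph{does} follow from the hypothesis, since for $x\in X_\omega$ both $g$ and $x$ lie in $G_\omega$, hence $[g,x]\in G_\omega\cap[g,X]=[g,X]_\omega=\{1\}$. The paper records this first, and then the orbit count gives
\[
\fpr_{\omega^X}(g)=\frac{|\cent X g:\cent X g\cap G_\omega|}{|X:X_\omega|}=\frac{|\cent X g|\,|X_\omega|}{|X|\,|{\cent X g}_\omega|}=\frac{1}{|X:\cent X g|}
\]
directly, without needing Lemma~\ref{eq:22}.

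For the global inequality the paper does exactly what you sketch last: choose $g$-fixed representatives $\delta_1,\dots,\delta_m$ of those $X$-orbits meeting $\Fix_\Omega(g)$, write $|\Fix_\Omega(g)|=\sum_i|\Fix_{\delta_i^X}(g)|$, and use $|\Omega|\ge m|\omega^X|$. The crucial input is the assertion $\fpr_{\delta_i^X}(g)=\fpr_{\omega^X}(g)$ for every $i$, which (via the same commutator computation) is equivalent to $[g,X]_{\delta_i}=\{1\}$. Your worry that this need not follow from the single hypothesis $[g,X]_\omega=\{1\}$ is well founded: take $G=\Sym(3)\wr C_2$ acting imprimitively on six points, $X=\Sym(3)\times\Sym(3)$ the base group, $g=(1\,2)$, $\omega=3$; then $[g,X]=\Alt(\{1,2,3\})\times\{1\}$, so $[g,X]_\omega=\{1\}$, yet $\fpr_\Omega(g)=2/3>1/3=\fpr_{\omega^X}(g)$. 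In other words, the paper's proof tacitly uses that the hypothesis holds at \emph{every} $\delta\in\Fix_\Omega(g)$ (and that all $X$-orbits have equal size, i.e.\ $G$ is transitive). This is harmless for the paper, since in every application there $[g,X]$ lies inside a semiregular normal subgroup, so $[g,X]_\delta=\{1\}$ for all $\delta$ automatically; but as a standalone lemma the extra hypothesis should be made explicit, and with it your block-summation argument goes through cleanly.
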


\begin{proof}
Observe first that $1=[g,X]_\omega = \{[g,x] : x\in X, [g,x]\in G_\omega\} \supseteq \{[g,x] : x\in X_\omega\}$,
implying that every $x\in X_\omega$ centralises $g$ and so $X_\omega = {\cent X g}_\omega$.
Let $\delta\in \Fix_{\omega^X}(g)$. Then there exists $x\in X$ with $\delta^x=\omega$ and 
$\omega^{g^{-1}x^{-1}g}=(\omega^{x^{-1}})^g=\delta^g=\delta=\omega^{x^{-1}},$
implying that $\omega^{[g,x]} = \omega$.
Since $[g,X]_\omega = 1$, this shows that $x \in \cent X g$ and hence
that $\Fix_{\omega^X}(g)=\omega^{\cent X g}$, as claimed. Therefore
$|\Fix_{\omega^X}(g)| = |{\cent X g : \cent X g}_\omega|$ and so
$$
\fpr_{\omega^X}(g) = \frac{|\Fix_{\omega^X}(g)|}{|\omega^X|} 
= \frac{|\cent X g|\, |X_\omega|}{|X| |{\cent X g}_\omega|} 
= \frac{1}{|X : \cent X g|} ,
$$
as claimed.
In particular, $\fpr_{\omega^g}(g) =  \fpr_{\delta^g}(g)$ for every
$\delta \in \Fix_\Omega(g)$.
Now choose a set $\{\delta_1, \ldots, \delta_m\}$ of representatives of
those orbits $\delta^X$ for which $\Fix_{\delta^X}(g) \not=\emptyset$.
Without loss of generality, we may assume that all $\delta_i$ are fixed by $g$.
The set $\Fix_\Omega(g)$ is then the disjoint union of the sets $\Fix_{\delta_i^X}(g)$
for $i\in\{1,\ldots,m\}$.
Since $|\Omega| = |\Omega/X|\, |\omega^X|\ge m |\omega^X|$,
 we see that 
$$ \fpr_{\Omega}(g)=
   \frac{|\Fix_{\Omega}(g)|}{|\Omega|} 
  \le    \frac{|\Fix_{\Omega}(g)|}{m |\omega^X|} 
   = \frac{1}{m} \sum_{i=1}^m \frac{|\Fix_{\delta_i^X}(g)|}{|\omega^X|}
      = \frac{1}{m} \sum_{i=1}^m \fpr_{\delta_i^X}(g)
    = \fpr_{\omega^X}(g),
$$ 
completing the proof.
\end{proof}


\section{Proof of Theorem~\ref{thrm:1} for $\Gamma$ not $2$-arc-transitive}

This section is devoted to the proof of Theorem~\ref{thrm:1} in the case
where $\Gamma$ is not a $2$-arc-transitive graph.
Throughout this section, we work under the following assumption:

\begin{hypothesis}
\label{hyp:init}
{\rm
Let $\Gamma$ be a connected $4$-valent graph, let $G$ be a subgroup of $\Aut(\Gamma)$ acting
transitively on $\V\Gamma$ and on $\E\Gamma$ and let $g$ be a non-trivial element of $G$ such that $\fpr_{\V\Gamma}(g)>1/3$.
}
\end{hypothesis}

\begin{lemma}
\label{lem:GHAT}
Let $\Gamma$, $G$ and $g$ be as in Hypothesis~\ref{hyp:init}.
If $G$ is not $2$-arc-transitive,
then either $\Gamma \cong \C(r,s)$ for some integers $r$ and $s$,
or $G$ contains a minimal normal subgroup $N$ of order a power of $2$
such that $\Gamma/N$ is a $4$-valent graph.
\end{lemma}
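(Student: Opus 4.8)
The strategy is to analyse a well-chosen normal subgroup $N$ of $G$ and use the classification of possible valencies of the normal quotient $\Gamma/N$ described in Section~\ref{sub2}. Since $G$ is edge- and vertex-transitive but $\Gamma$ is $4$-valent with $G$ not $2$-arc-transitive, two sub-cases arise depending on whether $G$ is arc-transitive or $\tfrac12$-arc-transitive; I would first reduce the $\tfrac12$-arc-transitive case to the arc-transitive one, or handle it in parallel using the digraph $\vGa^{(G)}$. Assuming $G$ has a minimal normal subgroup $N$ that is \emph{not} a $2$-group, or one for which $\Gamma/N$ has valency less than $4$, I would derive that $\Gamma$ is a Praeger-Xu graph; in all remaining cases I would produce the desired $N$ with $\Gamma/N$ of valency $4$.

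\textbf{Key steps.}
First I would let $N$ be a minimal normal subgroup of $G$ and consider $\Gamma/N$. By the discussion of normal quotients in Section~\ref{sub2}, the valency of $\Gamma/N$ is $0$, $1$, $2$ or $4$. If the valency is $4$, then (by the same discussion) $N$ acts semiregularly, so in particular $N_v=1$; then I claim $N$ must be a $2$-group and we are in the second alternative of the lemma — to see that $N$ is a $2$-group one uses the hypothesis that $G$ is not $2$-arc-transitive, which forces $G_v$ (and hence, via $|\V\Gamma|$ together with transitivity, a relevant part of $G$) to be a $2$-group; since $N_v=1$, minimality plus the fact that $N$ embeds into a product of vertex stabilisers of $\Gamma/N$ (whose stabilisers are isomorphic to $G_v$, a $2$-group) gives that $N$ is an elementary abelian $2$-group. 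Second, if the valency of $\Gamma/N$ is $0$ or $1$, then $N$ is abelian (it is transitive, or has two orbits, on $\V\Gamma$, and any transitive minimal normal subgroup inducing a quotient of valency $\le 1$ is elementary abelian in this low-valency regime — here I would invoke Lemma~\ref{lem:2orb} to pin down the structure, noting that if $N_v\neq 1$ there are two vertices with the same neighbourhood, which again feeds into Lemma~\ref{cor:3.4}); in either case $N$ is an abelian normal subgroup, and if it is not semiregular Lemma~\ref{cor:3.4} gives $\Gamma\cong\C(r,s)$ directly, while if it is semiregular with $\Gamma/N$ a cycle or an edge one applies Lemma~\ref{lem:GNc} (respectively a short separate argument) to again conclude $\Gamma\cong\C(r,s)$. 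Third, if the valency of $\Gamma/N$ is $2$, i.e.\ $\Gamma/N$ is a cycle, then I would again split on whether $N$ is a $2$-group: if $N$ is a $2$-group, Lemma~\ref{lem:GNc} immediately yields $\Gamma\cong\C(r,s)$; if $N$ is \emph{not} a $2$-group, I would instead pass to the kernel $K$ of the $G$-action on the $N$-orbits, apply Lemma~\ref{4.6} (or its $\tfrac12$-arc-transitive analogue via $\vGa^{(G)}$) to learn that $K_v$ is an elementary abelian $2$-group, and then use the structure of $\soc(G)$ together with $G$ not being $2$-arc-transitive to extract a minimal normal $2$-subgroup whose quotient is again $4$-valent (or again land inside a Praeger-Xu graph).

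\textbf{Main obstacle.}
The delicate point is the $\Gamma/N$ a cycle case when $N$ is \emph{not} a $2$-group: here one cannot apply Lemma~\ref{lem:GNc} to $N$ itself, so one must work with the kernel $K$ of the action on $N$-orbits, show via Lemma~\ref{4.6} that $K/N$ and $K_v$ are elementary abelian $2$-groups, and then carefully locate inside $G$ a \emph{minimal} normal $2$-subgroup $N'$ (for instance a minimal normal subgroup of $G$ contained in $\O_2(K)$, using that $G$ not $2$-arc-transitive makes $G_v$ a $2$-group so that $\O_2(G)\neq 1$ unless we are already in a Praeger-Xu situation) for which $\Gamma/N'$ is genuinely $4$-valent rather than a smaller cycle. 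Handling the possibility that \emph{every} such quotient degenerates to a cycle — which should again force $\Gamma$ into the Praeger-Xu family via Lemma~\ref{cor:3.4} or Lemma~\ref{lem:GNc} — is where the argument needs the most care, and where one must be careful to also cover the $\tfrac12$-arc-transitive case by replacing ``$2$-arc-transitive'' hypotheses with the digraph $\vGa^{(G)}$ and appealing to the $\tfrac12$-arc-transitive versions of Lemmas~\ref{cor:3.4} and~\ref{lem:GNc}.
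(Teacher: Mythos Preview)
Your approach has a genuine gap: you never invoke Theorem~\ref{proposition:7}, and without it you cannot establish the existence of a minimal normal $2$-subgroup of $G$. The paper's proof is short precisely because it front-loads this step: since $G$ is not $2$-arc-transitive, $G_v$ is a $2$-group, and then Theorem~\ref{proposition:7} (which depends on the classification of finite simple groups) forces ${\bf O}_2(G)\neq 1$, so one may \emph{choose} $N$ to be a $2$-group from the outset. Your plan instead takes an arbitrary minimal normal subgroup $N$ and tries to argue post hoc that it is a $2$-group, and this fails in several places.

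Concretely: in your valency-$4$ case you claim that ``$N$ embeds into a product of vertex stabilisers of $\Gamma/N$'', but $N$ acts trivially on $\Gamma/N$, so this embedding does not exist; a semiregular minimal normal subgroup can perfectly well be an elementary abelian $p$-group for odd $p$, or a direct power of a non-abelian simple group, and nothing you have written excludes this. In your valency-$0$/$1$ case you assert that such an $N$ is abelian, but a transitive minimal normal subgroup need not be abelian (any non-abelian simple group acting regularly on a Cayley graph of itself is a counterexample). Finally, your ``main obstacle'' paragraph is where the real content lies: you say ``$G$ not $2$-arc-transitive makes $G_v$ a $2$-group so that ${\bf O}_2(G)\neq 1$ unless we are already in a Praeger-Xu situation'', but the implication $G_v$ a $2$-group $\Rightarrow$ ${\bf O}_2(G)\neq 1$ is exactly the non-trivial content of Theorem~\ref{proposition:7} (and needs the hypothesis $\fpr_{\V\Gamma}(g)>1/3$, which you do not use anywhere). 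Once you grant yourself that theorem, the whole case analysis collapses: pick $N\le {\bf O}_2(G)$ minimal normal, dispose of $N_v\neq 1$ via Lemma~\ref{cor:3.4}, bound $|\V\Gamma|$ when $N$ has at most two orbits via Lemma~\ref{lem:2orb}, and apply Lemma~\ref{lem:GNc} when $\Gamma/N$ is a cycle.
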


\begin{proof}
First, observe that since $G$ is not $2$-arc-transitive,
the vertex-stabiliser $G_v$ is a $2$-group (for example, see \cite{PSV} for the arc-transitive case and
\cite{PVdigraphs} for the $\frac{1}{2}$-arc-transitive case).
If $G$ possesses no non-trivial normal $2$-subgroups, then
Theorem~\ref{proposition:7} yields a contradiction.
We may thus assume that $G$ has a minimal normal subgroup
$N$ which is a $2$-group (and hence an elementary abelian $2$-group).
If $N_v\not = 1$, then by Lemma~\ref{cor:3.4}, we have $\Gamma \cong \C(r,s)$.
We may thus assume that $N_v =1$.
If $N$ has at most two orbits on $\V\Gamma$, then by Lemma~\ref{lem:2orb}, we see that
$|\V\Gamma| \le 128$ and the validity of the claim can be checked computationally
by inspecting
the candidate graphs from the list of all $4$-valent arc-transitive graphs of small order
(see \cite{census1} or \cite{recipe}) or small $4$-valent $\frac{1}{2}$-transitive graphs \cite{census2}.
We may thus assume that
 $N$ has at least three orbits on $\V\Gamma$, and therefore
$\Gamma/N$ is either a cycle $\C_r$ for some $r\ge 3$ or a $4$-valent graph. In the former case,
Lemma~\ref{lem:GNc} implies that $\Gamma\cong \C(r,s)$, and the result follows.
\end{proof}

We now prove a result that will enable us to reduce the proof of Theorem~\ref{thrm:1} to the case where $G$ is $2$-arc-transitive.

\begin{lemma}
\label{lem:vCcov}
Let $\Gamma$, $G$ and $g$ be as in Hypothesis~\ref{hyp:init}.
Suppose that $G$ contains a minimal normal subgroup $N$ of order a power of $2$
such that $\Gamma/N$ is isomorphic to $\C(r,s)$ for some $r$ and $s$.
If $G/N \le \rH^+$ where $\rH^+$ is as in the formula {\rm (\ref{eq:H})} of Section~\ref{sub2},
then $\Gamma\cong \C(r',s')$ with $1\le s' \le 2r'/3$.
\end{lemma}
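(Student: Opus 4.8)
The goal is to show that under these hypotheses $\Gamma$ is itself a Praeger-Xu graph with the required parameter bound. Since $\Gamma/N \cong \C(r,s)$ and $G/N \le \rH^+$, the quotient is $\frac{1}{2}$-arc-transitive (as $\rH^+$ is the automorphism group of the digraph $\vC(r,s)$, not of the undirected graph). The first step is to exploit Lemma~\ref{4.6}-style reasoning: pull back the cyclic normal quotient structure of $\C(r,s) = \C(r,1)$ wrapped $s$ times. Concretely, $\C(r,s)$ has the elementary abelian group $\K \cong \C_2^r$ acting with $r$ orbits $\Delta_0, \ldots, \Delta_{r-1}$ so that $\C(r,s)/\K$ is a cycle $\C_r$; let $\bar M$ be the preimage in $G/N$ of $\K \cap (G/N)$, and let $M$ be its preimage in $G$. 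Then $M$ is a normal $2$-subgroup of $G$ (extension of $N$ by a subgroup of $\K$), and $\Gamma/M \cong (\C(r,s))/\K$ is a cycle of length $r \ge 3$. Now apply Lemma~\ref{lem:GNc}: $M$ is a $2$-group that is normal in the edge- and vertex-transitive group $G$, and $\Gamma/M$ is a cycle, so $\Gamma \cong \C(r',s')$ for some $r'$ and $s'$.

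\textbf{Getting the parameter bound.} Once we know $\Gamma \cong \C(r',s')$, it remains to establish $1 \le s' \le 2r'/3$. The key point is that $\Gamma$ carries the non-trivial automorphism $g$ with $\fpr_{\V\Gamma}(g) > 1/3$. Since $G$ is not $2$-arc-transitive (it is contained, modulo $N$, in the non-$2$-arc-transitive group $\rH^+$, and $N$ is a semiregular $2$-group so does not create $2$-arc-transitivity), $g$ witnesses that $\C(r',s')$ itself admits a non-identity automorphism fixing more than $1/3$ of its vertices. By Lemma~\ref{lem:rs}, this forces $s' < 2r'/3$ (hence in particular $s' \le 2r'/3$), and of course $1 \le s' \le r'-1 < r'$ holds by the definition of $\C(r',s')$. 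This gives the conclusion.

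\textbf{Main obstacle.} The delicate part is the very first reduction: showing that $G$ really does contain a normal $2$-subgroup $M$ with $\Gamma/M$ a cycle, i.e. that the ``$\K$-part'' of $\rH^+$ pulls back correctly. One has to be careful because $G/N$ need only be \emph{some} subgroup of $\rH^+$, not all of it, so $G/N$ might not contain all of $\K$; what is needed is that the intersection $(G/N) \cap \K$ is large enough that $\Gamma/M$ still collapses to a cycle and that $G$ acts edge- and vertex-transitively on that cycle (which it does, since $G$ acts vertex- and edge-transitively on $\Gamma$ and quotients preserve this). A clean way around the subtlety is to instead take $M$ to be the preimage in $G$ of the kernel $K$ of the action of $G/N$ on the $\K$-orbits of $\C(r,s)$ — equivalently, the kernel of the $G$-action on the $r$ blocks $\Delta_0, \ldots, \Delta_{r-1}$ lifted to $\V\Gamma$. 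This $M$ is automatically normal in $G$, and $\Gamma/M$ is a cycle of length $r$ by construction. One then needs $M$ to be a $2$-group: $M/N$ embeds in $K \le \rH^+$, and since $G/N$ is not $2$-arc-transitive on $\C(r,s)$, the stabiliser $(G/N)_v$ is a $2$-group, so $K$ — being generated by vertex-stabiliser-type elements acting within blocks, cf. the computation $\K = \langle (\rH^+)_v : v \rangle$ in~\eqref{eq:KH+} — is a $2$-group, whence $M$ is a $2$-group extension. Replacing $M$ by a minimal normal subgroup of $G$ inside it if necessary, Lemma~\ref{lem:GNc} applies and completes the argument.
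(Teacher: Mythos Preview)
Your plan has a genuine gap at the key reduction step. You construct a normal $2$-subgroup $M\unlhd G$ (the preimage of the kernel of the block action, which in the paper's notation is exactly the group $E = \langle G_u : u\in\V\Gamma\rangle$) with $\Gamma/M$ a cycle of length $r\ge 3$, and you then want to invoke Lemma~\ref{lem:GNc}. But that lemma requires a \emph{minimal} normal subgroup, and your $M$ is not minimal: it strictly contains $N$, and $\Gamma/N\cong\C(r,s)$ is $4$-valent, not a cycle. Your suggested fix, ``replace $M$ by a minimal normal subgroup of $G$ inside it,'' does not work: any such minimal normal subgroup (for instance $N$ itself) has a $4$-valent quotient, not a cyclic one, so the hypotheses of Lemma~\ref{lem:GNc} are lost. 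Nor can you fall back on Lemma~\ref{cor:3.4}: that lemma needs an \emph{abelian} normal subgroup that is not semiregular, and while $M/N\le\K$ and $N$ are each elementary abelian, the extension $M$ need not be abelian.

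This is not a cosmetic issue. The paper's proof shows that the obstruction is real: assuming $\Gamma$ is \emph{not} a Praeger--Xu graph (so every abelian normal subgroup is semiregular), one must analyse the internal structure of $E=M$ in detail --- showing $[E,E]=\Phi(E)=N$, setting up the $s$-arc machinery with generators $x_i$ and the parameter $e\ge t$, and then running a combinatorial ``red/pink/blue strip'' argument on the $E$-orbits to force $\fpr_{\V\Gamma}(g)\le 1/4$, a contradiction. In other words, precisely because $M$ is neither minimal nor known to be abelian, the short route through Lemmas~\ref{lem:GNc} or~\ref{cor:3.4} is blocked, and a bespoke fixed-point estimate is needed. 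Your bound $s'<2r'/3$ via Lemma~\ref{lem:rs} at the end is fine, but you never legitimately reach the point where it can be applied.
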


\begin{proof}
By Lemma~\ref{lem:rs}, we have $1\le s \le 2r/3$. Moreover, by way of contradiction, we
may assume that
$\Gamma\not\cong \C(r',s')$ for any $r'$ and $s'$.
Since $G/N \le \rH^+$, the group $G/N$ is not arc-transitive 
 and thus neither is $G$.
Let $\vGa:=\vGa^{(G)}$ be a digraph induced by the $\frac{1}{2}$-arc-transitive action of $G$.
Then $\vGa$ is a $G$-arc-transitive digraph of in- and out-valency $2$, and thus
in view of Lemmas~\ref{bloodyhell} and \ref{cor:3.4}, 
we may assume that:
\begin{equation}
\label{eq:no}
\hbox{Every abelian normal subgroup of } G\hbox{ acts semiregularly on } \V\vGa.
\end{equation}
Note that $\vGa/N \cong \vC(r,s)$.
We will now follow the ideas developed in the proof of~\cite[Theorem~$3.10$]{PSV}
as well as in \cite{PVdigraphs}, which in turn
draw heavily from the classical work of Tutte~\cite{tutte},  Djokovovi\'c~\cite{djokovic} and Sims~\cite{sims}. 

Identify $\vGa/N$ with $\vC(r,s)$ and let the automorphisms $\tau_i, \rho \in \Aut(\vGa)$ and the group $\K = \langle \tau_i \mid i\in \ZZ_r\rangle \cong \C_2^r$ be as in Section~\ref{sub2}.
Recall that $\rH^+ = \K\langle \rho\rangle$ and that $\K = \langle (\rH^+)_{x} : x \in \V\vC(r,s) \rangle = \langle \K_{x} : x \in \V\vC(r,s) \rangle$ (see formula~(\ref{eq:KH+})).
 Since $G/N \le H^+$ this implies that $(G/N)_{v^N} \le \K_{v^N}$ and hence
 $(G/N)_{v^N} = (G/N) \cap \K_v$.
Now let
\begin{equation}
E = \langle G_u : u \in \V\vGa\rangle
\end{equation}
and observe that $E=(G_v)^G$, the normal closure of $G_v$ in $G$.
We now see that
\begin{equation}
EN/N = (G_vN)^G/N = (G_vN/N)^{G/N} = ((G/N)_{v^N})^{G/N} = ((G/N) \cap \K_v)^{G/N}
= (G/N) \cap \K.
\end{equation}
By minimality of $N$, it follows that either $N\le G$ or $N\cap E = 1$.
If $N\cap E = 1$, we see that $E \cong EN/N \le \K$;
in particular, $E$ is an abelian normal subgroup of $G$ not acting semiregularly on $\V\vGa$,
 contradicting (\ref{eq:no}).

We may thus assume that $N\le E$. Then $E/N = EN/N = (G/N) \cap \K$.
Hence $E/N$ is an elementary abelian $2$-groups, implying that $E$ is
a $2$-group. Moreover, since $E_u \cong E_u /(N\cap E_u) \cong (E_uN/N) = (E/N)_{u^N}$,
wee see that
\begin{equation}
\label{eq:Euab}
E_u \> \hbox{ is an elementary abelian } 2\hbox{-group }\hbox{for every } u\in \V\Gamma.
\end{equation}
Now consider the Frattini subgroup $\Phi(E)$ and the derived subgroup
$[E,E]$ of $E$. Being characteristic in $E$, they are both normal in $G$. Recall that $\Phi(E)$  (respectively, $[E,E]$) is the smallest normal subgroup of $E$ with respect to which the quotient group is elementary abelian (respectively, abelian).
 In particular, since $E/N$ is elementary abelian, we see that $[E,E]\le\Phi(E)\le N$. By the minimality of $N$, it follows
that either $[E,E]=1$ or $[E,E] = \Phi(E) = N$. If $[E,E] = 1$, then $E$ is abelian, which contradicts (\ref{eq:no}).
We may thus assume that 
\begin{equation}
\label{eq:EE}
[E,E] = \Phi(E) = N.
\end{equation}

Moreover, since $E$ and $N$ are $2$-groups, the action of $E$ on $N\setminus\{1\}$ by conjugation must have at least one fixed point, implying that $N$ intersects the centre  $\Z E$ non-trivially. The minimality of $N$ then implies that $\Z E\ge N$. 
If $(\Z E)_v\ne 1$, then $\Z E$ is a normal elementary abelian $2$-subgroup of $G$ not acting semiregularly on $\V\Gamma$, which contradicts (\ref{eq:no}), showing that $\Z E$ acts semiregularly on $\V\Gamma$.

We will now set up a standard notation typically used when studying 
the structure of a vertex-stabiliser $G_v$ in a $G$-arc-transitive digraph of out-valence $2$
(see, for example, \cite[Section 2.3]{PVdigraphs}).
Let $t$ be the largest integer such that $G$ acts transitively on the $t$-arcs of $\vGa$. 
Note that $G$ then acts regularly on the set of all $t$-arcs of $\vGa$ and that
 $t$ is the largest integer such that $G_v$ (which clearly equals $E_v$) acts transitively on the $t$-arcs starting at $v$.
Let $a$ be any element of $G$ such that $(v^a,v)$ is an arc of $\vGa$ and let 
\begin{equation}
\label{eq:vi}
v_i := v^{a^{-i}} \hbox{  for } i\in \ZZ.
\end{equation}
Note that, for every $i\ge 0$, the $(i+1)$-tuple $(v_0,v_1, \ldots, v_i)$ is an $i$-arc of $\vGa$.
Observe also that $Ea \in G/E$ acts as a one-step rotation of $\vGa/E \cong \vec{\C}_r$, implying that $G=E\langle a\rangle$.

 Now consider the stabiliser
$G_{(v_0, \ldots, v_i)}$ of this $i$-arc in $G$; since $v_0 = v$ and $G_v = E_v$, we see that
$G_{(v_0, \ldots, v_i)} = E_{(v_0, \ldots, v_i)}$.
 By the definition of $t$, it follows that $G_{(v_0, \ldots, v_{t})} = 1$ and that $G_{(v_0, \ldots, v_{t-1})} = \langle x_0\rangle$,
where $x_0$ is the unique element of $G$ which fixes the $t-1$ arc $(v_0,v_1, \ldots, v_{t-1})$ but moves the vertex $v_t$.
For $i\ge 1$, let
\begin{equation}
\label{eq:xi}
x_i := x^{a^{i}}\>\> \hbox{  and } \>\> E_i:= \langle x_0, \ldots, x_{i-1} \rangle,\>\> E_0:=1.
 \end{equation}
It is not difficult to deduce (see \cite[Section 2.3]{PVdigraphs} for the proof) that
\begin{equation}
\label{eq:Ei}
\hbox{for every } i\in\{0, \ldots, t\}:\>\>
  E_i = G_{(v_0, \ldots, v_{t-i})}\> \hbox{ and }\> |E_i| = 2^i.
 \end{equation}
Moreover, by \cite[Section 2.3]{PVdigraphs},
 there exists a positive integer $e$ with the following properties:
\begin{itemize}
\item $e$ is the smallest integer such that $E_{t+e} = E_{t+e+1}$;
\item $e$ is the smallest integer such that $E_{t+e} = E$.
\end{itemize}
Recall that $E/N = (G/N) \cap \K$ is an elementary abelian $2$-group.
Let us now show that
\begin{equation}
\label{eq:E/N}
 |E/N| = 2^{t+e}\>\> \hbox{ and }\>\> E/N = \langle Nx_0, Nx_1, \ldots, Nx_{t+e-1} \rangle.
\end{equation}
Indeed: 
Suppose that for some $e' \le e$ we have
$E_{t+e'}N = E_{t+e'+1}N$. Since $(E_i)^a = \langle x_1, \ldots, x_i\rangle$,
it follows that
$E_{t+e'+2}N=\langle E_{t+e'+1}N,(E_{t+e'+1}N)^a\rangle =$
$\langle E_{t+e'}N,(E_{t+e'}N)^a\rangle = E_{t+e'+1}N = E_{t+e'}N$,
and thus by induction we see that 
 $E=E_{t+e}N = E_{t+e'}N$.
Since $N = \Phi(E)$, the set of non-generators of
$E$, it follows that $E_{t+e'} =E$, and thus $e=e'$. 
In particular,
 $e$ is the smallest integer such that $E_{t+e}N = E_{t+e+1}N$.
Hence
\begin{equation}
 N=E_0N<E_1N<E_2N<\cdots<E_{t+e-1}N<E_{t+e}N=E.
\end{equation}
In particular, $|E| \ge 2^{t+e}|N|$ and thus $|E/N| \ge 2^{t+e}$.
On the other hand, $E/N = \langle Nx_0, Nx_1, \ldots, Nx_{t+e-1}\rangle$,
and since $E/N$ is elementary abelian,
we see that $|E|\le2^{t+e}$, proving the claim (\ref{eq:E/N}).

Recall that $\Z E$ acts semiregularly on $\V\Gamma$. Since $E_v=E_t=\langle x_0,\ldots,x_{t-1}\rangle$ is abelian (see (\ref{eq:Euab})), it follows that $E_t^{a^{t-1}}=\langle x_{t-1},\ldots,x_{2t-2}\rangle$ is also abelian. Therefore $x_{t-1}$ is central in $\langle E_t,(E_t)^{a^{t-1}}\rangle=\langle x_0,\ldots,x_{2t-2}\rangle=E_{2t-1}$. Since $x_{t-1}\in E_v$ and $\Z E\cap E_v=1$, we get $E_{2t-1}<E=E_{t+e}$ and hence $2t-1<t+e$ from which it follows that 
\begin{equation}
\label{eq:mygod1}
e\geq t.
\end{equation}

We now prove a technical result (its relevance will become apparent later in the proof). Let $h$ be an arbitrary element of
$E_{v_0}\setminus E_{v_{-1}}$ and let $\beta = \min \{b \in \NN : h\in G_{(v_0, \ldots, v_{t-b})}\}$. Then:
\begin{equation}
\label{eq:tech}
1\le \beta \le t
\hbox{ and for every } j\in \{1, \ldots, e\} \hbox{ and every } x\in E \hbox{ we have } h^{a^{t-\beta+j}x} \not \in E_v
\hbox{ and } h^{a^{-1}x} \not \in E_v.
\end{equation}

Since $G_{(v_0,v_1,\ldots,v_{t})} = 1$, we see that $\beta \ge 1$ and since $h\in E_{v_0}=G_{v_0}$,
we see that $\beta\le t$. 
Recall that $G_{(v_0,v_1,\ldots,v_{t-i})} = \langle x_0,\ldots,x_{i-1}\rangle$
and that $e$ is the smallest integer such that $E=\langle x_0,\ldots,x_{t-1+e}\rangle$.
By definition, $h$ is an automorphism of $\vGa$ fixing the $(t-\beta)$-arc $(v_0,v_1,\ldots,v_{t-\beta})$ 
and moving the vertices $v_{-1}$ and $v_{t-\beta+1}$.
Since $v_0 = v$ and $E_{v}=G_{v}=\langle x_0,\ldots,x_{t-1}\rangle$, we may write 
$h=x_\alpha x_{\alpha+1}^{\varepsilon_{\alpha+1}}\cdots x_{\gamma-1}^{\varepsilon_{\gamma-1}} x_{\gamma}$, for some $0\le \alpha< \gamma< t$ and $\varepsilon_i\in \{0,1\}$.
Then $h\in \langle x_0, \ldots x_\gamma\rangle = G_{(v_0, \ldots, v_{t-\gamma-1})}$ and thus by
the definition of $\beta$ we see that $\gamma = \beta -1$.
Further, since $E_{v_{-1}} = E_{v^{a}} = (E_{v})^a = \langle x_1,\ldots,x_t\rangle$ and since $h\not\in E_{v_{-1}}$,
we see that $\alpha = 0$. Therefore
$h = x_0 x_{1}^{\varepsilon_{1}}\cdots x_{\beta-2}^{\varepsilon_{\beta-2}} x_{\beta-1}$
and thus
$$
h^{a^{t-\beta+j}x} = x^{-1}x_{t-\beta+j} x_{t-\beta+j+1}^{\varepsilon_1}\cdots x_{t+j-2}^{\varepsilon_{\beta-2}} x_{t+j-1} x.
$$
Suppose now that $h^{a^{t-\beta+j}x} \in E_v = \langle x_0,\ldots, x_{t-1}\rangle$. Since $E/N$ is an abelian group,  then
the above equality, when considered modulo the group $N$, implies 
$$
Nx_{t-\beta+j}\, Nx_{t-\beta+j+1}^{\varepsilon_1}\cdots Nx_{t+j-2}^{\varepsilon_{\beta-2}}\, Nx_{t+j-1} \in \langle Nx_0,\ldots, Nx_{t-1}\rangle.
$$
Since $j\ge 1$ and $t-\beta \ge 0$, we then see that
$$
Nx_{t+j-1} \in \langle Nx_0,\ldots, Nx_{t+j-2}\rangle.
$$
But since $j\le e$ this contradicts the fact that $\{Nx_0, Nx_1, \ldots, Nx_{t+e-1}\}$
is a minimal generating set for $E/N$; see (\ref{eq:E/N}). This contradiction shows that
$h^{a^{t-\beta+j}x} \not \in E_v$, as claimed. Similarly, if $h^{a^{-1}x} \in E_v$, then 
$h \in (E_v)^{x^{-1}a}$ and so $h\in x^a \langle x_1, \ldots, x_t \rangle (x^a)^{-1}$,
showing that $Nx_0 \in \langle Nx_1, \ldots, Nx_t \rangle$, again contradicting (\ref{eq:E/N}).
\smallskip

Now let $g$ be a non-trivial element of $G$ 
with $\fpr_{\V\Gamma}(g) > \frac{1}{3}$  and let $u\in \Fix_{\V\Gamma}(g)$.
 Since $g\in G_u = E_u$, we see that $[g,E]_u \le [E,E]_u = N_u=1$.
 In particular, since $[g,E_u] \le [g,E]_u$, it follows that $[g,E_u] = 1$,
 and thus ${\cent E g}_u = E_u$ for every $u\in \Fix_{\V\Gamma}(g)$.
We may now apply Lemma~\ref{lemma:1} with $\V\Gamma$ in place of $\Omega$, with 
$E$ in place of $X$ and with $u$ in place of $\omega$,
to conclude that
 \begin{equation}
 \label{eq:f1}
\Fix_{u^E}(g)=u^{\cent E g}\quad \hbox{and} \quad
\frac{1}{3} < \fpr_{\V\Gamma}(g) \le \fpr_{u^E}(g) =
\frac{1}{|E : \cent E g|}
 \end{equation}
and so $ |E:\cent E g| \le 2$. If $E=\cent E g$, then $g\in \Z E\cap E_v=(\Z E)_v=1$, a contradiction. Therefore $|E:\cent E g|=2$ and thus
 \begin{equation}
 \label{eq:f2}
\fpr_{u^E}(g)=
\begin{cases}
\frac{1}{|E:\cent E g|} = \frac{1}{2}&\mathrm{if\> }\Fix_{u^E}(g)\ne \emptyset,\\
0&\mathrm{if\> }\Fix_{u^E}(g)=\emptyset.
\end{cases}
 \end{equation}

Now assume without loss of generality  that $v\in \Fix_{\V\Gamma}(g)$.
Recall that $E=E_{t+e} = \langle x_0, \ldots, x_{t+e-1}\rangle$. Since $\Fix_{\V\Gamma}(g) \not = \emptyset$ it follows by
(\ref{eq:f2}) that $|E:\cent Eg|=2$ and thus there exists the smallest integer $i \in \{0,\ldots,t+e-1\}$ such that $x_i \not \in \cent Eg$.
 If $i< t$, then
  $x_i\in E_v = {\cent Eg}_v$
  (see \eqref{eq:Ei}), contradicting the choice of $i$.
 Hence we have
\begin{equation}
\label{eq:triglav}
E=\cent Eg \cup x_i\cent Eg \>\>\hbox{ for some }\> i\in \{t, t+1,\ldots,t+e-1\}.
\end{equation}

Since the automorphism $a$ maps a vertex to its neighbour in $\Gamma$, the connectivity of $\Gamma$ implies that
$G=\langle G_v, a\rangle \le \langle E,a\rangle$.
Now observe that
 $\vGa/E \cong (\vGa/N)/(E/N) \cong \vC(r,s) / ((G/N) \cap \K) \cong \vC_r$;
 in particular,
  the $E$-orbits on $\V\vGa$ can be labelled by $\Delta_i$, $i\in \ZZ_r$,
 in such a way that every arc of $\vGa$ starting in some $\Delta_i$ ends in $\Delta_{i+1}$. We may assume without loss of generality that
 $v\in \Delta_0$.
 Since the automorphism $a$ maps the vertex $v$ to its in-neighbour (see (\ref{eq:vi})), it follows that $(\Delta_j)^a = \Delta_{j-1}$
 for every $j\in \ZZ_r$, and thus 
 $$
 \Delta_j = v^{Ea^{-j}} = v^{a^{-j}E} = (v_{j})^E.
 $$
Using~\eqref{eq:triglav}, we can split this $E$-orbit into two halves, that is, 
$$
\Delta_j=\Delta_j' \cup \Delta_j'' \>\>\hbox{ where }\> \Delta_j' = v^{a^{-j}\cent E g},\>\>  \Delta_j'' = v^{a^{-j}x_i\cent Eg}\> \hbox { and } \>
\Delta_j' \cap \Delta_j'' = \emptyset.
$$

Let us call the $E$-orbit $\Delta_j$ {\em blue} provided that $\Fix_{\Delta_j}(g) = \emptyset$, {\em red} if $g \in E_{v_j}$, and
{\em pink} if $g \in E_{v_j^{x_i}}$.
By~\eqref{eq:f1} we see that $\Delta_j$ is red if and only if $\Fix_{\Delta_j}(g) = \Delta_j'$ 
and that it is pink if and only if $\Fix_{\Delta_j}(g) = \Delta_j''$. In particular, if $\Delta_j$ is not blue, then it is either red or pink.
Moreover, if $\Delta_j$ is red, then $g\in E_{v_j} = (E_v)^{a^{-j}}$, and if it is pink, then 
$g\in E_{(v_j)^{x_i}} = (E_v)^{a^{-j}x_i} = (E_v)^{x_{i+j}a^{-j}}$.
This immediately implies that:
\begin{equation}
\label{eq:riff}
\Delta_j \hbox{ is red }\> \Longleftrightarrow \> g^{a^{j}} \in E_v\quad \hbox{ and }\quad \Delta_j \hbox{ is pink }\> \Longleftrightarrow \> g^{a^{j}x_{i+j}}\in E_v.
\end{equation}

If, for a colour $X\in\{\hbox{red}, \hbox{ pink}, \hbox{ blue}\}$, an element $k\in \ZZ_r$ and a positive integer $\ell$  the orbits $\Delta_{k}, \ldots, \Delta_{k+\ell-1}$ are all of colour $X$
while $\Delta_{k-1}$ and $\Delta_{k+\ell}$ are of a colour different than $X$, we say that $S:=\{k,\ldots,k+\ell-1\}$ is a {\em strip of colour} $X$ and of length $\ell(S):=\ell$.
Let $S:=\{k,\ldots,k+\ell-1\}$ be a strip. Then the strip containing $k-1$ is said to {\em precede} $S$ and the strip
containing $k+\ell$ {\em follows} the strip $S$.

Let $S$ be a strip preceded by a strip $S^-$ and followed by a strip $S^+$.
We will now show that the following holds:
\begin{equation}
\label{eq:claim4}
\hbox{ If } S \hbox{ is red or pink, then } S^+ \hbox{ and } S^- \hbox{ are blue, } \ell(S) \le t \hbox{ and  } \ell(S^+)\ge e.
\end{equation}

Suppose first that $S=\{k,\ldots,k+\ell-1\}$ is a red strip. Let 
$h:= g^{a^k}$. 
Then, by (\ref{eq:riff}), we see that
$h^{a^{j}} \in E_v$ for every $j\in\{0,\ldots, \ell -1\}$, while $h^{a^{-1}}, h^{a^{\ell}} \not \in E_v$.
In other words, $h\in E_{v_0}\setminus E_{v_{-1}}$, $h\in E_{(v_0,\ldots,v_{\ell-1})}$ while $h\not \in E_{v_\ell}$.
Hence  $\beta := \min \{b \in \NN : h\in G_{(v_0, \ldots, v_{t-b})}\} = t -\ell + 1$. We may now apply (\ref{eq:tech})
to conclude that $1\le t-\ell+1\le t$ (implying $\ell \le t$, as required)
 and that neither of the elements $h^{a^{-1}x}$ and $h^{a^{\ell-1+j}x}$ for $j\in\{1,\ldots,e\}$ and $x\in E$
belongs to $E_v$. In view of (\ref{eq:riff}), this implies that neither of the orbits $\Delta_{k-1}$ and 
$\Delta_{k+\ell}, \ldots, \Delta_{k+\ell+e-1}$
are red or pink, showing that the strips $S^{-1}$ and $S^+$ are blue and that $\ell(S^+)\ge e$. This
completes the proof of the claim (\ref{eq:claim4}).

Since $e\ge t$ (see (\ref{eq:mygod1})), the claim (\ref{eq:claim4}) implies that the number of blue orbits
is greater of equal to the number of red and pink orbits combined. Since $g$ has no fixed points in blue orbits
and fixes precisely half of the points in each red or pink orbit, this shows that $g$ fixes at most $1/4$ of the vertices
of $\Gamma$. This contradictions completes the proof of the theorem in the case where
$\Gamma/N \cong \C(r,s)$ for some $r$ and $s$.
\end{proof}

We can now prove Theorem~\ref{thrm:1} under the assumption that $G$ is not $2$-arc-transitive.

\begin{theorem}
\label{the:GHAT}
Let $\Gamma$ be a connected $4$-valent graph, let $G$ be an edge- and vertex-transitive but not
$2$-arc-transitive group of automorphisms of $\Gamma$ and let $g$ be a non-trivial element of
$G$ with $\fpr_{\V\Gamma}(g) > 1/3$.
 Then $\Gamma\cong \C(r,s)$ for some positive integers $r$ and $s$ with $1\le s< 2r/3$.
\end{theorem}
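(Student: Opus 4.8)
The plan is to argue by induction on $|\V\Gamma|$, assuming the full statement of Theorem~\ref{thrm:1} is available for graphs of strictly smaller order. First I would apply Lemma~\ref{lem:GHAT} to $(\Gamma,G,g)$, which satisfies Hypothesis~\ref{hyp:init}. Either $\Gamma\cong\C(r,s)$ for some $r,s$ — and then Lemma~\ref{lem:rs} immediately forces $1\le s<2r/3$, finishing the proof — or $G$ has a minimal normal subgroup $N$ that is an elementary abelian $2$-group with $\Gamma/N$ a $4$-valent graph. In the latter case the remarks on normal quotients give $N_v=1$, so $N$ is semiregular on $\V\Gamma$; hence $g\notin N$ (a non-identity element of a semiregular group fixes nothing), so $Ng\ne1$, and Lemma~\ref{lem:quofix} gives $\fpr_{\V(\Gamma/N)}(Ng)\ge\fpr_{\V\Gamma}(g)>1/3$. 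Moreover $(G/N)_{v^N}\cong G_v$ is a $2$-group (because $N_v=1$ and $G$ is not $2$-arc-transitive), so $G/N$ acts edge- and vertex-transitively but \emph{not} $2$-arc-transitively on the $4$-valent graph $\Gamma/N$.

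Since $|\V(\Gamma/N)|<|\V\Gamma|$, the inductive hypothesis applies to $\Gamma/N$: it is arc-transitive and is either one of $\Psi_1,\dots,\Psi_6$ or a Praeger--Xu graph $\C(r,s)$ with $1\le s<2r/3$. The possibility $\Gamma/N\cong\Psi_i$ is excluded, for by Lemma~\ref{lemma:technical} (whose second alternative is impossible, since no $\Psi_i$ has $r2^s$ vertices — each is bipartite or of the wrong order) $G/N$ would be $2$-arc-transitive on $\Psi_i$, contradicting that $(G/N)_{v^N}$ is a $2$-group. Hence $\Gamma/N\cong\C(r,s)$ with $1\le s<2r/3$. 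Applying Lemma~\ref{lemma:technical} once more and again discarding the $2$-arc-transitive alternative, $G/N$ is $\Aut(\Gamma/N)$-conjugate to a subgroup of $\rH$, so after replacing $G$ by a conjugate we may assume $G/N\le\rH$. By Lemma~\ref{lem:rs}, the element $Ng$, having fixed-point ratio exceeding $1/3$ on $\C(r,s)$, lies in the subgroup $\K$ of $\rH$ defined by~\eqref{eq:K}; since $\K\le\rH^+$ (see~\eqref{eq:H}), the element $g$ lies in $\pi^{-1}((G/N)\cap\rH^+)$, where $\pi\colon G\to G/N$.

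To finish I would split on the orbit structure of $G$ on arcs. If $G$ is $\frac{1}{2}$-arc-transitive, then so is $G/N$ on $\C(r,s)$; for $r\ne4$ (the case $r\le6$ being a finite check as in the proof of Lemma~\ref{lem:rs}) the group $\rH^+$ is the stabiliser of the essentially unique arc-transitive orientation $\vC(r,s)$, so after conjugating by $\sigma$ if necessary we may assume $G/N\le\rH^+$; Lemma~\ref{lem:vCcov} then yields $\Gamma\cong\C(r',s')$ with $1\le s'\le 2r'/3$, and a final appeal to Lemma~\ref{lem:rs} (using $\fpr_{\V\Gamma}(g)>1/3$) sharpens this to $s'<2r'/3$. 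If instead $G$ is arc-transitive but not $2$-arc-transitive, I would pass to the orientation-preserving normal subgroup $G^+:=\pi^{-1}((G/N)\cap\rH^+)$, of index $2$ in $G$: it is edge- and vertex-transitive (in fact $\frac{1}{2}$-arc-transitive) on $\Gamma$, it contains $g$ by the previous paragraph, and $G^+/N\le\rH^+$, so Lemma~\ref{lem:vCcov} applies to $(\Gamma,G^+,g)$ and again delivers $\Gamma\cong\C(r',s')$; should $N$ fail to be minimal in $G^+$, one first replaces it by a minimal $G^+$-normal subgroup $M\le N$ and re-invokes the inductive hypothesis on the strictly smaller graph $\Gamma/M$ to know it is still Praeger--Xu.

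The main obstacle is precisely this last covering step — recovering that $\Gamma$ itself is a Praeger--Xu graph from the knowledge that $\Gamma/N$ is — and in particular forcing the arc-transitive case into the $\frac{1}{2}$-arc-transitive framework for which Lemma~\ref{lem:vCcov} is designed. The key point that makes the reduction via $G^+$ work is that, by Lemma~\ref{lem:rs}, every automorphism of a Praeger--Xu graph with fixed-point ratio above $1/3$ already lies in $\K\le\rH^+$, so no information about $g$ is lost when passing to the index-two subgroup. The base of the induction is absorbed into the small-order computations already invoked in the proofs of Lemmas~\ref{lem:GHAT} and~\ref{lem:rs}.
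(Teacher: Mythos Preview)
Your overall strategy matches the paper's: take a minimal counterexample $\Gamma$, apply Lemma~\ref{lem:GHAT} to get a minimal normal $2$-subgroup $N$ with $\Gamma/N$ of valency $4$, pass to the quotient, recognise $\Gamma/N$ as a Praeger--Xu graph, use Lemma~\ref{lemma:technical} and Lemma~\ref{lem:rs} to force $G/N$ into $\rH$ and $Ng$ into $\K\le\rH^+$, and then feed everything into Lemma~\ref{lem:vCcov}. Two points, however, are handled more cleanly in the paper.

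First, there is no need to invoke Theorem~\ref{thrm:1} inductively. Since $(G/N)_{v^N}\cong G_v$ is a $2$-group, the pair $(\Gamma/N,G/N)$ already satisfies the hypotheses of \emph{this} theorem, so the inductive hypothesis of Theorem~\ref{the:GHAT} itself gives $\Gamma/N\cong\C(r,s)$ directly; the graphs $\Psi_i$ never enter. (Incidentally, your assertion that ``no $\Psi_i$ has $r2^s$ vertices'' is false: $|\V\Psi_2|=10=5\cdot 2$, $|\V\Psi_3|=14=7\cdot 2$, etc.; what is true is that no $\Psi_i$ is isomorphic to any $\C(r,s)$, but this requires a separate check.)

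Second, and more importantly, the paper avoids your entire case split and the ``should $N$ fail to be minimal in $G^+$'' contingency by choosing, in addition to $\Gamma$ minimal, the group $G$ to be of \emph{smallest order} among all edge- and vertex-transitive but not $2$-arc-transitive subgroups of $\Aut(\Gamma)$ containing a non-trivial element of fixed-point ratio exceeding $1/3$. With this choice, once you form $G^+:=\pi^{-1}\bigl((G/N)\cap\rH^+\bigr)$ and observe that $g\in G^+$ and that $G^+$ again satisfies the hypotheses of the theorem, minimality of $|G|$ forces $G=G^+$; hence $G/N\le\rH^+$, $N$ is still minimal normal in $G$, and Lemma~\ref{lem:vCcov} applies immediately. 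Your alternative route --- replacing $N$ by a $G^+$-minimal $M\le N$, re-inducting to learn $\Gamma/M\cong\C(r'',s'')$, and then applying Lemma~\ref{lem:vCcov} --- is not complete as written: to invoke Lemma~\ref{lem:vCcov} for $(G^+,M)$ you would need $G^+/M\le(\rH^+)''$ for the \emph{new} Praeger--Xu structure, which means repeating the Lemma~\ref{lemma:technical}/Lemma~\ref{lem:rs}/pass-to-index-$2$ loop with $G^+$ in place of $G$, and you give no argument that this recursion terminates. It does (each pass strictly decreases the order of the group), but packaging this as ``choose $G$ minimal'' from the outset is both shorter and what the paper does.
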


\begin{proof}
Suppose that the theorem is false and let $\Gamma$ be a counterexample
with the smallest number of vertices.
Moreover, among all groups $G$ satisfying the assumptions of the theorem,
choose one of smallest order.

By Lemma~\ref{lem:GHAT}, there exists a minimal normal subgroup $N$ of $G$ of order
a power of $2$ acting semiregularly on $\V\Gamma$ such that $\Gamma/N$ is
a $4$-valent graph. Then $G/N$ acts edge- and vertex-transitively on $\Gamma/N$ but
not $2$-arc-transitively, and by Lemma~\ref{lem:quofix}, we see that $\fpr_{\V\Gamma}(Ng) > 1/3$.
The minimality of $\Gamma$ then implies that $\Gamma/N \cong \C(r',s')$ for some $r'$ and $s'$ with $1\le s'< 2r'/3$. By Lemma~\ref{lemma:technical}, it follows that $G/N$ is $\Aut(\Gamma/N)$-conjugate to a subgroup of $\rH$. Without loss of generality we may thus assume that 
$G/N \le \rH$. Furthermore, by Lemma~\ref{lem:rs}, we see that $Ng \in \K \le \rH^+$.
Now consider the group $X:=G/N \cap \rH^+$. Since $|\rH:\rH^+| = 2$, we see that $|G/N : X| \le 2$
and $X$ is a $\frac{1}{2}$-arc-transitive group of automorphisms of $\Gamma/N$.
Let $G^+$ be the preimage of $X$ with respect to the quotient projection $G \to G/N$.
Then $G^+/N \cong X \le \rH^+$, $G^+$ is $\frac{1}{2}$-arc-transitive and since $Ng\in X$,
we see that $g\in G^+$. By our choice of $G$ this implies that $G=G^+$,
and hence $G/N \le \rH^+$. The result now follows from Lemma~\ref{lem:vCcov}.
\end{proof}
\color{black}

\section{Proof of Theorem~\ref{thrm:2} for $\Gamma$ not arc-transitive}

We now move our attention to $3$-valent vertex- but not arc-transitive graphs.
As observed in~\cite{PSV,census1}, the $3$-valent graph admitting a vertex- but not arc-transitive group of automorphisms are closely related to the family of $4$-valent graph admitting an arc- but not $2$-arc-transitive group of automorphisms.
This will enable us to reduce the proof of Theorem~\ref{the:cubicNonAT} below to the situation covered by Theorem~\ref{thrm:1}.

In the proof of Theorem~\ref{the:cubicNonAT} we need to refer to two special families of cubic vertex-transitive graphs: the {\em prisms} $\Prism_n$, that can be defined as the Cayley graphs
$\Cay(\ZZ_n\times\ZZ_2,\{(0,1),(1,0),(-1,0)\})$ for $n\ge 3$,
 and the {\em M\"{o}bius ladders} $\Mob_n$, defined as the Cayley graphs
 $\Cay(\ZZ_{2n},\{1,-1,n\})$ for some $n\geq 2$.

\begin{theorem}
\label{the:cubicNonAT}
Let $\Gamma$ be a connected $3$-valent graph admitting a vertex-transitive but not arc-transitive group of automorphisms $G$. Let $g \in G$ be a non-trivial element of smallest order such that
 $\fpr_{\V\Gamma}(g) > 1/3$. Then $\Gamma$ is either a Split Praeger-Xu graph
 $\Spl(\C(r,s))$ with $1\le s \le 2r/3$, $r\ge 3$, or
 isomorphic to $\Lambda_1$ (the complete graph $\K_4$) or $\Lambda_3$ (the skeleton of the cube).
\end{theorem}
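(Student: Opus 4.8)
The plan is to reduce the cubic vertex- but not arc-transitive situation to the 4-valent arc- but not 2-arc-transitive case already handled by Theorem~\ref{the:GHAT}, via the standard correspondence between these two families. Concretely, if $G$ acts vertex-transitively but not arc-transitively on the connected cubic graph $\Gamma$, then $G_v$ acts on the three neighbours of $v$ with two orbits, one of size $1$ and one of size $2$; the size-$2$ orbits across all vertices determine a $G$-invariant perfect matching $M$ on $\Gamma$. Contracting the edges of $M$ (or, equivalently, applying the inverse of the splitting operation $\mathrm{S}(-)$) produces a connected $4$-valent graph $\Delta$ on which $G$ acts arc-transitively, and the arc-transitive action cannot be $2$-arc-transitive because the cubic $G$-action is not arc-transitive (the local structure rules this out; this is exactly the content of the correspondence noted in \cite{PSV,census1}). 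The first step, then, is to set up this correspondence carefully, recording that $|\V\Delta| = |\V\Gamma|/2$ and that $\Gamma \cong \mathrm{S}(\Delta)$ when $\Delta$ arises as a Praeger--Xu graph — this is where $\Spl(\C(r,s))$ will come from.

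The second step is to transfer the fixed-point hypothesis. If $g \in G \setminus \{1\}$ fixes more than $1/3$ of $\V\Gamma$, I want to conclude that $g$ (viewed as an automorphism of $\Delta$) fixes more than $1/3$ of $\V\Delta$. Here one must be slightly careful: $g$ fixing a vertex of $\Gamma$ need not immediately give a fixed vertex of $\Delta$, but $g$ fixing \emph{both} endpoints of an $M$-edge does. Since $g$ has smallest order among elements with $\fpr > 1/3$, if $g$ has even order then $g^2$ would contradict minimality unless $g^2 = 1$; and if $g$ is an involution, the set of $M$-edges on which $g$ acts nontrivially (swapping endpoints or moving the edge) can be controlled by the matching structure. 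The cleanest route is: let $\pi\colon \V\Gamma \to \V\Delta$ be the contraction map; then $\pi(\Fix_{\V\Gamma}(g))$ contains every $\Delta$-vertex whose $M$-edge is pointwise fixed, and $\fpr_{\V\Gamma}(g) > 1/3$ forces enough pointwise-fixed $M$-edges that $\fpr_{\V\Delta}(g) > 1/3$ as well. (If $g$ swaps the two endpoints of an $M$-edge it still fixes the corresponding vertex of $\Delta$, so in fact $\fpr_{\V\Delta}(g) \ge \fpr_{\V\Gamma}(g)$ once one checks $g$ preserves $M$, which it does since $M$ is $G$-invariant.)

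The third step invokes Theorem~\ref{the:GHAT}: $\Delta$ is a connected $4$-valent graph with an arc- but not $2$-arc-transitive group $G$ containing an element with $\fpr_{\V\Delta} > 1/3$, hence $\Delta \cong \C(r,s)$ with $1 \le s < 2r/3$. If $r \ge 3$ and $s \ge 1$ this gives $\Gamma \cong \mathrm{S}(\C(r,s)) = \Spl(\C(r,s))$ with $1 \le s \le 2r/3$, which is the generic conclusion. The remaining step is to chase the small/degenerate cases of the correspondence: the reduction to $\Delta = \C(r,s)$ presupposes $\Delta$ is genuinely $4$-valent and that the splitting inverse applies cleanly, which can fail for very small graphs (for instance when $\Delta$ has multiple edges, or when the matching contraction collapses $\Gamma$ onto a multigraph or a cycle rather than a simple $4$-valent graph). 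These exceptional configurations are precisely where $\Lambda_1 = K_4$ and $\Lambda_3$ (the cube) appear — $K_4$ splits from a doubled triangle / $2$-cycle type object and the cube splits from $\C(3,1)$-like data with small parameters — and here I would simply enumerate the finitely many small cubic vertex- but not arc-transitive graphs (using the census \cite{census1,census2} and {\sc Magma} \cite{magma}) to confirm that $\Lambda_1$ and $\Lambda_3$ are the only sporadic survivors, the other small $\Lambda_i$ from Theorem~\ref{thrm:2} being arc-transitive and hence outside the hypotheses.

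The main obstacle I anticipate is not step~3 (which is a black-box application) but the bookkeeping in steps~1 and~2: establishing that the contraction genuinely yields a simple $4$-valent arc-transitive graph (ruling out the multigraph/cycle degeneracies so that Theorem~\ref{the:GHAT} applies) and verifying that the $1/3$ fixed-point bound survives the contraction without loss. The minimality-of-order hypothesis on $g$ is the tool that tames step~2 — it forces $g$ to be an involution in the awkward cases — and the prisms $\Prism_n$ and Möbius ladders $\Mob_n$ are exactly the cubic graphs where $G$ is vertex- but not arc-transitive yet the matching contraction behaves atypically (contracting to a cycle rather than a $4$-valent graph), so these must be examined separately and shown to contain no non-identity automorphism with $\fpr > 1/3$ outside the listed exceptions.
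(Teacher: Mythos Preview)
Your approach is essentially the same as the paper's, and the overall strategy is sound. There are, however, two points worth correcting.

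First, you have the matching backwards: the $G$-invariant perfect matching $M$ comes from the size-$1$ orbits of $G_v$ on $\Gamma(v)$, not the size-$2$ orbits. For each vertex $w$ there is a unique neighbour $w'$ with $\{w'\}$ the singleton orbit of $G_w$, and one checks $w'' = w$ and $G_w = G_{w'}$; the edges $\{w,w'\}$ form $M$. The size-$2$ orbits assemble into a $2$-factor $\mathcal{F}$, not a matching. Your contraction description is correct once $M$ is fixed, so this is a slip in the description rather than the construction, but it should be stated correctly.

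Second, you over-engineer the fixed-point transfer. The $M$-edges form a $G$-invariant partition of $\V\Gamma$, and the vertices of the merged graph $\Delta$ are exactly the blocks of this partition; hence $\fpr_{\V\Delta}(g) \ge \fpr_{\V\Gamma}(g)$ follows in one line from Lemma~\ref{lem:quofix}. The minimality-of-order hypothesis on $g$ is not needed for this step at all (and indeed the paper's proof does not use it beyond noting parenthetically that $G_v$ is a $2$-group, forcing $o(g)=2$). Your worry about $g$ swapping the endpoints of an $M$-edge is harmless --- such a $g$ still fixes the corresponding block --- but the general partition lemma already covers this.

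Otherwise your outline matches the paper: rule out prisms and M\"{o}bius ladders (where the merge degenerates to a cycle), cite \cite{census1} for the fact that the merge $\tilde{\Gamma}$ is then $4$-valent with $G$ acting faithfully, arc-transitively and not $2$-arc-transitively, apply the $4$-valent theorem to get $\tilde{\Gamma}\cong\C(r,s)$, and reconstruct $\Gamma\cong\Spl(\C(r,s))$ via the splitting operation. The small exceptions $\Lambda_1$ and $\Lambda_3$ are handled by the paper simply by checking the census for $|\V\Gamma|\le 1280$ at the outset, which is cleaner than trying to trace them through degenerate merges as you suggest.
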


\begin{proof}
 By consulting
the database \cite{census1} of $3$-valent vertex-transitive graphs on at most $1280$ vertices,
we checked that Theorem~\ref{thrm:2} holds if $|\V\Gamma|\le 1280$.
We may thus assume that $|\V\Gamma|>1280$ (in fact, we will only use
$|\V\Gamma|>140$).
 Observe the vertex-stabiliser $G_v$ is 
 a $2$-group (and thus the order $o(g)$ of $g$ is $2$)
 whose action upon $\Gamma(v)$ has two orbits, one of length $2$ and
 one of length $1$.
  
For a vertex $w \in \V(\Gamma)$ let $w'$ be the neighbour 
of $w$ such that $\{w\}$ is the orbit of $G_w$ of length $1$.
Then clearly $w'' = w$ and $G_w= G_w'$. Hence, the set $\cM:=\{\{w,w'\}: w\in \V\Gamma\}$ is  a complete matching of $\Gamma$, while edges outside $\cM$ form
a $2$-factor $\cF$. The group $G$ preserves both $\cF$ and $\cM$ and acts
 transitively on the arcs of each of these two sets. Let
 ${\tilde{\Gamma}}$ be the graph with vertex-set $\cM$ and two vertices
 $e_1 ,e_2 \in \cM$ adjacent if and only if they are (as edges of $\Gamma$)
 at distance $1$ in $\Gamma$. The graph ${\tilde{\Gamma}}$ is then called the {\em merge} of 
 $\Gamma$. We may also think of $\Gamma$ as being obtained by contracting all the edges in $\cM$.  The group $G$ clearly acts as an arc-transitive group of automorphisms on ${\tilde{\Gamma}}$.
 Moreover, the connected components of the the $2$-factor $\cF$ gives rise to a decomposition $\cC$ of $\E{\tilde{\Gamma}}$ into cycles.
 
 If $\Gamma \cong \Prism_n$ or $\Mob_n$  for some $n\ge 3$, then it is easy to see that a non-trivial automorphism of $\Gamma$ can fix at most $4$ vertices, which, together with the assumption $\fpr_{\V\Gamma}(g) > 1/3$ implies that $|\V\Gamma| < 12$, contradicting our assumption on $\Gamma$.
 We may thus assume that $\Gamma$ is neither a prism nor a M\"{o}bius ladder.
 As was shown in \cite[Lemma 9 and Theorem 10]{census1}, this implies that ${\tilde{\Gamma}}$ is $4$-valent.
 Moreover,  the action of $G$ on $\V{\tilde{\Gamma}}$ is faithful, arc-transitive but not 2-arc-transitive.
Observe also that $\fpr_{\V{\tilde{\Gamma}}}(g) \ge \fpr_{\V\Gamma}(g)>1/3$.
By Theorem~\ref{thrm:1}, it thus follows that ${\tilde{\Gamma}} \cong \C(r,s)$ with
$1\le s<2r/3$, $r\ge 3$. In view of \cite[Theorem 12]{census1}, the graph
$\Gamma$ can then be uniquely reconstructed from ${\tilde{\Gamma}}$ and the decomposition $\cC$ of $\E{\tilde{\Gamma}}$ arising from the $2$-factor $\cF$ via the {\em splitting operation} defined in
\cite[Construction 11]{census1}. In short, $\Gamma$ can be obtained from ${\tilde{\Gamma}}$ by
splitting each vertex $v$ of ${\tilde{\Gamma}}$ into two adjacent vertices $v', v''$, each of them retaining
two neighbours of $v$ in ${\tilde{\Gamma}}$, that together with $v$ form a part of a cycle in $\cC$.
It is then straightforward to see that $\Gamma$ is the Split Praeger-Xu graph $\Spl(\C(r,s))$;
or, which is equivalent, that the merging operation applied to $\Spl(\C(r,s))$ yields the
 graph $\C(r,s)$. 
\end{proof}

\color{black}

\section{Graph-theoretical consideration}
\label{sec:geometry}

In this section we make a digression into purely graph-theoretical considerations.
We begin with an easy observation about $3$-valent vertex-transitive graphs,
and then prove the in the $4$-valent arc-transitive case we may assume that
a non-trivial element fixing more than $1/3$ vertices fixes an arc of the graph.

\begin{lemma}
\label{lem:unworthy3}
Let $\Gamma$ be a connected $3$-valent vertex-transitive graph. If there exist two distinct
vertices $u$ and $u'$ of $\Gamma$ such that $\Gamma(u) = \Gamma(u')$, then
$\Gamma\cong \K_{3,3}$.
\end{lemma}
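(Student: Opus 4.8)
The plan is to show that the hypothesis $\Gamma(u)=\Gamma(u')$ for two distinct vertices forces $\Gamma$ to be a very rigid bipartite-like structure on six vertices. First I would fix such a pair $u,u'$ and set $\Delta:=\Gamma(u)=\Gamma(u')$, noting $|\Delta|=3$ and $u,u'\notin\Delta$. The key local observation is that, by vertex-transitivity, every vertex lies in the $G$-orbit of $u$, so the relation ``$w\sim w'$ iff $\Gamma(w)=\Gamma(w')$'' is a $G$-invariant relation, hence a $G$-invariant partition of $\V\Gamma$ into blocks, each block being a set of vertices with a common neighbourhood. Each block has the same size $m\ge 2$; since the three vertices of $\Delta$ are each adjacent to all $m$ vertices of the block containing $u$, and each has valency $3$, we get $m\le 3$.

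\textbf{Key steps.} Next I would rule out $m=3$ and pin down $m=2$ vs.\ the structure. If the block $B\ni u$ has size $3$, then each of the three vertices of $\Delta$ has all three of its neighbours inside $B$, so $\Gamma$ restricted to $B\cup\Delta$ is already $3$-regular, hence a connected component; by connectivity $\V\Gamma=B\cup\Delta$, $|\V\Gamma|=6$, and $\Gamma\cong K_{3,3}$ directly. If $m=2$, write $B=\{u,u'\}$ with common neighbourhood $\Delta=\{a_1,a_2,a_3\}$. Each $a_i$ has $u,u'$ among its neighbours and one further neighbour $b_i$; I would argue the $b_i$ are pairwise distinct (if $b_i=b_j$ then $a_i,a_j$ have the common neighbourhood $\{u,u',b_i\}$ of size $3$, putting us back in the $m=3$ case, contradiction — or one re-runs the previous paragraph). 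So $\Delta$ sits between the block $\{u,u'\}$ and three further vertices $b_1,b_2,b_3$. Applying the block structure again at $b_1$: the block containing $b_1$ has size $2$, say $\{b_1,b_1'\}$, and $\Gamma(b_1)=\Gamma(b_1')$; since $a_1\in\Gamma(b_1)$, also $a_1\sim b_1'$, forcing $b_1'\in\{u,u'\}$ or $b_1'=b_j$ — but $a_1$'s neighbours are exactly $u,u',b_1$, so $b_1'$ would have to already be a neighbour of $a_1$, i.e. $b_1'\in\{u,u'\}$; that is impossible because $u,u'$ lie in a block by themselves. I would instead track the common neighbourhood of $b_1$ and conclude $|\V\Gamma|$ is forced to be $6$ by a short counting/connectivity argument, giving $\Gamma\cong K_{3,3}$.

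\textbf{Cleaner route and main obstacle.} Actually the cleanest argument avoids the case split: let $B$ be the block containing $u$. Every vertex of $\Delta:=\Gamma(u)$ is adjacent to all $|B|$ vertices of $B$, so the bipartite graph between $B$ and $\Delta$ already uses up all $3$ edges at each vertex of $\Delta$ once $|B|=3$, and uses up $|B|$ of the $3$ edges at each vertex of $B$; in all cases the edge count forces, together with connectivity, that the component containing $u$ is exactly $B\cup\Delta$ and is isomorphic to a complete bipartite graph $K_{|B|,3}$, which is $3$-regular only when $|B|=3$, so $|B|=3$ and $\Gamma\cong K_{3,3}$. The main obstacle is just making the ``no edges leave $B\cup\Delta$'' step airtight: one must check that no vertex of $B$ has a neighbour outside $\Delta$ (true because $|B|=3$ means all $3$ neighbours of each vertex of $B$ lie in $\Delta$) and no vertex of $\Delta$ has a neighbour outside $B$ (true because all $3$ of its neighbours lie in $B$); then connectivity of $\Gamma$ closes the argument. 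No deep input beyond vertex-transitivity and the valency-$3$ regularity is needed.
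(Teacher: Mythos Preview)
Your block-system setup is sound and is essentially the idea behind the paper's proof, but two of your three routes have real gaps.

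In the $m=2$ case you get tangled following the $b_i$'s outward and never actually close the argument. The contradiction is already present inside $\Delta=\{a_1,a_2,a_3\}$: each $a_i$ has a twin $a_i^*$ with $\Gamma(a_i)=\Gamma(a_i^*)$, and since $u\in\Gamma(a_i^*)$ we get $a_i^*\in\Gamma(u)=\Delta$. Thus each block meeting $\Delta$ is a $2$-subset of the $3$-set $\Delta$, which is impossible. (Your parenthetical ``if $b_i=b_j$ then $a_i,a_j$ have the common neighbourhood $\{u,u',b_i\}$, putting us back in the $m=3$ case'' does not do this: it only shows $a_i,a_j$ are in the same block, which is perfectly consistent with $m=2$.) Your ``cleaner route'' is circular: you claim the component containing $u$ is $K_{|B|,3}$ and then deduce $|B|=3$ from $3$-regularity, but the claim that no vertex of $\Delta$ has a neighbour outside $B$ already needs $|B|=3$.

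The paper's proof is a two-line version of the same idea that sidesteps the case split entirely. It applies vertex-transitivity directly to $v_1\in\Gamma(u)$ to produce a twin $v_1'$; since $v_1'\in\Gamma(u)$, necessarily $v_1'\in\{v_2,v_3\}$, say $v_1'=v_2$. Repeating with $v_3$ forces $\Gamma(v_1)=\Gamma(v_2)=\Gamma(v_3)$, and then connectivity gives $K_{3,3}$. This is exactly your ``twin of $a_i$ lies in $\Delta$'' observation, used to force the block of $v_1$ to have size $3$ rather than to rule out $m=2$; it gets to the conclusion without ever naming the block system or its uniform size.
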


\begin{proof}
Let $\Gamma(u) =\Gamma(u') = \{v_1,v_2,v_3\}$. Since $\Gamma$ is vertex-transitive,
there exist $v_1'\in \V\Gamma$ such that $\Gamma(v_1) = \Gamma(v_1')$.
But then $v_1' \in \Gamma(u)$, implying that $v_1'$ is one of the vertices $v_2$ or $v_3$,
say $v_1' = v_2$.
By applying the same argument to $v_3$ in place of $v_1$, we see that $\Gamma(v_1) = \Gamma(v_2) = \Gamma(v_3)$. But then connectivity of $\Gamma$ yields
$\Gamma\cong \K_{3,3}$.
\end{proof}

\begin{theorem}\label{geometry}
Let $k\in \{3,4\}$ and let $\Gamma$ be a connected $k$-valent arc-transitive graph admitting
a non-trivial automorphism $g$  fixing no arc of $\Gamma$ and satisfying $\fpr_{\V\Gamma}(g) >1/3$.
Then $k=4$ and $\Gamma\cong \C(r,1)$ for some positive integer $r$, $r\ge 3$,
or $k=3$ and $\Gamma \cong \K_{3,3}$.
\end{theorem}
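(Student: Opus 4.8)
The plan is to reduce the statement to the existence of two distinct vertices with equal neighbourhoods, and then to classify the graphs having this property.

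\smallskip\noindent\emph{Setup.} I would put $G=\Aut(\Gamma)$, $F=\Fix_{\V\Gamma}(g)$, $W=\V\Gamma\setminus F$, and record three elementary facts: because $g$ fixes no arc, $F$ is independent, so the $k|F|$ edges leaving $F$ all land in $W$; for each $v\in F$ the permutation induced by $g$ on $\Gamma(v)$ has all orbits of length $\ge2$ (a single $3$-cycle if $k=3$; a $4$-cycle or a product of two transpositions if $k=4$); and $\fpr_{\V\Gamma}(g)>1/3$ is exactly the inequality $|W|<2|F|$.

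\smallskip\noindent\emph{Step 1 --- producing twins.} I would argue by contradiction that $\Gamma$ has two distinct vertices $u,u'$ with $\Gamma(u)=\Gamma(u')$. Assuming there are none, no $u\in W$ has $\Gamma(u)\subseteq F$ (else $\Gamma(u^g)=\Gamma(u)^g=\Gamma(u)$ while $u^g\ne u$). If $k=3$: a $u\in W$ with two neighbours $v\ne v'$ in $F$ would force $\Gamma(v)=\{u,u^g,u^{g^2}\}\subseteq\Gamma(v')$, using the $3$-cycle on $\Gamma(v)$ and that $v'$ is $g$-fixed, hence $\Gamma(v')=\Gamma(v)$; so each $u\in W$ has at most one neighbour in $F$ and $3|F|=\sum_{u\in W}|\Gamma(u)\cap F|\le|W|<2|F|$, a contradiction. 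If $k=4$: the analogous count works once I exclude a vertex $u\in W$ with exactly three neighbours $v_1,v_2,v_3\in F$ (and fourth neighbour $z\in W$). For this I would first show the $\langle g\rangle$-orbit of $u$ has length $2$ --- a longer orbit would, through $v_1$, either place a $g$-fixed vertex inside $\Gamma(v_1)$ (contradicting independence of $F$) or give $\Gamma(v_1)=\Gamma(v_2)$ --- so $u^g$ has the same fixed neighbours $v_1,v_2,v_3$ and $\Gamma(v_i)=\{u,u^g,c_i,c_i'\}$ with $\{c_i,c_i'\}$ a length-$2$ orbit of $g$, the three such orbits pairwise distinct (else $\Gamma(v_i)=\Gamma(v_j)$ for some $i\ne j$). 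I would then count common neighbours of the adjacent pairs $\{u,v_i\}$ and $\{v_i,c_i\}$: since arc-transitivity forces the number of common neighbours of an adjacent pair to be a single invariant $\lambda$, one gets $\lambda=0$, and the resulting triangle-free (in fact girth-$4$) picture is eliminated by a short local argument. Hence every $u\in W$ has at most two neighbours in $F$, and $4|F|=\sum_{u\in W}|\Gamma(u)\cap F|\le2|W|<4|F|$, a contradiction.

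\smallskip\noindent\emph{Step 2 --- classifying graphs with twins.} The relation ``$u=u'$ or $\Gamma(u)=\Gamma(u')$'' is a $G$-invariant equivalence on $\V\Gamma$; by vertex-transitivity all of its classes have a common size $m\ge2$, and since every neighbourhood is a union of classes (a neighbour's twins again lie in the neighbourhood), $m\mid k$. For $k=3$ this forces $m=3$, and Lemma~\ref{lem:unworthy3} gives $\Gamma\cong\K_{3,3}$. For $k=4$, either $m=4$ --- then $\Gamma(u)$ is one class, its common neighbourhood is the class of $u$, and the connected $4$-valent graph $\Gamma$ is complete bipartite between these two classes, so $\Gamma\cong\K_{4,4}\cong\C(4,1)$ --- or $m=2$, in which case the quotient of $\Gamma$ by the twin relation is a connected $2$-valent graph, i.e. a cycle of length $r\ge3$ with consecutive classes completely joined, and expanding each vertex back into its two-element class recovers $\Gamma$, so $\Gamma\cong\C(r,1)$ with $r\ge3$.

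\smallskip\noindent\emph{The hard part.} Everything above is routine except the $k=4$ case of Step 1: ruling out a vertex of $W$ with exactly three neighbours in $F$. That step needs both arc-transitivity (through the common-neighbour count) and a careful dissection of the rigid local configuration around such a vertex, and I expect it to be the bulk of the work.
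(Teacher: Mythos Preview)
Your overall architecture matches the paper's: reduce to the existence of twins (two distinct vertices with equal neighbourhoods), then classify. The classification step is fine, and your $k=3$ argument is in fact cleaner than the paper's: you exploit that $g$ is a $3$-cycle on each $\Gamma(v)$, $v\in F$, to force twins directly, whereas the paper does a short case analysis on the minimum distance between fixed vertices.

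The one place where your sketch is thin is exactly where you flag it, the ``three fixed neighbours'' subcase for $k=4$. Your deduction $\lambda=0$ is correct and useful, but it is essentially equivalent to the paper's quick disposal of the $K_5$ case (i.e.\ ruling out $u\sim u^g$); it does not by itself make the remaining configuration collapse. In the paper this remaining case is \emph{not} a short local argument: it is their Step~2, which runs for several paragraphs and two figures. After establishing that $u$ and $u^g$ share exactly the three neighbours $v_1,v_2,v_3$, the paper uses vertex-transitivity to produce, for each $v_i$, a partner sharing three neighbours, shows this partner must be the fourth neighbour $v_4$ of $u$ (and also $v_4^g$), and then tracks the forced common neighbours of $v_1,v_4,v_4^g$ and of $v_3,v_4,v_4^g$ until $v_5$ acquires too many neighbours. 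Your $\lambda=0$ gives triangle-freeness but does not obviously shortcut this chase; the invariant you would really need (common neighbours of vertices at distance~$2$) is not forced constant by arc-transitivity alone. So your final paragraph is right that this is the bulk of the work, but the phrase ``short local argument'' undersells it: expect to reproduce something close to the paper's Step~2, and budget accordingly.
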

\begin{proof}

Let us first consider the case $k=3$. Let $d$ be the minimal distance between two
 vertices fixed by $g$. Since $g$ fixes no arcs of $\Gamma$, we see that $d\ge 2$.
 If $d\ge 3$, then 
 every vertex $v$ in $F':=\V\Gamma \setminus \Fix_{\V\Gamma}(g)$ is adjacent to at most one vertex in $F:=\Fix_{\V\Gamma}(g)$, while every vertex $u\in F$ is adjacent to three vertices in $F'$. Therefore, $|F'| \ge 3 |F|$ and thus 
 $$\fpr_{\V\Gamma} = \frac{|F|}{|F| + |F'|} \le \frac{|F|}{|F| + 3|F|} = \frac{1}{4},$$
 a contradiction. Hence $d=2$. Let $v$ and $w$ be two vertices at distance $2$ fixed by $g$.
 If $\Gamma(v) = \Gamma(w)$, then by Lemma~\ref{lem:unworthy3}, $\Gamma\cong\K_{3,3}$.
 If $|\Gamma(v) \cap \Gamma(w)| =1$, then the vertex in $\Gamma(v) \cap \Gamma(w)$ is also
 fixed by $g$, contradicting $d=2$. Therefore $\Gamma(v) \cap \Gamma(w) = \{u_1,u_2\}$ with
 $u_1\not = u_2$. But then $g$ fixes the vertex in $\Gamma(v) \setminus \{u_1,u_2\}$,
 contradicting $d=2$. This complete the proof in the case $k=3$.

Let us now assume that $k=4$.
 We divide the proof into several steps. We start by recalling that a connected 
 $4$-valent arc-transitive graph containing two distinct vertices $w$ and $w'$ with $\Gamma(w)=\Gamma(w')$ is isomorphic to $\C(r,1)$ for some $r\ge 3$; for the proof, see \cite[Lemma~$4.3$]{Pwilson}, for instance.
 For the rest of the argument, we may thus assume that $\Gamma$ has no two distinct vertices with the same neighbourhood. 
\smallskip

\noindent\textsc{\bf Step~1: }For every four distinct vertices $v_1,v_2,v_3,v_4\in \Fix_{\V\Gamma}(g)$, we have $\Gamma(v_1)\cap\Gamma(v_2)\cap \Gamma(v_3)\cap \Gamma(v_4)= \emptyset$.
\medskip

\noindent We argue by contradiction and we suppose that there exist four distinct vertices $v_1,v_2,v_3,v_4\in\Fix_{\V\Gamma}(g)$ with  $\Gamma(v_1)\cap\Gamma(v_2)\cap \Gamma(v_3)\cap \Gamma(v_4)\ne \emptyset$. Let $w\in \Gamma(v_1)\cap\Gamma(v_2)\cap \Gamma(v_3)\cap \Gamma(v_4)$. Observe that $w^g\in \Gamma(v_1)\cap\Gamma(v_2)\cap \Gamma(v_3)\cap \Gamma(v_4)$ because $v_1,v_2,v_3,v_4$ are fixed by $g$, and $w^g\ne w$ because $g$ fixes no arc of $\Gamma$. Thus $\Gamma(w)=\{v_1,v_2,v_3,v_4\}=\Gamma(w^g)$, which is a contradiction.  
\medskip

\noindent\textsc{\bf Step~2: }For every three distinct vertices $v_1,v_2,v_3\in \Fix_{\V\Gamma}(g)$, we have $\Gamma(v_1)\cap\Gamma(v_2)\cap \Gamma(v_3)= \emptyset$.
\smallskip

\noindent We argue by contradiction and we suppose that there exist three distinct vertices $v_1,v_2,v_3\in \Fix_{\V\Gamma}(g)$ with $\Gamma(v_1)\cap\Gamma(v_2)\cap \Gamma(v_3)\ne \emptyset$.
Let $w\in \Gamma(v_1)\cap\Gamma(v_2)\cap \Gamma(v_3)$. Arguing as in \textsc{Step 1}, $w^g\in \Gamma(v_1)\cap\Gamma(v_2)\cap \Gamma(v_3)$ and $w^g\ne w$. Thus 
\begin{equation}
\label{eq:qe1}
w\,\, \textrm{and}\,\, w^g \textrm{ have three neighbours in common}.
\end{equation}

From this point onwards one could follow the proof of the Subcase II.A of  \cite[Theorem 3.3]{Pwilson}
to conclude that then $\Gamma \cong \K_{5,5} - 5\K_2$ (yielding a contradiction).
However, for the sake of completeness, we provide an independent proof of Step 2.

If $w$ is adjacent to $w^g$ in $\Gamma$, then from the arc-transitivity of $\Gamma$ we deduce $\Gamma$ is isomorphic to the complete graph $K_5$. Since $g$ fixes no arc of $\Gamma$, we have $|\Fix_{\V\Gamma}(g)|\le 1$ and hence $\fpr_{\V\Gamma}(g)\le 1/5<1/3$ and~\eqref{v3} holds. Thus, we may suppose for the rest of the proof of this step that $w$ is not adjacent to  $w^g$.

Let us now prove that $w^{g^2} = w$. If that were not the case, then  $w,w^g$ and $w^{g^2}$ are all adjacent to $v_1, v_2$ and $v_3$. Moreover, since $v_1$ and $v_2$
cannot have all neighbours in common, we also see that $w^{g^3} = w$.
Let $u_1$ be the fourth neighbour of $v_1$ other than $w,w^g$ and $w^{g^2}$. Since $g$ fixes no arcs, $u_1^g \not = u_1$, and hence
$u_1^g$, being adjacent to $v_1$, is one of  $w=w^{g^3},w^g$ and $w^{g^2}$. But then $u_1 \in
\{w^{g^2},w,w^g\}$, yielding a contradiction. This shows that $w^{g^2} = w$, as claimed.

Let $v_4\in \V\Gamma$ with $\Gamma(w)=\{v_1,v_2,v_3,v_4\}$. If $v_4\in \Fix_{\V\Gamma}(g)$, then $w\in\Gamma(v_1)\cap\Gamma(v_2)\cap \Gamma(v_3)\cap\Gamma(v_4)$ and $\Gamma(w)=\{v_1,v_2,v_3,v_4\}=\Gamma(w^g)$, that is, $w$ and $w^g$ are two distinct vertices with the same neighbourhood, contradicting our assumption. Therefore $v_4$ is not fixed by $g$. Thus $\Gamma(w^g)=\{v_1,v_2,v_3,v_4^g\}$ and $v_4\ne v_4^g$.
Note that since $w^{g^2} = w$, we have $v_4^{g^2} = v_4$.
 For the next two paragraphs Figure~\ref{fig1} might be of some help for following the argument.
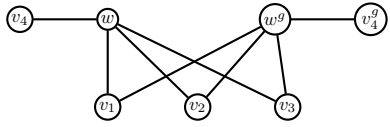
\begin{figure}[!htb]
\begin{center}
\begin{tikzpicture}[node distance=1.2cm,thick,scale=0.7,every node/.style={transform shape}]
\node[circle,draw,inner sep=1pt](v4){$v_4$};
\node[right=of v4,circle,draw, inner sep=1pt](w){$w$};
\node[below=of w,circle,draw, inner sep=1pt](v1){$v_1$};
\node[right=of v1,circle,draw, inner sep=1pt](v2){$v_2$};
\node[right=of v2,circle,draw, inner sep=1pt](v3){$v_3$};
\node[right=of w](d){};
\node[right=of d,circle,draw, inner sep=1pt](wg){$w^g$};
\node[right=of wg,circle,draw, inner sep=1pt](v4g){$v_4^g$};
\draw(v4) to (w);
\draw(w) to (v1);
\draw(w) to (v2);
\draw(w) to (v3);
\draw(wg) to (v1);
\draw(wg) to (v2);
\draw(wg) to (v3);
\draw(wg) to (v4g);
\end{tikzpicture}
\end{center}
\caption{Graph for the proof of Theorem~\ref{geometry}, I} \label{fig1}
\end{figure}
Since $\Gamma$ is vertex-transitive,~\eqref{eq:qe1} yields that for each of $v_i$, $i\in\{1,2,3\}$, there exists $v_i'\in \V\Gamma$ with $v_i$ and $v_i'$ having three neighbours in common. 

Our next claim is that that each of $v_i$, $i\in\{1,2,3\}$, has three neighbours in common
with $v_4$ and three neighbours in common with $v_4^g$. Due to the symmetry conditions,
it suffice to show that $v_1$ has three neighbours in common with $v_4$.

Since $w,w^g\in \Gamma(v_i)$, by the pigeonhole principle, either $w$ or $w^g$ is a common neighbour of $v_1$ and $v_1'$. Without loss of generality, we may assume that $w\in \Gamma(v_1)\cap\Gamma(v_1')$. As $\Gamma(w)=\{v_1,v_2,v_3,v_4\}$, we deduce $v_1'\in \{v_2,v_3,v_4\}$.
 
We first suppose that $v_1'\in\{v_2,v_3\}$.
Without loss of generality, we may assume that $v_1'=v_2$. Let us call $v_5$ the third vertex in common to $v_1$ and $v_2$. Clearly, $v_5$ cannot be fixed by $g$, otherwise $g$ fixes the arc $(v_1,v_5)$. Since $g$ fixes $v_1$ and $v_2$, we obtain that $v_5^g$ is a neighbour of both $v_1^g=v_1$ and $v_2^g=v_2$. Thus $\Gamma(v_1)=\{w,w^g,v_5,v_5^g\}=\Gamma(v_2)$, contradicting the fact that $\Gamma$ has no two distinct vertices with the same neighbourhood. This paragraph shows that 
$v_1' \not \in \{v_2, v_3\}$ and hence $v_1'=v_4$.
Since $v_1$ has three neighbours in common with $v_4$, we deduce that $v_1^g=v_1$ has three neighbours in common with $v_4^g$.

By symmetry, the argument in the previous four paragraphs can be applied also to the vertex $v_2$ and $v_3$. Therefore, we deduce that each of $v_1$, $v_2$ and $v_3$ has  three neighbours in common with $v_4$ and three neighbours in common with $v_4^g$.

Since $v_1$ has three neighbours in common with $v_4$ and three neighbours in common with $v_4^g$, we deduce that $v_1$, $v_4$ and $v_4^g$ must have at least two neighbours in common. These vertices cannot be $w$ or $w^g$, otherwise we contradicting Figure~\ref{fig1}. Thus, let us call $v_5$ one of the two neighbours in common to $v_1$, $v_4$ and $v_4^g$. As $g$ fixes no arcs, we have $v_5^g\ne v_5$. Thus $v_5^g$ is a neighbour in common to $v_1^g=v_1$, $v_4^g$ and $(v_4^g)^g=v_4$. The left side of Figure~\ref{fig2} might be of some help for following the rest of the argument.
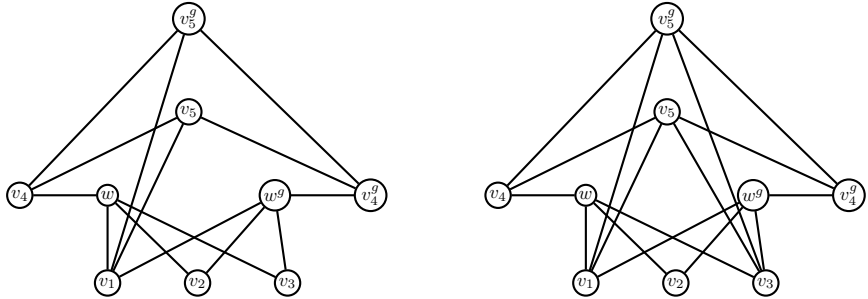
\begin{figure}[!htb]
\begin{center}
\begin{tikzpicture}[node distance=1.2cm,thick,scale=0.7,every node/.style={transform shape}]
\node[circle,draw,inner sep=1pt](v4){$v_4$};
\node[right=of v4,circle,draw, inner sep=1pt](w){$w$};
\node[below=of w,circle,draw, inner sep=1pt](v1){$v_1$};
\node[right=of v1,circle,draw, inner sep=1pt](v2){$v_2$};
\node[right=of v2,circle,draw, inner sep=1pt](v3){$v_3$};
\node[right=of w](d){};
\node[right=of d,circle,draw, inner sep=1pt](wg){$w^g$};
\node[right=of wg,circle,draw, inner sep=1pt](v4g){$v_4^g$};
\node[above=of d,circle,draw, inner sep=1pt](v5){$v_5$};
\node[above=of v5,circle,draw, inner sep=1pt](v5g){$v_5^g$};
\draw(v4) to (w);
\draw(w) to (v1);
\draw(w) to (v2);
\draw(w) to (v3);
\draw(wg) to (v1);
\draw(wg) to (v2);
\draw(wg) to (v3);
\draw(wg) to (v4g);
\draw(v1) to (v5);
\draw(v1) to (v5g);
\draw(v4) to (v5);
\draw(v4) to (v5g);
\draw(v4g) to (v5);
\draw(v4g) to (v5g);
\end{tikzpicture}
\phantom{AAAA}
\begin{tikzpicture}[node distance=1.2cm,thick,scale=0.7,every node/.style={transform shape}]
\node[circle,draw,inner sep=1pt](v4){$v_4$};
\node[right=of v4,circle,draw, inner sep=1pt](w){$w$};
\node[below=of w,circle,draw, inner sep=1pt](v1){$v_1$};
\node[right=of v1,circle,draw, inner sep=1pt](v2){$v_2$};
\node[right=of v2,circle,draw, inner sep=1pt](v3){$v_3$};
\node[right=of w](d){};
\node[right=of d,circle,draw, inner sep=1pt](wg){$w^g$};
\node[right=of wg,circle,draw, inner sep=1pt](v4g){$v_4^g$};
\node[above=of d,circle,draw, inner sep=1pt](v5){$v_5$};
\node[above=of v5,circle,draw, inner sep=1pt](v5g){$v_5^g$};
\draw(v4) to (w);
\draw(w) to (v1);
\draw(w) to (v2);
\draw(w) to (v3);
\draw(wg) to (v1);
\draw(wg) to (v2);
\draw(wg) to (v3);
\draw(wg) to (v4g);
\draw(v1) to (v5);
\draw(v1) to (v5g);
\draw(v4) to (v5);
\draw(v4) to (v5g);
\draw(v4g) to (v5);
\draw(v4g) to (v5g);
\draw(v3) to (v5);
\draw(v3) to (v5g);
\end{tikzpicture}
\end{center}
\caption{Graphs for the proof of Theorem~\ref{geometry}, II} \label{fig2}
\end{figure}
As $v_3$ has three neighbours in common with $v_4$ and three neighbours in common with $v_4^g$, we may apply the argument of the previous paragraph with the vertex $v_1$ replaced by $v_3$.  We deduce that $v_3$, $v_4$ and $v_4^g$ have at least two neighbours in common, which cannot be neither $w$ nor $w^g$. From the graph of the left side of Figure~\ref{fig2}, we see that these mutually common neighbours are $v_5$ and $v_5^g$, otherwise we contradict the fact that $v_4$ and $v_4^g$ have valency $4$. See now the graph on the right side of Figure~\ref{fig2}.
When we apply the argument in the previous paragraph to the vertex $v_3$ replaced by the vertex $v_2$, we deduce that $v_5$ and $v_5^g$ are neighbours of $v_2$, contradicting the fact that $\Gamma$ has valency $4$.
\medskip

\noindent\textsc{\bf Step~3: }$\fpr_{\V\Gamma}(g)\le 1/3$.
\smallskip

\noindent For simplicity, set $F:=\Fix_{\V\Gamma}(g)$ and $F':=\V\Gamma\setminus\Fix_{\V\Gamma}(g)$. Since $g$ fixes no arc of $\Gamma$, for every $v\in F$, we have $\Gamma(v)\subseteq  F'$. Moreover, from \textsc{Step~2}, we see that, for every $v\in F'$, we have $|\Gamma(v)\cap F|\le 2$. Thus, 
by counting the edges between $F$ and $F'$, we obtain
 $4|F|\le 2|F'|$. As $|F|+|F'|=|\V\Gamma|$, it follows $$\fpr_{\V\Gamma}(g)=\frac{|F|}{|F|+|F'|}\le \frac{|F|}{|F|+2|F|}\le \frac{1}{3},$$
 which contradicts our assumptions.
 \end{proof}

\color{black}
\section{The $2$-arc-transitive case}
\label{sec:reduction}

In this section we complete the proofs of Theorems~\ref{thrm:1} and \ref{thrm:2},
by considering the remaining cases of $4$-valent $2$-arc-transitive graphs and $3$-valent
arc-transitive graphs.
These cases are considered in \cite{LehPotSpi} in a more general context 
of arc-transitive locally quasiprimitive graphs
(that is, graphs, where the stabiliser of a vertex acts quasiprimitively on the neighbourhood)
for which the order of the vertex-stabiliser is bounded by some constant depending only on the valence of the graph. There it is proved that
 for every constant $c$ there are only finitely 
 many graphs in such a family that
  admit a non-trivial automorphism fixing more than $1/c$ vertices.
Since the order of $\Aut(\Gamma)_v$ is bounded by $11\, 664$ if $\Gamma$ is a connected $2$-arc-transitive $4$-valent graph \cite{Weiss}, and by
$48$ if $\Gamma$ is 
a connected arc-transitive $3$-valent graph \cite{tutte},
the result proved in \cite{LehPotSpi} implies that there can be
only a finite number of counterexamples to Theorems~\ref{thrm:1} and \ref{thrm:2}
(however, with the bound on their order being too large to be practical).
The analysis carried out in this section is thus aimed at a finite number of graphs only.

We first prove two reduction results simultaneously for the $4$-valent and the $3$-valent case
and split our analysis later.
Note that by Theorem~\ref{geometry}, the element $g$ fixing more than $\frac{1}{3}$ vertices
fixes an arc. Moreover, if $\Gamma$ is a connected $3$-valent graph with $G\le \Aut(\Gamma)$ acting arc-transitive but not $2$-arc-transitively, then the arc-stabiliser $G_{vw}$ is trivial.
This shows that in our analysis we may assume that the graph $\Gamma$ is $2$-arc-transitive
not only in the $4$-valent case but also when $\Gamma$ is $3$-valent case.
If a group of automorphisms $G$ of a graph $\Gamma$ acts transitively on the $2$-arcs of $\Gamma$, we say that $\Gamma$ is $(G,2)$-arc-transitive.

\begin{lemma}
\label{lem:Nnonab}
Let $\Gamma$ be a connected $k$-valent $(G,2)$-arc-transitive 
graph with $k\in\{3,4\}$ and  
let $g$ be a nontrivial element of $G$ with $\fpr_{\V\Gamma}(g) > 1/3$.
Suppose that $G$ contains a minimal normal subgroup $N$ 
such that $\Gamma/N$ is isomorphic to 
one of the graphs $\Psi_1, \ldots, \Psi_6, \C(r,s)$, $1\le s \le 2r/3$, $r\ge 3$ (if $k=4)$;
or to one of the graphs $\Lambda_1,\ldots,\Lambda_6$ (if $k=3$).
Then $\Gamma$ itself is isomorphic to one of these graphs and is therefore
not a counterexample to Theorem~\ref{thrm:1} or Theorem~\ref{thrm:2}.
\end{lemma}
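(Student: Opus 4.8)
The plan is to realise $\Gamma$ as a regular cover of one of finitely many explicit graphs and then to cut it down with Lemma~\ref{lemma:1}.

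\emph{Reduction to a cover.} Since $\Gamma/N$ is isomorphic to a $k$-valent graph, the normal quotient $\Gamma/N$ is itself $k$-valent, and then connectivity and arc-transitivity force $N$ to act semiregularly on $\V\Gamma$ (as in the discussion of normal quotients above). Hence $\pi\colon\Gamma\to\Gamma/N$ is a regular $N$-cover, the group $G/N$ acts faithfully and $(G/N,2)$-arc-transitively on $\Gamma_0:=\Gamma/N$, and $(G/N)_{\bar v}\cong G_v$. As $\Gamma_0$ carries the $2$-arc-transitive group $G/N$, Remark~\ref{rem:PX2AT} excludes $\Gamma_0\cong\C(r,s)$ with $r\ne 4$, so $\Gamma_0$ lies in the finite list $\{\Psi_1,\dots,\Psi_6,\C(4,1),\C(4,2)\}$ if $k=4$ and in $\{\Lambda_1,\dots,\Lambda_6\}$ if $k=3$ (for $k=3$ recall that $2$-arc-transitivity of $\Gamma$ may be assumed, by Theorem~\ref{geometry} and the triviality of the arc-stabiliser). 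Finally $g\notin N$ (as $N$ is semiregular and $g$ fixes vertices), so $\bar g:=Ng\ne 1$ and Lemma~\ref{lem:quofix} gives $\fpr_{\V\Gamma_0}(\bar g)\ge\fpr_{\V\Gamma}(g)>1/3$.

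\emph{Constraining $g$ modulo $N$.} Fix $v\in\Fix_{\V\Gamma}(g)$. Since $N\trianglelefteq G$ we have $[g,N]\subseteq N$, and since $N$ is semiregular $[g,N]\cap G_v=1$; thus $[g,N]_v=1$, and Lemma~\ref{lemma:1} (with $X=N$, $\omega=v$) yields $\Fix_{v^N}(g)=v^{\cent N g}$ and $\fpr_{\V\Gamma}(g)\le 1/[N:\cent N g]$, whence $[N:\cent N g]\le 2$. More precisely, $g$ fixes exactly $|\cent N g|$ vertices in each fibre it meets and none in the rest, so $\fpr_{\V\Gamma}(g)=f/(|\V\Gamma_0|\,[N:\cent N g])$ with $f\le|\Fix_{\V\Gamma_0}(\bar g)|$. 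If $N$ is non-abelian, then $N\cong T^m$ with $T$ non-abelian simple, and since a non-trivial automorphism of $T^m$ has centraliser of index at least $5$, we must have $\cent N g=N$, i.e.\ $g$ centralises $N$; but then $\cent G N$ is a non-trivial normal subgroup of $G$ with non-trivial vertex stabiliser, which via Lemma~\ref{cor:3.4} (and, for $k=3$, Lemma~\ref{lem:unworthy3} in the analogous role) forces $\Gamma\cong\C(r,s)$ — impossible, as $\Aut(\C(r,s))$ is solvable. And if $N$ is abelian with $[N:\cent N g]=2$, then $f>\tfrac23|\V\Gamma_0|$, so $\fpr_{\V\Gamma_0}(\bar g)>2/3$, which by inspection of the list occurs only for $\Gamma_0\in\{\C(4,1),\C(4,2)\}$. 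Thus outside these Praeger--Xu cases $g$ centralises $N$.

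\emph{Identifying $\Gamma$.} When $g$ centralises $N$, it fixes each fibre over a $\bar g$-fixed vertex either pointwise or not at all, and (since fibres are independent sets and edges between adjacent fibres form a perfect matching) the set $\bar F$ of base-vertices with pointwise-fixed fibre is a union of connected components of $\Gamma_0[\Fix_{\V\Gamma_0}(\bar g)]$ with $|\bar F|=\fpr_{\V\Gamma}(g)\,|\V\Gamma_0|>|\V\Gamma_0|/3$. Because $N$ is abelian the conjugation action of $G$ on $N$ factors through $G/N$, so $g$ centralising $N$ forces the whole normal closure $\overline M$ of $\bar g$ in $G/N$ to act trivially on $N$; combined with the fact that $N$ is a quotient of the (low-rank) first homology of $\Gamma_0$ on which $G/N$ acts arc-transitively, one computes that when $\overline M=G/N$ the only possibility is $N\cong\C_2$, so $\Gamma$ is the canonical double cover of $\Gamma_0$ — the standard double cover when $\Gamma_0$ is non-bipartite — and one checks directly that this is again on the list ($\Psi_1\mapsto\Psi_2$, $\Psi_5\mapsto\Psi_6$, $\Lambda_1\mapsto\Lambda_3$, $\Lambda_4\mapsto\Lambda_6$, etc.). The residual configurations — $\Gamma_0$ a Praeger--Xu graph $\C(4,s)$, or $\Gamma_0$ with $\Aut(\Gamma_0)\cong X\times\C_2$ so that $\overline M\ne G/N$ — are finished off by a direct analysis of the finitely many admissible regular covers of these explicit graphs, using $[N:\cent N g]\le 2$ together with the $2$-arc-transitivity of $\Gamma$ (which already rules out covers such as $\C(4,3)$, not $2$-arc-transitive). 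This last verification — showing that no ``unexpected'' cover of the listed base graphs survives the fixed-point constraint — is the part I expect to be the main obstacle.
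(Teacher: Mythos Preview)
Your overall skeleton matches the paper's: reduce to a regular cover of one of finitely many base graphs, then use $[N:\cent N g]\le 2$ from Lemma~\ref{lemma:1}. But two of your branches do not go through.

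\medskip
\noindent\textbf{The non-abelian case.} You correctly deduce that $g$ centralises $N$ (indeed $\cent N g$ would otherwise be a proper subgroup of $T^m$ of index at most $2$, impossible). The next step, however, is wrong: you invoke Lemma~\ref{cor:3.4} with $\cent G N$ in the role of the abelian normal subgroup, but that lemma \emph{requires} the normal subgroup to be abelian, and there is no reason for $\cent G N$ to be abelian here (it centralises $N$, but it need not be abelian itself; e.g.\ $G$ could have a second non-abelian minimal normal subgroup inside $\cent G N$). The same objection applies to your appeal to Lemma~\ref{lem:unworthy3} for $k=3$. The paper's argument for this case is quite different and does not need $\cent G N$ to be abelian: setting $C:=\cent G N$, one shows that if $v^N\subseteq v^C$ then every element of $N$ has order a $\{2,3\}$-number (because $nc\in G_v$ for suitable $c\in C$ and $o(nc)=\lcm(o(n),o(c))$), forcing $N$ solvable; while if $v^N\nsubseteq v^C$ then $C$ has two orbits (since $C_v^{\Gamma(v)}$ is transitive), $\Gamma$ is bipartite with these orbits as parts, and $N$ must swap the parts, giving $N$ a subgroup of index~$2$. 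Both alternatives contradict $N$ being a non-abelian (or $p\ge5$ abelian) minimal normal subgroup.

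\medskip
\noindent\textbf{The abelian case with $g$ centralising $N$.} Your deduction that the normal closure of $\bar g$ acts trivially on $N$ is fine, and when this closure is all of $G/N$ the irreducibility of $N$ does force $N\cong C_p$---but for \emph{some} prime $p$, not necessarily $p=2$; nothing you have said excludes $C_3$- or $C_5$-covers. Moreover, even when $N\cong C_2$, there is no ``canonical'' double cover: connected $C_2$-covers of $\Gamma_0$ are parametrised by non-zero classes in $H^1(\Gamma_0;\mathbb F_2)$, which has dimension $|\E\Gamma_0|-|\V\Gamma_0|+1>1$ for every graph on your list, so you would still need to determine which of these covers are $2$-arc-transitive and carry an element with the required fixed-point ratio. (A small side remark: in your index-$2$ branch, $\C(4,2)$ should not appear, since its maximal fixed-point ratio is $1/2<2/3$.) The paper does not attempt a structural shortcut here: for $N$ an elementary abelian $p$-group with $p\in\{2,3\}$ it enumerates, for each admissible pair $(\Gamma_0,H)$, all irreducible $\mathbb F_pH$-modules on which the relevant elements act with small co-fixed space, bounds $|N|$ (and hence $|\V\Gamma|$) except in two cases, and disposes of those two via an explicit cohomology computation. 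Your acknowledged ``main obstacle'' is essentially this computation, and the paper carries it out rather than circumventing it.
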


\begin{proof}
If $|\V\Gamma|\le 768$ and $k=4$ or if $|\V\Gamma| \le 10\,000$ and $k=3$,  the claim can be checked with a computer assisted computation using the census of connected $4$-valent 
$2$-arc-transitive graphs of order at most $768$ \cite{Primoz} and the
census of connected $3$-valent arc-transitive graphs of order at most $10\,000$ 
\cite{condercensus}.
We may therefore assume that $|\V\Gamma|$ exceeds these bounds.

Since $\Gamma/N$ is of the same valency as $\Gamma$, it follows that $N_v =1$ for every $v\in\V\Gamma$ and that $G/N$ acts faithfully on $\V\Gamma/N$. In particular, $Ng\in G/N$
is a non-trivial automorphism of $\Gamma/N$ fixing more than $1/3$ of the vertices.
Furthermore, $G/N$ acts transitively on the arcs of $\Gamma/N$ and if $k=4$ then it also acts transitively on the $2$-arcs of $\Gamma/N$. Consequently, if $\Gamma/N \cong \C(r,s)$, then by Remark~\ref{rem:PX2AT}, $r=4$ and $s\in \{1,2\}$.
By applying Lemma~\ref{lemma:1}
 with $N$ in place of $X$ and $\V\Gamma$ in place of $\Omega$, we
 conclude that $1/3 < \fpr_{\V\Gamma}(g) \le 1/|N:\cent N g|$, implying that
   $|N:\cent N g|\le 2$. 

Suppose first that $N$ is an elementary abelian $2$- or $3$-group. Since $N$ is a minimal normal subgroup of $G$, the action of $G/N$ on $N$ by conjugation endows $N$ with the structure of a $G/N$-irreducible module over a field $\mathbb{F}$ of size $2$ or $3$. 
In this case the proof can be completed by a straightforward computation with the computer algebra system, such as {\sc Magma}~\cite{magma}, 
in a way which we now describe.

For every graph $\Delta \in \{\Psi_1,\ldots, \Psi_6, \C(r,1),\C(r,2),\Lambda_1, \ldots, \Lambda_6\}$
we consider every $2$-arc-transitive subgroup $H$ of $\Aut(\Delta)$
 and contains a non-trivial element $h$ fixing more than $1/3$ of the vertices of $\Delta$. Thus $(\Delta,H,h)$ is our putative triple $(\Gamma/N,G/N, Ng)$.

 Next, we compute all the irreducible $\mathbb{F}H$-modules $V$. Since $|N:\cent N g|\le 2$,
the element $g$ either centralises $N$, or $p=2$ and $g$ acts as a transvection on $N$. Among all irreducible $\mathbb{F}H$-modules $V$, we select those with $\cent HV\ne 0$ or (in the case $p=2$) those admitting an element $h$ of $H$ with $|V:\cent V h|=2$. Thus, in this refined family, $V$ is our putative $N$. 

 When $k=3$, a direct computation shows that all such modules $V$ satisfy
 $|\V\Gamma/N|\cdot p^{\dim V} \le 10\,000$, contradicting our assumption that
$|\V\Gamma| > 10\,000$. 
   
 In the case $(k,p)=(4,2)$ we have checked that
\begin{itemize}
\item $|\V\Gamma/N|\cdot 2^{\dim V}\le 640$, or
\item $\Gamma/N\cong \Psi_5$,  $H\cong \Sym(7)$, $\dim_{\mathbb{F}_2} (V)=6$ and there is only one choice for $V$, or
\item  $\Gamma/N\cong \Psi_6$, $H\cong \Sym(7)\times \C_2$, $\dim_{\mathbb{F}_2}(V)=6$ and there is only one choice for $V$. 
\end{itemize}
Since $|\V\Gamma/N||N|=|\V\Gamma|>640$, we may consider only the last two possibilities. For these cases, we have computed the cohomology module of $H$ over $V$ and we have obtained the corresponding first and second cohomology groups. These groups have dimension zero and hence $G$ splits over $N$ and $N$ has a unique conjugacy class of complements in $G$. Thus $G$ is isomorphic to a subgroup of $\mathbb{F}_2^6\rtimes \Sym(7)$ when $\Gamma/N\cong \Psi_5$ and $G$ is isomorphic to a subgroup of $\mathbb{F}_2^6\rtimes (\Sym(7)\times \C_2)$ when $\Gamma/N\cong \Psi_6$. In these cases, we have constructed the abstract group $G$ and we have considered all the permutation representations of $G$ of the relevant degree (of degree $2^6\cdot 35$ when $\Gamma/N\cong \Psi_5$ and of degree $2^6\cdot 70$ when $\Gamma/N\cong \Psi_6$). Finally, we have checked that none of these permutation groups acts arc-transitively on a connected $4$-valent graph.

In the case $(k,p)=(4,3)$ we know that $g$ centralises $N$, and hence we may consider only those
$\mathbb{F}H$-modules $V$ with $\cent HV\ne 0$. The computation in this case is similar to the
case $p=2$, and again none of the modules $V$ yields an appropriate group $G$.

We may thus assume that $N$ is not an elementary abelian $2$- or $3$-group. Since $|N:\cent N g|\le 2$ and since $N$ has no index-$2$ subgroups in this case, we deduce $g\in \cent GN$ and hence $C:=\cent G N$ is a normal subgroup of $G$ not acting semiregularly on $\V\Gamma$.  

Suppose  $v^{N}\subseteq v^{C}$. Then, for every $n\in N$, there exists $c\in C$ with $v^{nc}=v$, that is, $nc\in G_v$. Since $n$ and $c$ commute, the order $o(nc)$ of $nc$ equals $\lcm\{o(n),o(c)\}$.
Since $G_v$ is a $\{2,3\}$-group, we thus see that $o(nc)$
is a power of $2$ times a power of $3$. Thus $N$ is a $\{2,3\}$-group. From Burnside's $p^\alpha q^\beta$-theorem,  $N$  is solvable and hence 
elementary abelian, contradicting our assumption.

We may thus assume that  $v^N\nsubseteq v^C$. Observe that $G_v^{\Gamma(v)}$ is a primitive group, implying that $C_v^{\Gamma(v)}$ is either transitive or trivial. In the latter case, it follows that $C_v =1$ contradicting the fact that $g\in C_v$. Hence $C_v$ acts transitively on
$\Gamma(v)$, implying that
 $C$ is either transitive on $\V\Gamma$, or $\Gamma$ is bipartite with bipartition given by the orbits of $C$ on $\V\Gamma$. As $v^N\nsubseteq v^C$, we have $v^C\ne \V\Gamma$ and hence $C$ is not transitive on $\V\Gamma$; thus $\Gamma$ is bipartite with bipartition given by the $C$-orbits. As $v^N\nsubseteq v^C$,  $N$ contains permutations interchanging the two parts of the bipartition of $\Gamma$. Thus $N$ contains a subgroup having index $2$, which is a contradiction because $N$ is not a $2$-group.  
\end{proof}
\color{black}

Let us now assume that Theorem~\ref{thrm:1} or Theorem~\ref{thrm:2}
fails due to a $4$-valent $2$-arc-transitive graph or due to a $3$-valent arc-transitive graph,
respectively. Let us consider the minimal counterexample, that is, let us work under the following assumption:

\begin{hypothesis}
\label{hyp:sAT}
{\rm
Let $k\in\{3,4\}$ and
let  $\Gamma$ be a smallest
connected $k$-valent $2$-arc-transitive graph not isomorphic to any of the
exceptional graphs $\Psi_1, \ldots, \Psi_6, \Lambda_1, \ldots, \Lambda_6$ or $\C(r,s)$ with $1\le s \le 2r/3$, $r\ge 3$, 
but admitting a non-trivial automorphism fixing more than $1/3$ of the vertices.
Among such automorphisms, pick one of smallest order.
In view of Theorem~\ref{geometry}, it follows that such an automorphism
 fixes an arc $(v,w)$ of $\Gamma$.
Let $G$ be a smallest $2$-arc-transitive subgroup of $\Aut(\Gamma)$ containing $g$.
Since $G_{vw}$ is a $2$-group if $k=3$, and is
a $\{2,3\}$-group if $k=4$, we see that the order $o(g)$ of $g$ satisfies
$o(g) \in \{2,3\}$ if $k=4$ and $o(g) = 2$ if $k=3$. Since the validity of Theorems~\ref{thrm:1} and \ref{thrm:2} was checked for the graphs in the census of $4$-valent $2$-arc-transitive graphs
of order at most $768$ \cite{Primoz} and  the census of $3$-valent arc-transitive graphs
of order at most $10\,000$, we assume that $|\V\Gamma| > 768$ if $k=4$ and $|\V\Gamma| > 10\,000$ if $k=3$.}
\end{hypothesis}

\begin{lemma}
\label{lem:biquasi}
Assuming Hypothesis~\ref{hyp:sAT}, it follows that $G$ has a unique normal subgroup,
which is non-abelian, has at most $2$ orbits on $\V(\Gamma)$ and does not
act semiregularly on $\V\Gamma$.
\end{lemma}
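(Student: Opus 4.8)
The plan is to consider an arbitrary minimal normal subgroup $N$ of $G$ and to prove, in this order, that $N$ has at most two orbits on $\V\Gamma$, that $N$ is non-abelian and does not act semiregularly, and finally that $G$ has no second minimal normal subgroup. I will use repeatedly the following consequence of Hypothesis~\ref{hyp:sAT}: as $\Gamma$ is isomorphic neither to $\C(r,1)$ (if $k=4$) nor to $\K_{3,3}$ (if $k=3$), \cite[Lemma~4.3]{Pwilson} (respectively, Lemma~\ref{lem:unworthy3}) shows that $\Gamma$ has no two distinct vertices with the same neighbourhood; hence, whenever $\Gamma$ is bipartite, a group of automorphisms of $\Gamma$ fixing one of the two parts setwise acts faithfully on that part, since an automorphism $x$ fixing one part pointwise satisfies $\Gamma(x(u))=x(\Gamma(u))=\Gamma(u)$ for every $u$ in the other part, forcing $x(u)=u$ and so $x=1$. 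For \textbf{Step~1} (at most two orbits), note that since $G_v^{\Gamma(v)}$ is $2$-transitive and hence primitive, the partition of $\Gamma(v)$ into the classes determined by the $N$-orbits is trivial. If it is indiscrete, then all neighbours of $v$ lie in a single $N$-orbit $\Delta$; as every element of $N$ fixes $\Delta$ setwise, the same holds for every vertex of $v^N$, so $\Gamma/N$ has valency one at $v^N$, and connectivity of $\Gamma$ forces $N$ to have at most two orbits. If it is discrete, then $N_v=1$ for all $v$, $N$ is semiregular, and $\Gamma/N$ is a connected $k$-valent graph on which $G/N$ acts faithfully and $2$-arc-transitively, with $Ng\ne1$ and $\fpr_{\V(\Gamma/N)}(Ng)\ge\fpr_{\V\Gamma}(g)>1/3$ by Lemma~\ref{lem:quofix}; since $|\V(\Gamma/N)|<|\V\Gamma|$, minimality of $\Gamma$ in Hypothesis~\ref{hyp:sAT} (here using Remark~\ref{rem:PX2AT} when $k=4$, and that split Praeger--Xu graphs are not arc-transitive when $k=3$) forces $\Gamma/N$ to be one of the graphs in Lemma~\ref{lem:Nnonab}, whence that lemma makes $\Gamma$ one of those graphs, a contradiction. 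So every minimal normal subgroup of $G$ has at most two orbits, and when it has exactly two, $\Gamma$ is bipartite with these orbits as its parts.

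For \textbf{Step~2} ($N$ non-abelian and not semiregular), suppose first that $N$ is semiregular, so that $N_v=1$ and $[g,N]_v\subseteq N\cap G_v=1$; Lemma~\ref{lemma:1} with $X=N$ and $\omega=v$ then gives $1/3<\fpr_{\V\Gamma}(g)\le\fpr_{v^N}(g)=1/|N:\cent N g|$, so $|N:\cent N g|\le2$. If $|N:\cent N g|=1$, then $g\in\cent G N$; as $N$ is regular on $v^N$ and the centraliser of a regular permutation group is semiregular, $g$ restricts to the identity on $v^N$, and then $g=1$ --- directly if $N$ is transitive, and by the faithfulness remark above if $N$ has two orbits (for then $g$ lies in the kernel of $G$ on these orbits) --- a contradiction. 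If $|N:\cent N g|=2$, then $N$ has a subgroup of index two, forcing $N$ to be an elementary abelian $2$-group; but then $\Gamma$ is a connected $k$-valent Cayley or bi-Cayley graph on $N$ whose connection set of size $k$ generates $N$, so $|\V\Gamma|\le2^k\le16$, contradicting the lower bound on $|\V\Gamma|$ in Hypothesis~\ref{hyp:sAT}. Hence no minimal normal subgroup of $G$ is semiregular; and if $N$ were abelian it would then satisfy $N_v\ne1$, so Lemma~\ref{lem:2orb} would produce two distinct vertices of $\Gamma$ with the same neighbourhood, contrary to the first paragraph. Therefore $N$ is non-abelian, is not semiregular, and $N_v\ne1$.

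For \textbf{Step~3} (uniqueness), suppose $N_1\ne N_2$ are minimal normal subgroups; then $N_1\cap N_2=1$, so $[N_1,N_2]=1$. By Step~2 neither is semiregular, hence neither is transitive (a group centralising a transitive permutation group is semiregular), so each has exactly two orbits, which by Step~1 and connectivity of $\Gamma$ must be the common bipartition $\{\Delta_1,\Delta_2\}$. On $\Delta_1$, the subgroups $N_1$ and $N_2$ induce commuting transitive subgroups of $\Sym(\Delta_1)$, hence both regular, so $(N_1)_v$ acts trivially on $\Delta_1$ for $v\in\Delta_1$, and the faithfulness remark forces $(N_1)_v=1$, contradicting Step~2. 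Thus $G$ has a unique minimal normal subgroup, which by Steps~1 and~2 is non-abelian, has at most two orbits on $\V\Gamma$, and is not semiregular. I expect the main obstacle to be the exclusion of an abelian (equivalently, semiregular) minimal normal subgroup in Step~2: for odd primes the case $g\in\cent G N$ collapses $g$ into $N$ and so eliminates it, but for the prime $2$ the element $g$ can act non-trivially on $N$, and one must fall back on the crude bound that a connected $k$-valent Cayley or bi-Cayley graph on an elementary abelian $2$-group has at most $2^k$ vertices --- which suffices only because the small graphs have already been cleared computationally in Hypothesis~\ref{hyp:sAT}.
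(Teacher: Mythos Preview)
Your proof is correct and follows the same broad outline as the paper's, but with two differences worth noting.

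First, a minor numerical slip in Step~2: in the bi-Cayley case, Lemma~\ref{lem:2orb} gives a generating set for $N$ of size at most $2k-2$, not $k$, so the bound is $|\V\Gamma|\le 2|N|\le 2^{2k-1}$ rather than $2^k$. This is harmless, since Hypothesis~\ref{hyp:sAT} provides $|\V\Gamma|>768$ (for $k=4$) or $|\V\Gamma|>10\,000$ (for $k=3$), and either bound suffices. The paper, incidentally, orders Step~2 the other way round: it first excludes abelian $N$ via Lemma~\ref{lem:2orb}, and then in the semiregular case observes that $|N:\cent N g|=2$ would force $N$ to be abelian, already impossible. This avoids re-deriving the size bound in the semiregular case.

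Second, your uniqueness argument in Step~3 is genuinely different from the paper's and somewhat cleaner. The paper argues as follows: given two distinct minimal normal subgroups $N$ and $M$, it takes an element $n\in N$ of prime order at least~$5$ (which exists since $N$ is non-abelian simple to a power), shows that if $N$ were contained in the kernel of the action of $G$ on the $M$-orbits then $nm^{-1}\in G_v$ for some $m\in M$, contradicting that $G_v$ is a $\{2,3\}$-group; hence $N$ acts faithfully on $\Gamma/M$, which has valency at most~$2$ since $M$ is not semiregular, forcing $N$ to be soluble. Your argument instead exploits directly that two commuting transitive subgroups of $\Sym(\Delta_1)$ must both be regular, so $(N_1)_v$ lies in the kernel of $N_1$ on $\Delta_1$, which is trivial by your faithfulness remark. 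This bypasses the arithmetic of $G_v$ entirely and would work in any setting where vertex-stabilisers are non-trivial and the graph has no repeated neighbourhoods.
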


\begin{proof}
Suppose that $G$ contains a minimal normal subgroup $N$ having at least $3$ orbits on $\V\Gamma$.
By \cite[Theorem 4.1]{11_1} it then follows that $N$ is semiregular,
and $\Gamma/N$ is $4$-valent with $G/N$
acting faithfully as as $s$-arc-transitive group of automorphisms.
By Lemma~\ref{lem:quofix}, $Ng\in G/N$ is a non-trivial automorphism of $\Gamma/N$
with $\fpr_{\V\Gamma/N}(Ng) > 1/3$. The minimality of $\Gamma$ now implies that
$\Gamma/N$ is one of the exceptional graphs $\Psi_1, \ldots, \Psi_6,\Lambda_1,\ldots,\Lambda_6$ or $\C(r,s)$ for some $r$ in $s$. But then, by Lemma~\ref{lem:Nnonab},
$\Gamma$ is not a counterexample to Theorem~\ref{thrm:1} or Theorem~\ref{thrm:2},
contradicting Hypothesis~\ref{hyp:sAT}.
We have thus shown that
every minimal normal subgroup of $G$ has at most two orbits on $\V\Gamma$.
Moreover, if a minimal normal subgroup $N$ has two orbits, then $\Gamma$ is
bipartite with $\{v^N, w^N\}$ being the bipartition of $\Gamma$.

Suppose now that $G$ contains an abelian minimal normal subgroup $N$.
By Lemma~\ref{lem:2orb}, either $N_v\not = 1$ and there exist
two distinct vertices $u,u'\in \V\Gamma$ such that $\Gamma(u) = \Gamma(u')$,
or $N_v= 1$ and $|\V\Gamma| \le 2|N| \le 2^{2k-1}$. The latter case contradicts our
assumption on the order of $\Gamma$, so we may assume that former case happens.
If $k=3$, then Lemma~\ref{lem:unworthy3} yields $\Gamma\cong \K_{3,3}$, while
if $k=4$, then it is easy to see that  $\Gamma\cong\C(r,1)$ 
(see \cite[Lemma 4.3]{Pwilson} for a proof).
We may therefore assume that no minimal normal subgroup of $G$ is abelian.

Suppose now that  a minimal normal subgroup $N$ of $G$ acts semiregularly on $\V\Gamma$.
By Lemma~\ref{lemma:1}, we see that $|N:\cent N g|=1$ or $2$. 
If $|N:\cent N g|=2$, then $N$ is abelian, a contradiction.
Hence $g$ centralises $N$ and
since $g\in G_{vw}$,
we see that $g$ fixes every element in $v^N \cup w^N = \V\Gamma$.
This contradiction shows that none of the minimal normal subgroups of $G$ acts semiregularly on $\V\Gamma$.

Suppose now that $G$ contains two distinct minimal normal subgroups $N$ and $M$.
Let $K_N$ and $K_M$ be the kernels of the actions of $G$ on $\V\Gamma/N$ and $\V\Gamma/M$ respectively.
 Suppose that $N\le K_M$. Then $v^N\subseteq v^{K_M}= v^M$. Let $n\in N$ be an  element of prime order at least $5$. We have $v^n\in v^{M}$ and hence $v^n=v^m$, for some $m\in M$. This gives $nm^{-1}\in G_v$. Since $o(nm^{-1})=\lcm\{o(n),o(m)\}$, it follows that $G_v$ contains an element of order divisible by a prime number at least  $5$. This contradiction shows that $N\nleq K_M$. This yields that $N$ acts faithfully as a group of automorphisms of the graph $\Gamma/M$. However,
 since $M$ is not semiregular, $\Gamma/M$ has valency at most $2$; thus the automorphism group of $\Gamma/M$ is soluble and hence so is $N$. However, this contradicts the fact that $N$ is non-abelian, and thus shows that our initial assumption on the existence of two minimal normal subgroups of $G$ was false. 
\end{proof}

We now continue our analysis under the assumption of Hypothesis~\ref{hyp:sAT}.
Let $N$ be the unique minimal normal subgroup of $G$. Since $N$ is
non-abelian, we see that for some non-abelian simple group $T$ we have:
\begin{equation}
\label{eq:N}
N \cong T_1\times T_2\times\cdots \times T_\ell \>\hbox{ with }\> T_i\cong T\>  \hbox{ for every } i, \>\>\hbox{ and }\>\>
G \lesssim \Aut(T)\wr\Sym(\ell);
\end{equation}
where by $X \lesssim Y$ we indicate that $X$ is a group isomorphic to a subgroup of $Y$.
Observe also that $\cent G N = 1$.
For $h\in G$, let $\sigma_h$ denote the permutation
of $\{1,\ldots,\ell\}$ mapping $i$ to $j$ if and only if $(T_i)^h = T_j$.
Then $\sigma \colon G \to \Sym(\ell)$, $h\mapsto \sigma_h$, is a homomorphism whose kernel equals
\begin{equation}
\label{eq:M}
M:=G\cap \Aut(T)^\ell \le \Aut(T)\wr\Sym(\ell).
\end{equation}
Note that every element $h\in G$ can now be written uniquely as
$(y_1,\ldots,y_\ell)\sigma_h$ for some $y_1, \ldots, y_\ell \in \Aut(T)$.
In particular, let $x_1, \ldots, x_\ell \in \Aut(T)$ be such that
\begin{equation}
\label{eq:g}
g=(x_1,\ldots,x_\ell)\sigma_g.
\end{equation}
Let $K \in \{N, M\}$.
 Since $K\unlhd G$, we see that $K_v\unlhd G_v$. Moreover, since $K_v\not=1$,
 the connectivity of $\Gamma$ implies that
$K_v^{\Gamma(v)}$ is a non-trivial normal subgroup of the $2$-transitive group $G_v^{\Gamma(v)}$. Hence $K_v^{\Gamma(v)}$ is transitive. Since $G_{vw}$ is
the stabiliser of the action of $G_v$ on $\Gamma(v)$, we thus see that $G_v=G_{vw}K_v$. Since $K_v^{\Gamma(v)}$ is transitive,
the quotient $\Gamma/K$ has valence $0$ or $1$ and $|K_v:K_{vw}| = k$. 
In the first case,
$K$ is transitive on $\A\Gamma$, implying that $G=G_{vw}K$, while in the second case, $K$ is edge-transitive and has two orbits on $\A\Gamma$ and $\V\Gamma$, the latter forming the bipartition of $\Gamma$.
In both cases, we see that 
\begin{equation}
\label{eq:Ktr}
K\>  \hbox{ is transitive on }\> \E\Gamma,
\end{equation}
and thus $G=KG_{\{v,w\}}$ with
$|G:KG_v| = |G:KG_{vw}| = 1$ or $2$, depending of whether $\Gamma/K$ has valence $0$ or $1$, respectively. In particular, since $K$ is contained in the
kernel of $\sigma$, this implies that
\begin{equation}
\label{eq:444} 
\sigma(G) = \sigma(G_{\{v,w\}}) \>
\hbox{ and thus }\>
 \sigma(G_{\{v,w\}}) \le \Sym(\ell) \hbox{ is transitive}.
\end{equation}

The structure of the vertex- arc- and edge-stabiliser in a group $G$ acting $2$-arc-transitively on a connected $k$-valent graph with $k\in\{3,4\}$
was first studied by Tutte in his seminar work \cite{tutte} for the case $k=3$, and by 
Weiss~\cite{Weiss} for the case $k=4$. It follows from their work that
$|G_v| \le 48$ if $k=3$ and $|G_v| \le 11\, 664$ if $k=4$.
Furthermore, the triples  $(G_v,G_{vw},G_{\{v,w\}})$ were completely determined (up to
isomorphism of triples of groups) by Conder and Lorimer in \cite{ConLor} for $k=3$,
and by the first-named author of this paper in \cite[Table~1]{Primoz} for $k=4$.
 In Table~\ref{tab:AT2}
we gather some information about these triples that will be frequently used in what follows. In particular, for each of the nine triples, we give
the number of elements of order $2$ and (if $k=4$) of order $3$ in $G_v$.
In the last column, the information on the minimal order of an element $h\in G_{\{v,w\}} \setminus G_{vw}$ is also provided.
\begin{center}
\begin{table}[h!]
\begin{tabular}{||c|c||c|c|c|c||}
\hline\hline
$k$ & type & $|G_v|$ & $|\{ x \in G_v : o(x) = 2\}|$ & $|\{ x \in G_v : o(x) = 3\}|$ & $o(h)$ \\
 \hline\hline
$3$ & $G_5$ &  48 &  19 & & 2 \\ \hline
$3$ & $G_4^1$, $G_4^2$  & 24 & 9 & & 2 \\ \hline
$3$ & $G_3$   & 12  & 7 & & 2 \\ \hline
$3$ & $G_2^2$  & 6 & 3 & & 4  \\ \hline
$3$ & $G_2^1$  & 6 & 3  & & 2 \\ 
 \hline\hline
$4$ & 7-AT  &$11\,664$ & 405 & 890 & 2 \\ \hline
$4$ & 4-AT  &432 & 45 & 80 & 2 \\ \hline
$4$ &  $S_3\times S_4$ & 144 & 39 & 26 & 2 \\ \hline
$4$ &   $C_3\rtimes S_4^*$ & 72 & 21 & 26 & 4 \\ \hline
$4$ &  $C_3\rtimes S_4$ & 72 & 21 & 26 & 2 \\ \hline
$4$ &  $C_3 \times A_4$ & 36 & 3 & 26 & 2 \\ \hline
$4$ &  $S_4$ & 24 & 9 & 8 & 2 \\ \hline
$4$ &  $A_4$x and $A_4$s & 12 & 3 & 8  & 2\\ \hline\hline
\end{tabular}
\medskip
\caption{Vertex-stabilisers of groups $G$ acting $2$-arc-transitively on connected $4$-valent graphs.}
\label{tab:AT2}
\end{table}
\end{center}
\vskip-5mm

With the information provided in Table~\ref{tab:AT2} we can now obtain a series
of useful bounds. For example,
by applying Lemma~\ref{eq:22} with $(G, N,v)$ in place of
$(X,X,\omega)$ we
we see that
\begin{equation}
\label{eq:4444}
|N:\cent N g| \> \le \> |g^G| \> < \>3 |g^G \cap G_v| \>\le\> 
 \left\{ 
 \begin{array}{ll} 3\cdot 19 = 57 & \hbox{ if } k=3 \\ 
    3\cdot 405 = 1215 & \hbox{ if } k=4 \hbox{ and } o(g) = 2\\ 
    3\cdot 890 = 2670 & \hbox{ if } k=4 \hbox{ and } o(g) = 3
  \end{array}
 \right.
\end{equation}
We now split the analysis into two case, depending on whether
$\sigma_g =1$ (or equivalently, $g\in M$) or not.
\medskip

\noindent\textsc{Suppose $\sigma_g\ne 1$.}
\medskip

\noindent 
 Let $\kappa$ be the length of a longest cycle in $\sigma_g$. In particular, $\kappa=o(g)\in \{2,3\}$. Without loss of generality, we may assume that $\sigma_g=(1\,2\cdots\,\kappa)\sigma'$, for some $\sigma'\in \Sym(\{\kappa+1,\ldots,\ell\})$. Since $g^\kappa = 1$, we see that
 $x_1x_2\cdots x_\kappa =1$.
Now consider the element $$h:=(1,x_1^{-1},(x_1x_2)^{-1},\ldots,(x_1x_2\cdots x_{\kappa-1})^{-1},1,1,\ldots,1)\in \Aut(T)^\ell,$$
and observe that
$$h^{-1}gh=(1,\ldots,1,x_{\kappa+1},x_{\kappa+2},\ldots,x_\ell)(1\,2\cdots\,\kappa)\sigma'.$$
Replacing the graph $\Gamma$ with the graph 
$\Gamma^h:=(\V\Gamma,(\E\Gamma)^h)$, the group $G$ with $G^h$ and hence $g$ with $g^h$, we may assume that 
$x_1=x_2=\cdots=x_\kappa=1$. A calculation in $T^\kappa$ gives that $\cent {T^\kappa}{(1\,2\cdots\,\kappa)}$ is the diagonal subgroup $\{(t,\ldots,t)\mid t\in T\}$ of $T^\kappa$. Thus $|\cent {T^\kappa}{(1\,2\cdots\,\kappa)}|=|T|$ and $|\cent N g|\le |T|\cdot |T|^{\ell-\kappa}=|T|^{\ell-\kappa+1}$. Hence
$|N:\cent N g|={|T|^\ell}/{|\cent N g|}\ge {|T|^{\ell}}/{|T|^{\ell-\kappa+1}}=|T|^{\kappa-1}.$
As $|T|\ge 60$, we can now deduce from \eqref{eq:4444}
that 
\begin{equation}
k=4,\> \kappa=o(g) = 2,\> \hbox{ and thus }\>
|N:\cent N g|  = |g^N| \le |g^G| <  1215.
\end{equation}
Assume that $\sigma$ has more than one cycle of length $2$. Without loss of generality we may assume that $\sigma=(1\,2)(3\,4)\sigma''$, for some $\sigma''\in \{5,\ldots,\ell\}$. As above, replacing  $g$ by a suitable $\Aut(T)^\ell$-conjugate, we may assume  $x_3=x_4=1$. A computation gives $|\cent {T^4}{(1\,2)(3\,4)}|=|\{(t,t,t',t')\mid t,t'\in T\}|=|T|^2$ and hence $1215>|N:\cent N g|\ge |T|^2\ge 3600$, which is a contradiction. Thus $\sigma=(1\, 2)$,
\begin{equation}
\label{eq:g11}
g=(1,1,x_3,\ldots,x_\ell)(1\,2)\>\>\>\>\>\hbox{ and }\>\>\>\>\>|N:\cent N g|=|T||T:\cent T {x_3}|\cdots |T:\cent T {x_\ell}|.
\end{equation}

Therefore $|T| \le |N: \cent N g| \le 3\cdot 405 = 1215$, implying
that \begin{equation}
\label{eq:possT}
T\in \{\Alt(5),\Alt(6),\PSL_2(7),\PSL_2(8),\PSL_2(11),\PSL_2(13)\}.
\end{equation}
 Let $V:=\langle g^x\mid x\in G_{\{v,w\}}\rangle$ and observe that $V\le G_{vw}$.
Let  $\Delta$ be the graph defined by $\V\Delta:=\{1,\ldots,\ell\}$ and 
$\E\Delta:=\{ \{r,t\} :  (r\, t) \in \sigma(V)\}$.
Since $(r\, t), (t,s) \in \sigma(V)$ implies $(r\, s) = (r\, t)^{(s\, r\, t)} = 
(r\, t)^{(t\, s)(r\, \,t)} \in \sigma(V)$, we see that every connected component of
$\Delta$ is a complete graph.
Let  $W_1, \ldots, W_k$ be the vertices of the connected components of $\Delta$.
Then for each $i\in \{1,\ldots,k\}$, the group $\sigma(V)$ contains all the transpositions $(r\, t)$ with $r,t\in W_i$, implying that
 $\Sym(W_1) \times \ldots \times \Sym(W_k) \le \sigma(V).$
Now observe that the group $\sigma(G_{\{v,w\}})$
preserves $\E\Delta$ and hence $\sigma(G_{\{v,w\}})$ is a subgroup of $\Aut(\Delta)$,
which by \eqref{eq:444} acts vertex-transitively. In particular, $\Delta$ is vertex-transitive and thus
$|W_i| = m$ for some
 $m \ge 2$ dividing $\ell$ and every $i\in\{1,\ldots,k\}$.
Hence 
\begin{equation}
\label{eq:sig}
\Sym(m)^{\ell/m} \le \sigma(V) \le \sigma(G_{vw}) \le \sigma(G_{\{v,w\}}) \le \Aut(\Delta) = \Sym(m) \wr \Sym(\ell/m).
\end{equation}
Since $|G_{vw}|$ divides $2^2\cdot 3^6$, this implies that
either $m=3$ and $\ell \in \{3, 6\}$ or $m=2$ and $\ell\in \{2,4\}$.


Suppose first that $(m,\ell) = (2,4)$. Since
$\sigma(G_{\{u,v\}})$ is transitive, \eqref{eq:sig} 
implies that $\sigma(G) = \sigma(G_{\{v,w\}}) = \Sym(2)\wr \Sym(2) \cong \D_4$,
and hence $\sigma(V) = \sigma(G_{vw}) = \C_2^2$.
In particular, $|G_{vw}|$ is divisible by $4$,
implying that $G_v$ is of type 7-AT, 4-AT or $S_3\times S_4$.
Moreover,
the kernel $M_{vw}$ of the restriction of $\sigma$ to $G_{vw}$
must be a group of odd order. 
Since $M_v$ is transitive on $\Gamma(v)$,
we see that $|M_v| = 4 |M_{vw}|$.
However, a direct computation
 shows that if $G_v$ is of type 4-AT or 7-AT,
then $G_v$ contains no normal subgroup of order $4$ times an odd integer,
implying that $G_v$ is of type $S_3\times S_4$.
In view of  \eqref{eq:4444}, \eqref{eq:g11} and Table~\ref{tab:AT2}, we see that
$|T|\, |T : \cent T {x_3}|\, |T : \cent T {x_4}| = |N : \cent N g| \le 3\cdot 39$ and thus $T\cong\Alt(5)$ and $x_3=x_4=1$.
Hence $T^4\le G \le \Aut(T)\wr\D_4$, with 
$T = \Alt(5)$, $\sigma(G) = \D_4$, and
$|g^G| < 3\cdot 39$ with $g=(1\, 2)$. We have checked with {\sc Magma} \cite{magma},
that no such group $G$ exists.

Suppose now that $(m,\ell) = (2,2)$.
Then $\sigma(G_{vw}) = \sigma(G_{\{v,w\}}) = \Sym(2)$,
$T^2\le G \le \Aut(T)\wr\Sym(2)$ with 
$T$  as in \eqref{eq:possT}, $\sigma(G) = \Sym(2)$, $g=(1\, 2)$
 and $|g^G| < 1215$.
If $G_v$ is of type 7-AT, then $2^4\cdot 3^6=11\,664=|G_v|$ divides $|G|$, which in turn divides $2 |\Aut(T)|^2$. By inspecting the groups in \eqref{eq:possT},
we see that only  $T=\PSL(2,8)$ satisfies this condition.
A computer assisted computation showed that in this case there are two groups $G$
satisfying the above conditions, however none of the
contains a subgroup isomorphic to the vertex-stabiliser of type 7-AT.
Hence $G_v$ is not of type 7-AT. But then, in view of  \eqref{eq:4444} and Table~\ref{tab:AT2}, we have $|g^G| < 3\cdot 45$. Checking the groups in \eqref{eq:possT} and all the groups $G$ satisfying $T^2\le G \le \Aut(T)\wr\Sym(2)$ and  $\sigma(G) = \Sym(2)$,
 we see that $|g^G| < 3\cdot 45$ holds only when $T=\Alt(5)$
 with $|g^G| = 60$ or $120$, implying that $G_v$ is of type $C_2\rtimes S_4$, 
 $C_2\rtimes S_4^*$, $S_3\times S_4$ or 4-AT. In particular,
 $|G_v| \ge 72$, and since $|G|\le 2|\Sym(5)|^2 = 28\,000$, we see that
 $|\V\Gamma| \le 400$. However, all $2$-arc-transitive graphs of order at 
 most $512$ are known (see \cite{Primoz}) and it can be easily checked
 that none of these graphs, with the exception of $\Psi_1, \ldots, \Psi_6$ and
 $\C(4,s)$ with $s\in\{1,2\}$, has a non-trivial automorphism fixing more than $1/3$ of the vertices.
 
Suppose now that $(m,\ell) = (3,3)$. Then 
\begin{equation}
\label{eq:33}
\sigma(G) = \sigma(G_{\{v,w\}}) = \Sym(3),\> T^3\le G \le \Aut(T)\wr\Sym(3), \>
T \hbox{ as in  \eqref{eq:possT}, and }  g=(1,1,x_3)(1\, 2).
\end{equation}
 If $x_3\not =1$, then in view of \eqref{eq:g11}, we have
 $60\le |T| < |G_v| / |T: \cent T {x_3}|$.
 By inspecting the centralisers of involutions of the simple group in \eqref{eq:possT}, we see that  $T=\Alt(5)$ and $G_v$ is of type 7-AT.
 However, $|\Aut(T) \wr \Sym(3)|$ is not divisible by $|G_v|=11\, 664$ in this case, yielding a contradiction. Hence $x_3=1$ and thus $g=(1\, 2)$.
If $G_v$ is of type 7-AT, then the divisibility condition $|G_v| \mid |\Aut(T) \wr \Sym(3)|$ yields $T\in \{\Alt(6), \PSL(2,8)\}$.
If $T=\Alt(6)$, then no group $G$ satisfying \eqref{eq:33} is such that  $|g^G| \le 1215$. If $T=\PSL(2,8)$, then there are 25 groups $G$ satisfying \eqref{eq:33},
with the minimum value of $|g^G|$ being $1080$. Now observe that
$g$ is not a square of any element in $\Aut(T) \wr \Sym(3)$.
A direct inspection of the vertex-stabiliser of type 7-AT reveals that there
are only $324$ involutions in $G_v$ that are non-squares, implying that
$|g^G \cap G_v| \le 3\cdot 324$, which contradicts the fact that $1080 \le |g^G| \le |g^G \cap G_v|$. Hence $G_v$ is not of type 7-AT.
 By \eqref{eq:4444} and Table~\ref{tab:AT2}, it follows that 
 $|g^G| \le 3\cdot 45$ and
 $|T| = |N:\cent N g| \le 3 | g^G \cap G_v| \le 3\cdot 45$, forcing $T=\Alt(5)$. 
 However, direct computation shows
 that no group $G$ satisfying \eqref{eq:33} such that $|g^G| \le 3\cdot 45$ exists in this case.

Suppose finally  that $(m,\ell) = (3,6)$. Then $\sigma(G_{uv})$ contains a subgroup isomorphic
to $\Sym(3)\times\Sym(3)$. Inspecting the orders of the arc-stabilisers in Table~\ref{tab:AT2},
we see that $(G_v,G_{vw},G_{\{v,w\}})$ is of type 7-AT, 4-AT or $S_3\times S_4$
and that $\sigma(V) = \sigma(G_{vw}) =\Sym(3)\times\Sym(3)$.
Similarly as in the case $(m,\ell) = (2,4)$, we see 
that $M_{vw}$ has odd order and thus $G_v$ contains a subgroup
of order $4$ times an odd number, which rules out the types 4-AT and 7-AT.
But then, in view of \eqref{eq:g11}, we see that
$|N : \cent N g| = |T| |T:\cent T {x_3}|\cdots |T:\cent T {x_6}|  \le 3\cdot 39$, implying that
 $T\cong\Alt(5)$ and $x_3 = \ldots = x_6 = 1$.
 Now let $h$ be an element of minimal order in $G_{\{v,w\}} \setminus G_{vw}$.
 According to Table~\ref{tab:AT2}, we see that $o(h) = 2$.
 Consider the group $L:=\langle M, g, h\rangle$. Since $M$ is transitive on $\E\Gamma$
 (see \eqref{eq:Ktr})  and since $h$ swaps the arc $(v,w)$, we see that $L$ is an arc-transitive subgroup
 of $G$ containing $g$. By Theorem~\ref{the:GHAT}, $L$ is $2$-arc-transitive, and by Hypothesis~\ref{hyp:sAT}, it follows that $G=L$.
 Now,  since 
 $G_{\{v,w\}} = G_{vw}\langle h \rangle$ and since
 $\sigma(G_{\{v,w\}})$ is transitive on $\{1,\ldots, 6\} = \V\Delta$, we see that
 $\sigma(h)$ swaps the two connected component $W_1$ and $W_2$ of $\Delta$.
  By construction,
 one connected component of $\Delta$ contains the vertices $1$ and $2$, and
 without loss of generality, we may assume that $W_1= \{1,2,3\}$ and $W_2=\{4,5,6\}$
 and hence that $\sigma(h)=(1\, 4)(2\, 5)(3\, 6)$. But then we see
 that $\langle T_1, T_2, T_4, T_5\rangle$ is normalised by $M,g$ and $h$ and thus by 
 $G = \langle M,g,h\rangle$, which contradict the assumption that $N$ is a minimal normal 
 subgroup of $G$.
%
\smallskip

\noindent\textsc{Suppose $\sigma_g= 1$.}
\medskip

\noindent Then $g=(x_1,x_2,\ldots,x_\ell) \in M$, where $M$ is as in
\eqref{eq:M}.
Let $h$ be an element of $G_{\{v,w\}}\setminus G_{vw}$ of minimal possible order. 
From the information given in Table~\ref{tab:AT2}, it follows that
 $o(h)\in \{2,4\}$; moreover, $o(h)=4$ if and only if $k=4$ and $(G_v,G_{vw},G_{\{v,w\}})$ is of type $C_3\rtimes S_4^*$, or $k=3$ and $(G_v,G_{vw},G_{\{v,w\}})$ is of type $G_2^2$,
Now observe that $\langle M,h\rangle = M\langle h\rangle \le G$  acts arc-transitively on $\Gamma$. Since $G$ is a smallest arc-transitive group of $\Gamma$ containing the element $g$, it follows that $G=M\langle h \rangle$. Since $M$
is the kernel of the homomorphism $\sigma\colon G \to \Sym(\ell)$,
we see that
$\langle h \rangle = \sigma(\langle h \rangle) = \sigma(G) = \sigma(G_{\{u,v\}})$, which is by \eqref{eq:444} a transitive subgroup of $\{1,\ldots,\ell\}$. Hence $\ell\in \{1,2,4\}$. 

For a finite simple group $X$, embedded as the group of inner automorphisms into $\Aut(X)$, and an integer $r\ge 2$ such that $X$, (respectively, $\Aut(X)$) contains an element of order $r$, let 
\begin{eqnarray*}
\iota(X,r) &:= &\min \{|X: \cent X x| : x \in X, o(x)=r\};\\
\iota_*(X,r) & := &\min \{|X: \cent X x| : x \in \Aut(X), o(x)=r\}; \\
m(X) & := & \min\{|X:H| : H\le X, H\not = X\}.
\end{eqnarray*}
Note that $m(X)\le \iota_*(X,r) \le \iota(X,r)$ and that $m(X)$ equals the minimal
degree of a faithful transitive permutation representation of $X$.

Now observe that
 $|N : \cent N g| = |T : \cent T {x_1}|\, |T : \cent T {x_2}| \ldots |T : \cent T {x_\ell}|$.
 Let $\alpha:=\{ i\in \{1,\ldots,\ell\} : x_i\not =1\}$ and observe that $\alpha\ge 1$.
 Inequality \eqref{eq:4444} and Table~\ref{tab:AT2} now
imply that 
 \begin{equation}
 \label{eq:mT}
  m(T)^\alpha \le \iota_*(T,o(g))^\alpha < |g^G \cap G_v| \le \left\{
    \begin{array}{ll} 
     57; & \hbox{ if } k=3;\\ 
    1215; & \hbox{ if } k=4 \hbox{ and } o(g)=2 ;\\ 
    2670; & \hbox{ if } k=4 \hbox{ and }o(g)=3.
   \end{array}
    \right.
 \end{equation}

The values of $m(T)$ for  finite simple groups $T$ are known
and can be found, for example, in~\cite[Table~4]{GMPS} for the groups of Lie type (this table takes in account the corresponding table in~\cite[Table~$5.2A$]{KL} together with the corrections of Mazurov and Vasil'ev in~\cite{33}) and  in \cite{wilsonArXiv} or \cite{ATLAS}, for sporadic groups. In Table~\ref{table:1}, containing all non-abelian simple groups $T$ with $m(T)<2670$, we summarise the relevant information; note that the last two columns 
give a condition for the group in the corresponding row satisfies $m(T)<2670$ and $m(T)<117$,
respectively (the meaning of the bound $117$ will become apparent later).
\begin{table}[!ht]
\begin{tabular}{|c|c|c|c|c|c|}\hline
Group $T$& $m(T)$& $m(T) < 2670$ & $m(T) < 117$ \\
\hline\hline
$\Alt(n)$& $n$  &$5\le n\le 2669$ & $5\le n\le 116$  \\
\hline
$\PSL_2(q)$& $q+1$ &$8\le q\le 2663$, $q\neq 9,11$ & $8\le q\le 113$, $q\not = 9,11$ \\
$\PSL_3(q)$& $(q^3-1)/(q-1)$ & $2\le q\leq 49$ & $2\le q\le 9$ \\
$\PSL_4(q)$& $(q^4-1)/(q-1)$     &$3\le q\le 13$ &  $3\le q\le 4$\\
$\PSL_5(q)$&  $(q^5-1)/(q-1)$    &$2\le q\le 5$  &  $q=2$ \\
$\PSL_d(2)$&  $2^d-1$     &$6\le d\le 11$ & $d=6$  \\
$\PSL_2(11)$ & $11$ & true & true \\
$\PSL_6(3)$, $\PSL_7(3)$, $\PSL_6(4)$ & $364, 1093, 1365$ &  true & false \\
 \hline

$\PSp_4(q)$& $(q^4-1)/(q-1)$     &$4\le q\le 13$ & $q=4$ \\
$\PSp_{2m}(2)$& $2^{m-1}(2^m-1)$        &  $3\le m\le 6$ &$m =3$ \\
$\PSp_6(3), \PSp_6(4)$ & $364,1365$ & true & false \\
\hline

$\PSU_3(q)$ &  $q^3+1 $   & $3\le q \le 13$, $q\neq 5$ & $3\le q \le 4$ \\
$\PSU_4(q)$ & $(q+1)(q^3+1)$     &$2\le q\le 5$ & $2\le q \le 3$ \\
$\PSU_3(5)$   & $50$ & true & true \\
$\PSU_5(2)$, $\PSU_6(2)$, $\PSU_5(3)$    & $165, 672, 2440$ & true & false \\
\hline

$\mathrm{P}\Omega_{2m}^{+}(2)$,  & $2^{m-1}(2^m-1)$  & $4\le m \le 6$  & false\\
$\mathrm{P}\Omega_{2m}^{-}(2)$&   $2^m(2^{m-1}-1)$      &$4\le m \le 6$ & $m=4$\\
$\mathrm{P}\Omega_7(3)$, $\mathrm{P}\Omega_8^{-}(3)$, $\mathrm{P}\Omega_8^{+}(3)$ &   $351, 1066, 1080$    & true & false \\
\hline

$G_2(3)$, $G_2(4)$&  $351,416$   & true & false\\
$^{2}B_2(8)$, &  $65$      & true & true \\
$^{3}D_4(2)$, $^{2}B_2(32)$, $^{2}F_4(2)'$ &  $819, 1025, 1755$    & true & false \\
\hline

${\rm M}_{11}$, ${\rm M}_{12}$, ${\rm M}_{22}$, ${\rm M}_{23}$, ${\rm M}_{24}$  & $11,12,22,23,24$      & true  & true \\
${\rm J}_1$,  ${\rm McL}$& $266,275$       & true & false \\
$ {\rm J}_2$, ${\rm HS}$& $100,100$       & true & true  \\
${\rm Co}_2$, ${\rm Co}_3$, ${\rm Suz}$, ${\rm He}$& $2300,276,1782,2058$       & true & false \\
\hline
\end{tabular}
\medskip
\caption{Simple groups $T$ with $m(T)< 2670$.}
\label{table:1}
\end{table}
%
We will now consider the
 possible values of $\ell$ case by case and show that cases $\ell = 4$ and $\ell=2$ lead
 to a contradiction.
 \smallskip

Suppose first that $\ell =4$.
Recall that in this case $o(h) = 4$ and $(G_v,G_{vw},G_{\{v,w\}})$ is of type $C_3\rtimes S_4^*$
if $k=4$ or of type $G_2^2$ if $k=3$. If $k=3$, then $m(T) \le \iota_*(T,2) \le 8$,
implying that $T$ embeds into $\Sym(n)$ for some $n\in \{5,6,7,8\}$.
But then $T$ embeds into $\Sym(m)$ for $m\le 8$. 
Hence either $T=\Alt(n)$ for $n\in \{5,\ldots,8\}$ or
$T=\PSL(3,2)$. However, a closer inspection of these groups shows that none of them satisfies
$\iota_*(T,3) \le 8$.
We may therefore assume that $k=4$ and that $(G_v,G_{vw},G_{\{v,w\}})$ is of type $C_3\rtimes S_4^*$.
From the information given in \cite[Table 1]{Primoz}, we see that
$|G_v|=2^3\cdot 3^2$, $|G_{vw}|=2\cdot 3^2$, $|G_{\{v,w\}}|=2^2\cdot 3^2$,
the Sylow $3$-subgroup $P$ of $G_{vw}$ is normal in $G_{vw}$, $P\cong \C_3^2$, and 
 $h^2$ inverts every element of $P$.
Since $G=MG_{\{v,w\}}$, we see that $G/M \cong G_{\{v,w\}}/(M \cap G_{\{v,w\}})
= G_{\{v,w\}}/M_{\{v,w\}}$. 
Without loss of generality, $h=(y_1,y_2,y_3,y_4)(1\,2\,3\,4)$ for some $y_i\in \Aut(T)$, implying that $G/M \cong \C_4$. But then $|M_{\{v,w\}}| = 3^2$ and
since $|M_{\{v,w\}}:M_{vw}| \le 2$, we see that
$M_{\{v,w\}} = M_{vw}$ and thus $M_{vw}=P$; in particular, $o(g) = 3$ and $g^{h^2} = g^{-1}$. 
 Now,  $h^2=(y_4y_1,y_1y_2,y_2y_3,y_3y_4)(1\,3)(2\,4)$, and thus
$ (x_1^{-1}, x_2^{-1},x_3^{-1},x_4^{-1}) =    (x_3^{y_2y_3},x_4^{y_3y_4},x_1^{ y_4y_1}, x_2^{y_1y_2})$.
 Since $g\not = 1$, this implies that at least two of the elements
 $x_1,\ldots,x_4$ are non-trivial. In view of \eqref{eq:mT}
  we see that
 $m(T) \le \iota_*(T,3)^2 < 3\cdot 26 \> =\> 78$, and hence $\iota_*(T,3) \le 8$.
However, as we have shown in case $k=3$, no simple group $T$ satisfies this condition.
 This contradiction shows that $\ell \not = 4$.
\smallskip

Suppose now that $\ell =2$.
%
%
Then $h=(y_1,y_2)(1\,2)$ for some $y_1, y_2\in \Aut(T)$,
implying that $G_{\{v,w\}}/M_{\{v,w\}}\cong G/M\cong \C_2$.
Since $g\in M$, the minimality of $G$ then implies that $M$ is not arc-transitive,
showing that $\Gamma$ is bipartite with $\{v^M,w^M\}$ being the bipartition,
and that $|M_v| = |M|/ |v^M| = |G|/|\V\Gamma| = |G_v|$. In particular, $M_v = G_v$ and
$M$ is the kernel of the action of $G$ on the bipartition.
Consider the groups $L_1:=M \cap ( \Aut(T_1)\times \{1\})$ and
$L_2:=M \cap (\{1\} \times \Aut(T_2))$.
Note that both $L_1$ and $L_2$ are normal in $M$, that $L_1\cap L_2 =1$ ,
and that conjugation by $h$ swaps $L_1$ with $L_2$. 
Hence $L:=\langle L_1, L_2 \rangle \cong L_1\times L_2$ is a normal subgroup of $G=M\langle h \rangle$. Moreover, since $T_1 \times T_2=N\le M$, we see that $T_i \le L_i$ for $i\in\{1,2\}$.

Suppose that $g$ is contained in one of the group $L_1$ or $L_2$. Without loss
of generality, we may assume that $g\in L_1$, and thus $(L_1)_{vw} \not =1$.
Since $L_1$ is normal in $M$ and since
$v^M \cup w^M = \V\Gamma$, we see that
 $(L_1)_u\not =1$ for every $u\in \V\Gamma$.
The connectivity of $\Gamma$ then implies that $(L_1)_u^{\Gamma(u)}\not =1$, and since $G_u^{\Gamma(u)}$ is primitive, we see that $(L_1)_u^{\Gamma(u)}$ is transitive
for every $u\in \V\Gamma$.
Hence $\Gamma/L_1 \cong \K_2$, implying that $v^{L_1} = v^M$.
Therefore $M=L_1M_v$ and thus $M/L_1  \cong L_1M_v / L_1 \cong M_v/(L_1)_v$.
Since $M_v$ is soluble, so is $M/L_1$; however, $M/L_1$ contains a subgroup isomorphic
to $L_2$, which is non-soluble since it contains $T_2$.

This contradiction shows that $g$ is contained neither in $L_1$ nor in $L_2$ and thus
$g=(x_1,x_2)$ with both $x_1$ and $x_2$ nontrivial. In view of inequality \eqref{eq:mT}
(where we may assume $\alpha \ge 2$)
and Table~\ref{tab:AT2},
we thus see that 
 \begin{eqnarray*}
\iota_*(T,2) \le 7 &\hbox {if}& k=3\\
\iota_*(T,2) \le 34\> \hbox{ or }\> \iota_*(T,3) \le 51 &\hbox {if}& k=4 \hbox{ and } G_v \hbox{ is of type 7-AT,}\\
\iota_*(T,2) \le 11\> \hbox{ or }\> \iota_*(T,3) \le 15 &\hbox {if}& k=4 \hbox{ and } G_v \hbox{ is not of type 7-AT.}
 \end{eqnarray*}

We have already seen that no non-abelian simple group $T$ satisfies $\iota_*(T,2) \le 7$.
We may thus assume that  $k=4$.
If $G_v$ is not of type 7-AT, then one can eaily use a computer algebra system, such as {\sc Magma} \cite{magma}, to check that none of the groups $T$ in Table~\ref{table:1} with $m(T)\le 15$
satisfies the  second of the above conditions. Similarly, if $G_v$ is of type 7-AT,
then $|G_v| = 11664$ and since $G\le \Aut(T) \wr \Sym(T)$, we see that
$11664$ divides $2 |\Aut(T)|^2$. By first checking the groups $T$  in Table~\ref{table:1} with  $m(T) \le 51$ against this divisibility condition and then, for the remaining groups, 
directly computing the values $\iota_*(T,r)$, $r\in\{2,3\}$, one sees that no groups $T$ satisfying the first of the above conditions exists either. This shows that $\ell \not = 2$.
\smallskip

We may thus assume for the rest of the proof that $\ell = 1$; that is $T \le G\le \Aut(T)$
where  $T$ is the unique minimal normal subgroup of $G$ and $\cent G T =1$.
If $T_v=1$, then Lemma~\ref{lemma:1} implies that $|T : \cent T g| = 1/\fpr_{v^T}(g) < 3$,
implying that $g$ centralises $T$, contradicting the fact that $\cent G T = 1$.
Since $T$ is normal in $G$, we thus see that  $T_v$ is transitive on
$\Gamma(v)$ and $\Gamma/T \cong \K_2$ or $\K_1$. 
We will now split our analysis depending on the valence of $\Gamma$.
\smallskip

Suppose first that $k=4$.
Let $H:=\langle T, h\rangle = T\langle h \rangle$ and observe that $H$ is arc-transitive.
Moreover, $T = H$ (which happens if $\Gamma/T \cong \K_1$) or
$T$ has index $2$ in $H$ (which happens if $\Gamma/T \cong \K_2$).
In both cases we have $T_v = H_v$, implying that $T_v$ is isomorphic to
one of the nine  possible vertex-stabilisers 
of $4$-valent, $2$-arc-transitive graphs given
in Table~\ref{tab:AT2}. 
Now, observe that the vertex-stabiliser of type 4-AT or 7-AT contains
no proper normal subgroup isomorphic to one of the stabilisers
in Table~\ref{tab:AT2}. This implies that either $T_v = G_v$ (and thus $g\in T_v$ 
and $|G:T| \le 2$) or $G_v$ is not of type 4-AT or 7-AT.
Having in mind that $g\in T$ implies that the expression $\iota_*(T,o(g))$ in
\eqref{eq:mT} can be substituted with $\iota(T,o(g))$ and using the information
from Table~\ref{tab:AT2}, we can now conclude that one of the following holds
(here part (b) corresponds to
the case when $G_v$ is of type 4-AT and part (c) to the case when 
$G_v$ is of type 7-AT): 
\begin{itemize}
\setlength{\itemsep}{1pt}
\item[{\rm (a)}]
 $\iota_*(T,2) < 117$ or $\iota_*(T,3) < 78$;
\item[{\rm (b)}]
$|T|$ is divisible by $432$,
 and $\iota(T,2) < 135$ or $\iota(T,3) < 240$;
\item[{\rm (c)}]
$|T|$ is divisible by $11664$, 
 and  $\iota(T,2) < 1215$ or $\iota(T,3) < 2670$.
 \end{itemize}

Non-abelian simple groups $T$  satisfying one of the above conditions
can now be determined using purely theoretical argument or in combination
with computer assisted computations.
For example, for the alternating groups $\Alt(n)$, $n\ge 5$, it is well-known and easy to see that:
$$
\iota(\Alt(n),3) = \iota_*(\Alt(n),3) = 2{n \choose 3},\>
\iota_*(\Alt(n),2) = {n \choose 2},\>
\iota(\Alt(n),2) = 3{n \choose 4} \hbox{ for } n\neq 8, \iota(\Alt(8),2) = 105.
$$
From this we see that $\Alt(n)$ satisfies
(a) if and only if $5\le n\le 15$, that it never satisfies (b), and
that it satisfies (c) if and only if $15\le n \le 16$.
To determine the non-alternating groups $T$ satisfying (a),
we have considered all the groups $T$ in Table~\ref{table:1} satisfying $m(T)<117$
 (see the last column of the table),
and then compute the values $\iota_*(T,2)$ and $\iota_*(T,3)$ directly with {\sc Magma}.
%
The groups $T$ satisfying conditions (b) and (c)
were determined by first checking divisibility conditions on $|T|$ and then checking the bounds on $\iota(T,r)$ directly with {\sc Magma}. 
%
%
This computations resulted in the following list of groups $T$ satisfying at least one of the conditions (a), (b) and (c):
\begin{eqnarray*}
& & \Alt(n) \hbox { with } 5\le n \le 16,\,
 \PSL_2(8), \PSL_2(11), \PSL_2(13), \PSL_2(16), \PSL_2(25), \PSL_3(2), \PSL_3(3),\PSL_4(3),\\
& &  \PSU_3(3),\PSU_4(2),  \PSU_4(3), \PSU_5(2),  \PSU_6(2),
\PSp_6(2), \PSp_6(3), \PSp_{10}(2), \POmega_7(3), G_2(3)
\end{eqnarray*}
%
%
%
To deal with these possible groups $T$ and corresponding groups $G$ with
$T\le G \le \Aut(T)$, consider a chain
$$
G_v := X_1 < X_2 < \ldots < X_{m-1} < X_m := G
$$
such that each $X_i$, $i\in\{1,\ldots, m-1\}$, is a maximal subgroup of $X_{i+1}$
Let $k$ be the smallest index such that $T\le X_k$. Then, for each $i\in \{1,\ldots, k-1\}$,
 the action of $G$ by right multiplication on the cosets of $X_i$ in $G$ is faithful
 and in view of Lemma~\ref{lem:quofix}, we see that $g$ is a non-trivial permutation
 of $X_i\backslash G$ with $\fpr_{X_i\backslash G}(g) > 1/3$.
 This observation allows us to use the following naive algorithm 
 which finishes the proof of Theorem~\ref{thrm:1}.
 
Let $T$ be one of the groups satisfying a condition (a), (b) or (c) and let $G$ be such that $T\le G\le \Aut(G)$.
Initialise the procedure by letting $\mathcal{Y}:=\{G\}$.
 Now construct a set $\mathcal{Z}$ by going through all
the group $Y\in \mathcal{Y}$ and then through all the maximal subgroups $M$ of $Y$ (modulo conjugation in $Y$).
Put $M$ into $\mathcal{Z}$ if and only if either $T\le M$ or
there exists an element $g\in G$ with $o(g)\in\{2,3\}$ such that
$\fpr_{M\backslash G}(g) > 1/3$  (this can be checked by determining
the  set $M \cap g^G$ of elements in $M$ that are conjugate in $G$ to $g$,
dividing its size by $|g^G| = |G : \cent G g|$, and checking if ratio is larger than $1/3$).
In the latter case,
check it $M$ is isomorphic to a possible vertex-stabiliser of a 
connected $4$-valent $2$-arc-transitive graph in~\cite[Table~1]{Primoz},
and if it is,
check if any of the orbital graphs of $G$ acting on $M\backslash G$
is a connected $4$-valent graph with $G$ acting
$2$-arc-transitively on it. If there is such a graph, store it.
Finally, we repeat this procedure with 
$\mathcal{Z}$ in place of $\mathcal{Y}$, until the set $\mathcal{Y}$ becomes empty.

This computation might seem very time and memory consuming but for most groups $T$ the
procedure stops after the first few iteration. The resulting graphs
are: $\Psi_1,\Psi_2,\Psi_3,\Psi_4, \Psi_5$, arising from
$T=\Alt(5), \Alt(5)$, $\PSL(3,2)$, $\PSL(3,3)$ and  $\Alt(7)$, respectively.
This finishes the case $k=4$ and thus proves Theorem~\ref{thrm:1}.
\medskip

Let us now assume that $k=3$. By \eqref{eq:mT} we see that $m(T) \le \iota_*(T,2)\le 56$,
and if $G_v$ is not of type $G_5$, then we obtain that $m(T) \le \iota_*(T,2)\le 27$.
Similarly as in the case $k=4$, a computer assisted inspection of the groups in Table~\ref{table:1} 
yields that the only non-abelian simple groups $T$ satisfying $\iota_*(T,2)\le 26$ are 
$\Alt(5), \Alt(6),\Alt(7)$ and $\PSL(3,2)$. Since $|\Aut(T)|/6 \le 840$, we see that
all graphs arising from a $2$-arc-transitive action of $G$ have order at most $840$,
contradicting our assumption that $|\V\Gamma| > 10\,000$. We may thus assume that
$G_v$ is of type $G_5$, and thus that $|\Aut(T)|$ is divisible by $48$ and that
 $|\Aut(T)|/48 > 10\,000$. The only group from Table~\ref{table:1} satisfying these
 restrictions together with $\iota_*(T,2)\le 56$ is $\Alt(11)$. Using the algorithm described
 at the end of the case $k=4$ reveals that no graph satisfying Hypothesis~\ref{hyp:sAT} arises
 in this case. This completes the proof of Theorem~\ref{thrm:2}.

\nocite{*}

\thebibliography{30}


\bibitem{Bab}
L.\ Babai, On the order of uniprimitive permutation groups, 
{\em Ann.\ of Math.} {\bf 113} (1981), 553--568.

\bibitem{Babai1} L.~Babai, On the automorphism groups of strongly regular graphs I, \textit{ITCS'14—Proceedings of the 2014 Conference on Innovations in Theoretical Computer Science}, 359--368, ACM, New York, 2014.

\bibitem{Babai2}L.~Babai, On the automorphism groups of strongly regular graphs II, \textit{J. Algebra} \textbf{421} (2015), 560--578. 

\bibitem{Babai3}L.~Babai,
Graph Isomorphism in Quasipolynomial Time,
{\em arXiv:1512.03547v2}, \url{https://arxiv.org/abs/1512.03547}.

\bibitem{magma}W.~Bosma, J.~Cannon, C.~Playoust, The Magma algebra system. I. The user language, \textit{J. Symbolic Comput.} \textbf{24} (1997), 235--265. 


\bibitem{Tim1}T.~Burness, Fixed point ratios in actions of finite classical groups I, \textit{J.~Algebra} \textbf{309} (2007), 69--79.



\bibitem{Tim4}T.~Burness, Fixed point ratios in actions of finite classical groups IV, \textit{J.~Algebra} \textbf{314} (2007), 749--788.



\bibitem{bicayley} M.~Conder, Bi-Cayley graphs, \url{https://mast.queensu.ca/~wehlau/Herstmonceux/HerstTalks/Conder.pdf};
accessed online January 5th 2020.

\bibitem{ConVer} M.~Conder, G.~Verret,
Edge-transitive graphs of small order and the answer to a 1967 question by Folkman,
{\em Algebraic Combinatorics} {\bf 2} (2019), 1275--1284.

\bibitem{condercensus}M.~Conder, \url{https://www.math.auckland.ac.nz/~conder/symmcubic10000list.txt};
accessed online January 5th 2020.

\bibitem{conder} M. Conder, P.~Dobcs\'{a}nyi, Trivalent symmetric graphs on up to 768 vertices, \textit{J. Combin. Math. Combin. Comput. }\textbf{40} (2002), 41--63.

\bibitem{ConLor} M.\ Conder, P.\ Lorimer,
Automorphism groups of symmetric graphs of valency 3, {\em J.\ Combin.\ Theory Ser.\ B}
{\bf 47} (1989).

\bibitem{ATLAS}J. H. Conway, R. T. Curtis, S. P. Norton, R. A. Parker,  R. A. Wilson, Atlas of finite groups, Maximal subgroups and ordinary characters for simple groups,
              With computational assistance from J. G. Thackray, Oxford University Press, Eynsham, 1985.


\bibitem{djokovic} D.~Djokovi\'c, A class of finite group-amalgams, \textit{Proc. Amer. Math. Soc.} \textbf{80} (1980), 22-26.



\bibitem{11}A.~Gardiner, C.~E.~Praeger, A characterization of certain families of 4-valent symmetric graphs,
\textit{European J. Combin.} \textbf{15} (1994) 383--397.


\bibitem{GMPS}S.~Guest, J.~Morris, C.~E.~Praeger, P.~Spiga, On the maximum orders of elements of finite almost simple groups and primitive permutation groups, \textit{Trans. Amer. Math. Soc.} \textbf{367} (2015), no. 11, 7665--7694.

\bibitem{GurMag}R.~Guralnick, K.~Magaard, On the minimal degree of a primitive permutation group, \textit{J. Algebra} \textbf{207} (1998), 127--145.



\bibitem{JPW} R.\ Jajcay, P.\ Poto\v{c}nik, S.\ Wilson,
 The The Praeger-Xu Graphs: Cycle Structures, Maps and Semitransitive Orientations,
{\em Acta Mathematica Universitatis Comenianae} {\bf 88} (2019), 269--291.

\bibitem{KL}P.~Kleidman, M.~Liebeck, \textit{The subgroup structure of the finite classical groups}, London Mathematical Society Lecture Note Series \textbf{129}, Cambridge University Press, Cambridge, 1990.

\bibitem{La} R. Lawther, M. W. Liebeck, G. M. Seitz, Fixed point ratios in actions of finite exceptional
groups of Lie type, \textit{Pacific Journal of Mathematics} \textbf{205} (2002), 393--464.

\bibitem{LehPotSpi} F.\ Lehner, P.\ Poto\v{c}nik, P.\ Spiga, 
Bounding the number of vertices fixed by a non-trivial automorphism of an arc-transitive graph, in preparation.

\bibitem{LS}M. Liebeck, J. Saxl, Minimal degrees of primitive permutation groups, with an application to monodromy groups of covers of Riemann surfuces, \textit{Proc. London Math. Soc. (3)} \textbf{63} (1991), 266--314.

\bibitem{MNS00}    A.~Malni\v c, R.~Nedela, and M.~\v Skoviera,
                   Lifting graph automorphisms by voltage assignments, 
                   {\em European~J.~Combin.} {\bf 21} (2000), 927--947.



                   


\bibitem{Primoz}P. Poto\v{c}nik, A list of $4$-valent $2$-arc-transitive graphs and finite faithful amalgams of index $(4,2)$, \textit{Europ. J. Comb.}  \textbf{30} (2009), 1323--1336.


\bibitem{PotSpiO2}
P.~Poto\v{c}nik, P.~Spiga,
On minimal degree of transitive permutation groups with stabiliser being a $2$-group,
{\em arXiv:XXXXXXXXX}.

\bibitem{PSV}P.~Poto\v{c}nik, P.~Spiga, G.~Verret, Bounding the order of the vertex-stabiliser
in $3$-valent vertex-transitive and $4$-valent arc-transitive graphs, \textit{J. Comb. Theory Ser. B} \textbf{111} (2015), 148--180.


\bibitem{census1}P.~Poto\v{c}nik, P.~Spiga, G.~Verret, Cubic vertex-transitive graphs on up to 1280 vertices, \textit{J. Symbolic Comput. }\textbf{50} (2013), 465--477.

\bibitem{census2}P.~Poto\v{c}nik, P.~Spiga, G.~Verret, A census of 4-valent half-arc-transitive graphs
and arc-transitive digraphs of valence two, \textit{Ars Math. Contemp.} \textbf{8} (2015), 133--148.

\bibitem{census3}P.~Poto\v{c}nik, P.~Spiga, G.~Verret, Groups of order at most $6,000$ generated by two elements, one of which is an involution, and related structures, Symmetries in Graphs, Maps, and Polytopes: 5th SIGMAP Workshop, West Malvern, UK, July 2014, edited by Jozef \v{S}ir\'a\v{n} and Robert Jajcay, Springer Proceedings in Mathematics \& Statistics, 273-301.

\bibitem{PVdigraphs} P.\ Poto\v{c}nik, G.\ Verret,
      On the vertex-stabiliser in arc-transitive digraphs,
     {\em J.\ Combin.\ Theory Ser.\ B.} {\bf 100} (2010), 497--509.

\bibitem{PotVid}
 P.\ Poto\v{c}nik, J.\ Vidali,
 Girth-regular graphs,
{\em Ars Math.\ Contemp.} {\bf 17} (2019) 249--368.
     
\bibitem{Pwilson}P.~Poto\v{c}nik, S.~Wilson,  Tetravalent edge-transitive graphs of girth at most $4$,
       \textit{J. Combinatorial Theory Ser.~B.} {\bf 97} (2007), 217--236.

\bibitem{recipe} P.~Poto\v{c}nik, S.~Wilson,  
Recipes for Edge-Transitive Tetravalent Graphs,
\href{https://arxiv.org/abs/1608.04158}{arXiv:1608.04158}.


\bibitem{PraHiAT} C.~E.~Praeger, Highly Arc Transitive Digraphs, 
{\em Europ.\ J.\ Combin} {\bf 10} (1989),  281--292.

\bibitem{23}C.~E.~Praeger, M.~Y.~Xu, A characterization of a class of symmetric graphs of twice prime valency,
\textit{European J. Combin.} \textbf{10} (1989) 91--102.

\bibitem{11_1}C.~E.~Praeger, An O'Nan-Scott Theorem for finite quasiprimitive permutation groups and an
application to $2$-arc transitive graphs, \textit{J. Lond. Math. Soc. (2)} \textbf{47} (1993), 227--239.



\bibitem{AlePri} A.~Ramos Rivera, P.~\v{S}parl,
 New structural results on tetravalent half-arc-transitive graphs,
 {\em J.\ Combin.\ Theory Ser.\ B} {\bf 135} (2019), 256--278.
 

\bibitem{sims} C.~C.~Sims, Graphs and finite permutation groups, \textit{Math. Zeit.} \textbf{95} (1967), 76-86.

\bibitem{tutte} W.~T.~Tutte, On a family of cubical graphs, {\em Proc.\ Cambridge Philos.\ Soc} {\bf 43} (1947), 459--474.

\bibitem{33}A.~V.~Vasil'ev, V.~D.~Mazurov, Minimal permutation representations of finite simple orthogonal groups (Russian, with Russian summary), \textit{Algebra i Logika} \textbf{33} (1994), no. 6, 603--627; English transl., \textit{Algebra and Logic} \textbf{33} (1994), no. 6, 337--350.

\bibitem{Weiss}R.~Weiss, Presentation for $(G,s)$-transitive graphs of small valency, \textit{Math. Proc. Philos. Soc.} \textbf{101} (1987), 7--20.
\bibitem{wilsonArXiv}
R.~A.~Wilson, Maximal subgroups of sporadic groups, Finite simple groups: thirty years of the atlas and beyond, \textit{Contemp. Math.} \textbf{694}, Amer. Math. Soc., Providence, RI, 2017.

\bibitem{ATLAS}
R.~A.~Wilson,
P.\ Walsh, J.\ Tripp, I.\ Suleiman, R.\ Parker, S.\ Norton, S.\ Nickerson, S.\ Linton, 
J.\ Bray,  R.\ Abbott,
ATLAS of Finite Group Representations - Version 3,
\url{http://brauer.maths.qmul.ac.uk/Atlas/v3/} (accessed 25th March 2020).

\end{document}